\newtheorem{theorem}{Theorem}[section]
\newtheorem{remark}{Remark}
\newtheorem{lemma}[theorem]{Lemma}
\newcommand{\dx}{\Delta x}
\begin{document}

\title{Method of lines transpose:  Energy gradient flows using direct operator inversion for phase field models}
\author[M. Causley]{Matthew Causley}
\address{Mathematics Department, Kettering University, Flint, MI 48504}
\email{mcausley@kettering.edu}

\author[H. Cho]{Hana Cho}
\address{Department of Mathematics, Michigan State University, East Lansing, MI 48824}
\email{chohana@math.msu.edu}

\author[A. Christlieb]{Andrew Christlieb}
\address{Department of Mathematics, Michigan State University, East Lansing, MI 48824}
\email{andrewch@math.msu.edu}

%    \subjclass is required.
%\subjclass{Primary 65N12, 65N40, 35L05}

\date{}
%==============================================================================
%	Abstract
%==============================================================================
%    Abstract is required.
\begin{abstract}
In this work, we develop an $\mathcal{O}(N)$ implicit real space method in 1D and 2D for the Cahn Hilliard (CH) and vector Cahn Hilliard (VCH) equations, based on the Method Of Lines Transpose (MOL$^\text{T}$) formulation. This formulation results in a semi-discrete time stepping algorithm, which we prove is gradient stable in the $H^{-1}$ norm.

The spatial discretization follows from dimensional splitting, and an $\mathcal{O}(N)$ matrix-free solver, which applies fast convolution to the modified Helmholtz equation. We propose a novel factorization technique, in which fourth order spatial derivatives are incorporated into the solver. The splitting error is included in the nonlinear fixed point iteration, resulting in a high order, logically Cartesian (line-by-line) update. Our method is fast, but not restricted to periodic boundaries like the fast Fourier transform (FFT).

The basic solver is implemented using the Backward Euler formulation, and we extend this to both backward difference (BDF) stencils, implicit Runge Kutta (SDIRK) and spectral deferred correction (SDC) frameworks to achieve high orders of temporal accuracy. We demonstrate with numerical results that the CH, and VCH equations maintain gradient stability in one and two spatial dimensions. We also explore time-adaptivity, so that meta-stable states and ripening events can be simulated both quickly and efficiently.

\bigskip
\noindent {\em Keywords}: Method of Lines Transpose, Rothe's method, Implicit Methods, Boundary Integral Methods, Alternating Direction Implicit Methods, ADI schemes, Unconditionally gradient schemes, Cahn-Hilliard, Functionalized Cahn-Hilliard.

%%\noindent {\em AMS Classifications}: 65N12, 65N40, 35L05
\end{abstract}

\maketitle
%____________________________________%

%==============================================================================
%	Introduction
%==============================================================================
%____________________________________%
\section{Introduction}

%%%%%%%% background (Microstructure evolution => Phase-field method)
Many materials such as metals, ceramics, and polymers have physical properties which depend strongly on their microstructure. The morphology evolves during various physical processes, such as solidification, solid-state phase transformation, grain coarsening and grain growth. These processes are governed complex and nonlinear chemical reactions, and so the location and motion of phase interfaces typically evolve over long time scales. Consequently the phase-field method has become a consolidated tool for simulating microstructure evolution. See \cite{chen2002phase} for a survey on the subject.

%%%%%%%% background (Allen-Cahn and Cahn-Hilliard equation)
One of the most well-known phase field models is the Cahn-Hilliard (CH) equation. Cahn and Hilliard \cite{cahn1958free} introduced it to explain the phase separation by predicting the interfacial free energy between two coexisting phases. The free energy is modeled using an order parameter $u$, and a reaction term $f(u)$. The reaction function is defined as the derivative of a double-well potential with two minima, $u = \pm 1$, which represent each phase of the composite material. The phases are separated by {\em transition layers}, which are of width $\mathcal{O}(\epsilon)$. The CH equation has been used extensively to model phenomena in material science; and several generalizations have been developed, including the vector Cahn-Hilliard \eqref{eqn:CH_vector} and functionalized Cahn-Hilliard equations.

%%%%%%%% background (Numerical Schemes)
%Due to the importance of such models in material science, there haven been extensive numerical algorithm of those equations and each algorithm has contributed for this topic.
In this paper, we will develop numerical schemes for phase field models based on the Method of Lines Transpose (MOL$^T$) framework. The MOL$^{\text{T}}$ formulation, which is a practical approach to Rothe's method \cite{rothe1930zweidimensionale}, starts by discretizing a PDE in time, and then developing efficient approaches to solve the resulting set of coupled boundary value problems. The traditional formulation, proposed by Rothe \cite{rothe1930zweidimensionale}, resulted in large matrices that were cumbersome; hence the method did not get a great deal of attention. In the late 2000s Rothe's method received more attention, due to the development of spectral deferred correction methods and Krylov subspace techniques \cite{jia2008krylov}. Soon, other fast summation techniques such as the fast multipole method \cite{kropinski2011fast} were employed based on potential theory and boundary integral techniques. 

%%%%%%%% our work (MOLT)
Even more recently, MOL$^{\text{T}}$ algorithms for PDEs have been developed in the context of wave propagation \cite{causley2014method,causley2014higher,causley2015method}, in which the resulting Helmholtz boundary value problem is solved with a dimensionally split algorithm, similar to alternating direction implicit (ADI) methods \cite{douglas1955numerical,fairweather1967new}. Here the spatial operator is inverted one dimension at a time through fast convolution, in a logically Cartesian fashion. In \cite{causley2014higher}, successive convolution was introduced, leading to an A-stable method of arbitrary order in time for hyperbolic problems. This led the current authors to revisit this idea in context of linear and nonlinear (second order) parabolic problems \cite{causley2016method}, and successive convolution was inserted into the resolvent expansion of pseudo differential operators to achieve L-stable real space solvers which scale as $\mathcal{O}(N)$ for $N$ spatial points.

%The resulting dimensionally split MOL$^T$ method has been demonstrated on a range of problems with complex geometry and boundary conditions.  arbitrary wave speed. In \cite{causley2015method}, we extended to general boundary conditions for wave problems, including outflow, arbitrary geometry for Dirichlet and Neumann boundary conditions, as well as presented the fast $\mathcal{O}(N)$ convolution that makes the scheme practical. This form of the method is A-stable, second-order, and $\mathcal{O}(N)$ with total flops about $1.5$ of the explicit Yee scheme, but without CFL restriction. We developed the method to problems in plasma physics, by coupling the implicit form of the Maxwell solver with particle models for plasma. In our most recent work \cite{causley2016method}, we extended our MOL$^{\text{T}}$ with the ADI form to high order in space and time for linear and non-linear second-order parabolic problems. 

%%%%%%%% our work (MOLT)
Here we turn our attention to parabolic problems of higher order, specifically the CH equation and its extensions. To this end, we highlight the relevant new features of our approach:
\begin{enumerate}[label=\roman*]
\item We combine our previous work on the MOL$^{\text{T}}$ formulation \cite{causley2014method,causley2014higher,causley2015method,causley2016method} with traditional implicit time stepping methods (BDF, SDIRK and IDC methods, with time adaptivity);
\item we directly invert the linear part of the differential operator using the Green's function, combined with fast $\mathcal{O}(N)$ convolution;
\item we use dimensional splitting to avoid boundary integrals, and the need for global inversion;
\item we directly include the splitting error in our formulation by explicitly incorporating it into the nonlinear iterations;
\item we remain competitive with explicit methods in terms of wall clock time needed to solve each fixed point iteration.
\end{enumerate}
It is worth pointing out that the MOL$^{\text{T}}$ formulation is really targeted at developing new paradigms for parallel multicore computing. The point to this paper, as well as our previous work, is to establish the theoretical and practical underpinnings necessary to address a wide class of PDEs. In particular, our task is to investigate the implementation for higher order PDEs. Parallel implementation  will be the focus of our upcoming work in this area.

%%%%%%%% outline of the paper
The rest of this organized as follows. In Section \ref{sec:models}, we present several models which are of interest in this work. In Section \ref{sec:molt}, we derive a first order scheme for CH equation in a basic 1D setting. 

In Section \ref{sec:higher_order}, we modify the traditional time stepping scheme,  to achieve higher orders of accuracy and present temporal refinement studies.  In addition, we extend 1D schemes to multiple spatial dimension in Section \ref{sec:multiD}. Finally, we describe an adaptive time stepping strategy in Section \ref{sec:adaptiveTime} and present numerous numerical results including 1D, 2D solutions to the CH equations and the vector CH model in Section \ref{sec:numerical}, followed by some concluding remarks.

%==============================================================================
%	Section 2: Models
%==============================================================================
%____________________________________%
\section{Models}
\label{sec:models}

%%%%%%%% AC and CH equations
In this paper, we present the proposed Method of lines transpose (MOL$^{\text{T}}$) scheme for Cahn-Hilliard (CH) equation
\begin{equation}  \label{eqn:CahnHilliard} 
	u_t= -\Delta \left[ \epsilon^2 \Delta u - f(u) \right], \quad {\bf x} \in \Omega \subset \mathbb{R}^d,
\end{equation}
with the appropriate initial and boundary conditions. The phase function $u \in H^1(\Omega)$  describes the volume fraction of one component of a binary mixture where $H^1(\Omega)$ is the standard Sobolev space. The reaction function $f(u)$ is the derivative of classical Ginzburg-Landau double-well potential $F(u) =  \frac{1}{4}(u^2-1)^2$ whose local minima is at $u = \pm 1$. The small parameter $0 < \epsilon \ll 1$ is the width of the interfacial transition layer. In a 1D setting ($\Omega \equiv [a,b] \subset \mathbb{R}$), the Laplacian $\Delta$ in the above equations is replaced by $\partial_{xx}$.

%%%%%%%% Energy functional 
The CH equation \eqref{eqn:CahnHilliard} describes the $H^{-1}$ gradient flow of the CH free energy \cite{cahn1958free}
\begin{equation}  \label{eqn:EnergyFunctional}
	\mathcal{E}(u) =\int_{\Omega} \frac{\epsilon^2}{2} |\nabla u|^2 + F(u) d{\bf x}.
\end{equation}
%In 1D, $ |\nabla u|^2$ is replaced by $u_x^2$. 
If we assume zero-flux boundary conditions, the CH free energy is dissipative
\begin{equation} \notag
\frac{d}{dt} \mathcal{E}(u) = \left<u_t, \frac{\delta \mathcal{E}}{\delta u} \right>_{L^2} \leq 0. 
\end{equation}
%\todo{What does the subscript $L$ refer to?}
We emphasize that the physical property of energy stability is vital, and any consistent numerical scheme for the CH model must also possess this property.
%Due to such important feature of energy law, it is crucial to design an energy stable numerical scheme for CH model. In this context, many works have focused on energy stable numerical schemes.
Many such schemes have been proposed, such as the operator splitting approach introduced by Eyre \cite{eyre1998unconditionally}, the semi-implicit spectral deferred correction method by Shen, \cite{shen2010, shen2012}, and even fully implicit schemes utilizing nonlinear solvers, such as the conjugate gradient method \cite{Jaylan}. We shall use a linearly implicit fixed point method to solve the CH equation, which is stabilized by shifting the phase field about the background state $u=-1$.

We also consider vector version of CH (VCH) equation \cite{Jaylan}.  For ${\bf u} = (u_1,u_2)$,
\begin{equation} \label{eqn:CH_vector}
{\bf u}_t = -\Delta \left[ \epsilon^2 \Delta {\bf u} - \nabla_{\bf u} W({\bf u}) \right]
\end{equation}
where the reaction term is the derivative of the potential function,
\begin{equation} \label{eqn:CH_vector_reaction}
W({\bf u}) = \Pi_{i=1}^{3} | {\bf u} - {\bf z}_i |^2, \qquad {\bf z}_i: \text{cube roots of unity in the } (u_1,u_2) \text{ plane}.
\end{equation}
The potential $W$ is non-negative and its minimum values are attained at three vectors $\{ {\bf z}_i \}$, so as to model a three-phase physical system. This model can be seen as the higher order volume preserving version of the vector-valued Ginzber-Landau equation  \cite{bronsard1993three} , which suggests a model for three phase boundary motion, such as the grain-boundary motion in alloys. The energy functional is a vector version of \eqref{eqn:EnergyFunctional} such that 
\begin{equation}  \label{eqn:EnergyFunctional_vector}
\mathcal{E}_{VCH}({\bf u}) =\int_{\Omega} \frac{\epsilon^2}{2} |\nabla {\bf u}|^2 + W({\bf u}) d{\bf x}.
\end{equation}

%==============================================================================
%	Section3: First-order scheme in 1D
%==============================================================================
%____________________________________%
\section{First order scheme in one spatial dimension}
\label{sec:molt}
%In our previous work \cite{causley2016method}, we developed MOL$^{\text{T}}$ scheme for Heat equation and AC equation. We will revisit this previous work when necessary. 
Before employing a time discretization to CH equation \eqref{eqn:CahnHilliard}, we first introduce a new transformed variable $v = u+1$. Since pure states $u=\pm 1$ dominates the solution during ripening, we see that equivalently $v=0$, 2. This transformation is partially motivated by \cite{Zhengfu}, in which the same approach is combined with operator splitting to produce a contractive operator. Similarly, in \cite{Jaylan} the linearized variable $v\approx u+1$ is used as a preconditioner for the full problem. Our work is distinct, however in that we are not solving the linearized equation, but instead the fully nonlinear equation
\begin{equation} \label{eqn:CahnHilliard_v} 
v_t= -\epsilon^2 v_{xxxx} + f(v-1)_{xx}, \quad f(v-1) = v^3 -3v^2+2v,
\end{equation}
which follows from the transformation $v=u+1$ inserted into the CH equation \eqref{eqn:CahnHilliard}, with $\Omega = (a,b)$. Similarly, the energy functional \eqref{eqn:EnergyFunctional} becomes 
\begin{equation} \label{eqn:EnergyFunctional_v} 
\mathcal{E}(v) =\int_a^b \frac{\epsilon^2}{2} |v_x|^2 + F(v-1) d{\bf x}, \quad F(v-1) = \frac{1}{4}v^4 - v^3 + v^2
\end{equation}
%For simplicity, we will assume periodic boundary conditions, but other boundary conditions can be implemented in our MOL$^{\text{T}}$ framework in \cite{causley2016method}. 
In Section \ref{sec:molt_CH}, we will formulate our first-order MOL$^{\text{T}}$ scheme using above equation \eqref{eqn:CahnHilliard_v}, and will show that this transformation makes our scheme gradient stable in Section \ref{sec:molt_CH_energy}.

%==============================================================================
%	3.1 Semi-discrete solution
%==============================================================================
%---------------------------------------------------------------------------------------------------------------------------------------------------
\subsection{Semi-discrete scheme for 1D CH equation} \label{sec:molt_CH} 
We utilize the MOL$^T$ by discretizing  \eqref{eqn:CahnHilliard_v} in time as the Backward Euler (BE) scheme, 
\begin{equation}  \label{eqn:CH_BE}
\dfrac{v^{n+1}-v^n}{\Delta t} = -\epsilon^2 \partial_{xxxx} v^{n+1} + 2\partial_{xx} v^{n+1} + \partial_{xx} \tilde{f}^{n+1},  \quad \tilde{f}(v) = v^3 -3v^2,
\end{equation}
where $v^n = u(x,t^n)+1$ and $\Delta t = t^{n+1} - t^n$. This scheme \eqref{eqn:CH_BE} is first-order in time but still continuous in space. We note that the equation contains both linear and nonlinear implicit terms, and so an efficient iterative scheme is required to construct a fully implicit solution. We will suggest two nonlinear iteration schemes to solve the solution $v^{n+1}$ in Section \ref{sec:fixed_pt_iteration}. We first prove that the semi-discrete solution of \eqref{eqn:CH_BE} is unconditionally gradient stable. 

%==============================================================================
%	3.2.1 Energy stability
%==============================================================================
\subsubsection{Energy stability}\label{sec:molt_CH_energy}
Our proof is similar to those which have appeared elsewhere \cite{shen2010,eyre1998unconditionally}, in that we utilize the $H^{-1}$, under reasonable assumptions. However, we investigate the fully implicit scheme \eqref{eqn:CH_BE}, which alters the linear part of the PDE. We first make useful observations.
\begin{itemize}
	\item The operator $\Delta^{-1} : \left \{ v \in L^2 | \int_\Omega v dx = 0\right \} \rightarrow H^2(\Omega)$ is defined \cite{Zhengfu} as 
\begin{equation}  \label{item:lemma1}
\Delta^{-1}v = h \quad \Longleftrightarrow \quad <v, q> = <\Delta h, q>, \quad \forall q \in L^2(\Omega).
\end{equation}

	\item  For any $p, q \in L^2(\Omega)$, the following identity holds 
\begin{equation}  \label{item:lemma2}
<p, p - q> = \frac{1}{2} \left( \| p \|_0^2 -  \| q \|_0^2 +  \| p-q \|_0^2 \right) \geq \frac{1}{2} \left( \| p \|_0^2 -  \| q \|_0^2 \right),
\end{equation}
where $< \cdot, \cdot>$ is standard $L^2$ inner product and $\| \cdot \|_0$ is the $L^2$ norm.
\end{itemize}
We will consider the first-order scheme  \eqref{eqn:CH_BE}  in general spatial dimension $(\Omega \in \mathbb{R}^d, d \in \mathbb{N})$.

\begin{lemma}
Under the following assumption, 
$$ 0 \leq v \leq 2, \quad  v = u+1$$
the fully-implicit scheme \eqref{eqn:CH_BE} satisfies the discrete energy law for any time step $\Delta t$: 
$$ \mathcal{E} (v^{n+1}) \leq \mathcal{E} (v^{n}), \quad \forall n \geq 0.$$
where $v^n$ is approximation of $v(x,t^n) \equiv u(x,t^n)+1$ of Cahn-Hilliard equation \eqref{eqn:CahnHilliard}.
\end{lemma}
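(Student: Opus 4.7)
The plan is to multiply the semi-discrete equation by the discrete chemical potential $\mu^{n+1} := -\epsilon^2 \Delta v^{n+1} + 2v^{n+1} + \tilde{f}(v^{n+1})$, which is exactly $\delta \mathcal{E}/\delta v$ evaluated at $v^{n+1}$ since $2v + \tilde{f}(v) = F'(v-1)$. Rewriting \eqref{eqn:CH_BE} as $(v^{n+1}-v^n)/\Delta t = \Delta \mu^{n+1}$ and pairing against $\mu^{n+1}$ in $L^2$ produces the discrete $H^{-1}$ dissipation identity
\[
\langle \mu^{n+1}, v^{n+1} - v^n \rangle = \Delta t\, \langle \mu^{n+1}, \Delta \mu^{n+1} \rangle = -\Delta t\, \|\nabla \mu^{n+1}\|_0^2 \leq 0,
\]
which is the discrete analogue of $\tfrac{d}{dt}\mathcal{E} = -\|\nabla \mu\|_0^2$ and drives the whole argument.

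I would then expand the left-hand side and match the resulting pieces against $\mathcal{E}(v^{n+1})-\mathcal{E}(v^n)$. For the Laplacian term and the linear stabilization $2v^{n+1}$, identity \eqref{item:lemma2} applied with $p=\nabla v^{n+1}$, $q=\nabla v^n$ and with $p=v^{n+1}$, $q=v^n$ produces the telescoping contributions $\tfrac{\epsilon^2}{2}(\|\nabla v^{n+1}\|_0^2-\|\nabla v^n\|_0^2)$ and $\|v^{n+1}\|_0^2-\|v^n\|_0^2$ together with the strictly non-negative remainders $\tfrac{\epsilon^2}{2}\|\nabla(v^{n+1}-v^n)\|_0^2$ and $\|v^{n+1}-v^n\|_0^2$. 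Writing the shifted potential as $F(v-1) = G(v) + H(v)$ with $H(v)=v^2$ and $G(v)=\tfrac14 v^4 - v^3$, the quadratic telescoping piece is exactly $\int [H(v^{n+1})-H(v^n)]\,dx$. For the remaining nonlinear piece I would Taylor-expand $G$ about $v^{n+1}$, noting $G'=\tilde{f}$, to obtain
\[
\int \tilde{f}(v^{n+1})(v^{n+1}-v^n)\,dx = \int [G(v^{n+1})-G(v^n)]\,dx + \tfrac{1}{2}\int G''(\xi)(v^{n+1}-v^n)^2\,dx,
\]
with $\xi$ taken pointwise between $v^n$ and $v^{n+1}$. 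The hypothesis $0\leq v \leq 2$ is used precisely here: on this interval $G''(\xi)=3\xi(\xi-2)\leq 0$, which fixes the sign of the Taylor remainder.

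Collecting everything rearranges the dissipation identity into
\[
\mathcal{E}(v^{n+1}) - \mathcal{E}(v^n) + \mathcal{R}(v^{n+1},v^n) = -\Delta t\,\|\nabla \mu^{n+1}\|_0^2 \leq 0,
\]
where $\mathcal{R}$ collects the three remainders $\tfrac{\epsilon^2}{2}\|\nabla(v^{n+1}-v^n)\|_0^2+\|v^{n+1}-v^n\|_0^2+\tfrac{1}{2}\int G''(\xi)(v^{n+1}-v^n)^2\,dx$. The main obstacle is verifying $\mathcal{R}\geq 0$: the extra $\|v^{n+1}-v^n\|_0^2$ generated by identity \eqref{item:lemma2} applied to the linear stabilization $2v^{n+1}$ has to absorb the ``wrong-sign'' Taylor remainder coming from the concave part $G$ of the potential (one checks that the two combine into $\tfrac12\int F''(\xi-1)(v^{n+1}-v^n)^2\,dx$, which is where the shift $v=u+1$ is doing its work by loading all the non-convexity of the double well onto $G$ with sign-controlled $G''$ on the physical range $[0,2]$). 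This absorption is what distinguishes the present fully implicit argument from the classical Eyre/Shen convex--concave splitting proofs referenced in the text and is the crux of why no CFL-type restriction on $\Delta t$ is needed.
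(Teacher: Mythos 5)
Your proposal follows the paper's own argument essentially step for step: pairing the update with the discrete chemical potential $\mu^{n+1}$ is the same move as the paper's choice of test function $\phi=\Delta^{-1}(v^{n+1}-v^n)$, since $\langle \Delta\mu^{n+1},\Delta^{-1}\delta v\rangle=\langle\mu^{n+1},\delta v\rangle$ with $\delta v=v^{n+1}-v^n$ and both yield the non-positive $H^{-1}$ dissipation term; the telescoping of the gradient and linear terms is identity \eqref{item:lemma2} exactly as the paper applies it; and your Taylor expansion of $G=\tilde F$ with $G''=\tilde f'$ is the paper's expansion of $\tilde F^{n+1}-\tilde F^n$ with Lagrange remainder at $\xi$. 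The only cosmetic difference is that you retain the $\tfrac{\epsilon^2}{2}\|\nabla\delta v\|_0^2$ remainder that the paper discards.

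The genuine gap sits at the step you yourself flag as the crux, namely $\mathcal R\geq 0$. Your algebra combining the two zeroth-order remainders into $\tfrac12\int F''(\xi-1)\,\delta v^2\,dx$ is correct, because $F''(v-1)=3v^2-6v+2=\tilde f'(v)+2$. But $F''(\xi-1)=3(\xi-1)^2-1$ is \emph{not} non-negative on the hypothesis range: it is strictly negative for $\xi\in\left(1-\tfrac{1}{\sqrt3},\,1+\tfrac{1}{\sqrt3}\right)$, i.e., exactly where the solution sits inside a transition layer between the wells, so the claimed pointwise absorption fails there. The extra $\tfrac{\epsilon^2}{2}\|\nabla\delta v\|_0^2$ cannot rescue it in the regime of interest, since Poincar\'e on the zero-mean increment only contributes $\tfrac{\epsilon^2\lambda_1}{2}\|\delta v\|_0^2$ with $0<\epsilon\ll1$. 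What the absorption actually requires is $\sup_{[0,2]}|\tilde f'|\le 2$, whereas $\sup_{[0,2]}|\tilde f'|=|\tilde f'(1)|=3$; in the language of the stabilized schemes cited in the text, the shift supplies $S=1$ while $\tfrac{1}{2}\max|\tilde f'|=\tfrac32$ is needed. To be fair, this is precisely the point at which the paper's own final inequality $\left(\tfrac{\|\tilde f'\|_\infty}{2}-1\right)\|\delta v\|_0^2\le 0$ is asserted rather than established, so you have faithfully reconstructed the argument, soft spot included; but as written your verification of $\mathcal R\ge0$ does not go through, and closing it requires an additional ingredient (a larger linear shift, a restriction keeping $\xi$ outside the spinodal interval, or a genuinely different treatment of the concave part).
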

\begin{proof} 
The solution $v^{n+1}$ satisfies the following weak formulation in $H^2(\Omega)$,
\begin{equation} \label{eqn:weakform}
\frac{1}{\Delta t} <v^{n+1}-v^n,\phi> = -\epsilon^2 <\Delta v^{n+1}, \Delta \phi>+2<v^{n+1},\Delta \phi>+<\tilde{f}^{n+1},\Delta \phi>,
\end{equation}
for all $\phi \in H^2(\Omega)$. Utilizing \eqref{item:lemma1}, we choose the test function $\phi = \Delta^{-1} (v^{n+1} - v^{n})$, so that
\begin{align} \notag
-\frac{1}{\Delta t} \| \nabla \cdot \Delta^{-1} (v^{n+1} - v^{n}) \|_0^2= \epsilon^2 &<\nabla v^{n+1}, \nabla (v^{n+1}- v^{n})> \\ \label{eqn:weakform2}
&+2<v^{n+1},v^{n+1} - v^n>+<\tilde{f}^{n+1}, v^{n+1} - v^n>, 
\end{align}
where integration by parts is employed. Next we apply the identity \eqref{item:lemma2}, 
\begin{equation} \label{weakform_inequal}
0 \geq \hspace{2mm} \frac{\epsilon^2}{2} ( \| \nabla v^{n+1} \|_0^2 -  \| \nabla v^n \|_0^2 ) + ( \| v^{n+1} \|_0^2 -  \| v^n \|_0^2 +  \| v^{n+1} -v^n \|_0^2 ) +<\tilde{f}^{n+1}, v^{n+1} - v^n>.
\end{equation}
We replace the last term with its Taylor expansion
$$ <\tilde{F}^{n+1} - \tilde{F}^n, 1> = <\tilde{f}^{n+1}, v^{n+1} - v^n> - <\frac{\tilde{f}'(\xi^{n+1})}{2}(v^{n+1} - v^n), v^{n+1} - v^n>,$$
so that the inequality \eqref{weakform_inequal} becomes
\begin{align} \notag
\mathcal{E}(v^{n+1}) - \mathcal{E}(v^n) &= \frac{\epsilon^2}{2} ( \| \nabla v^{n+1} \|_0^2 -  \| \nabla v^n \|_0^2 ) + ( \| v^{n+1} \|_0^2 -  \| v^n \|_0^2 ) + <\tilde{F}^{n+1} - \tilde{F}^n, 1> \\ \notag
							& \leq \left(\frac{\|\tilde{f}'\|_\infty}{2}-1\right)  \| v^{n+1} -v^n \|_0^2 \leq 0,
\end{align}
where $\|\cdot\|_\infty$ is $L^{\infty}$ norm. The last inequality holds because $\tilde{f}' = 3v^2 - 6v \leq 0$ for $0\leq v\leq 2$. 
Therefore the discrete energy is non-increasing, and the implicit scheme \eqref{eqn:CH_BE} is unconditionally gradient stable. 
\end{proof}

\begin{remark}  
In \cite{shen2010, shen2012}, Shen et.al. introduced a stabilizing term $S(u^{n+1} - u^n)$ for their semi-implicit scheme for the AC and CH equations. They choose the constant to satisfy $S\geq \dfrac{\max_{u \in \mathbb{R}} | \tilde{f}'(u) |}{2}$ with the truncated potential $-1\leq u \leq 1$. Equivalently, we assume that $0\leq v\leq 2$. However, we alter the linear part of the differential operator to ensure stability, rather than introducing an additional term.The distinction is that the transformation we employ modifies the linear part of the differential operator, rather than introducing an additional term, for stabilization.
\end{remark}

%We proved the weak formulation of our first-order scheme. In practice, we solve CH equation using a strong formulation.
%Even though the proof we gave it here is only illustrative, all numerical simulations we are using allows to take large time step size while maintaining gradient stability.
%\begin{remark}
%In addition, this proof can be extended to multi dimensional setting by replacing $\partial_{xx}$ to Laplacian $\Delta$. We will also present higher order schemes in Section \ref{sec:higher_order}. Even though we cannot show theoretically, all numerical solutions of higher order schemes also show gradient stable with large time step size, when we use this background state variable $v$.
%\end{remark}

%==============================================================================
%	3.2.2 Nonlinear iterative schemes
%==============================================================================
\subsubsection{Factorization of the linear operator} \label{sec:fixed_pt_iteration} 
We will now propose a simple fixed point iterative solver for equation \eqref{eqn:CH_BE},
\begin{equation}  \label{eqn:CH_BE_fixed}
\left( I - (2\Delta t \partial_{xx} - \epsilon^2 \Delta t  \partial_{xxxx})\right) v^{n+1,k+1} = v^{n} + \Delta t \partial_{xx} \tilde{f}^{n+1,k},
\end{equation}
where $k$ indicates the iteration index. By lagging the the nonlinear term $\tilde{f}^{n+1,k}$, we can make the update explicit through formal \textit{analytic} inversion of the fourth-order operator
\begin{equation}  \label{eqn:CH_BE_fixed2}
v^{n+1,k+1} = \left( I - 2\Delta t \partial_{xx} + \epsilon^2 \Delta t  \partial_{xxxx}\right)^{-1} \left[ v^{n} + \Delta t \partial_{xx} \tilde{f}^{n+1,k}\right].
\end{equation}
However, the Green's function of this fourth order operator contain both decaying and oscillatory components, which would not produce an efficient method. Instead we propose a novel factorization technique, which produces two second order decaying (modified Helmholtz) differential operators, and in doing so leverage the fast computation methods previously employed in \cite{causley2016method}. After some algebra, we rewrite \eqref{eqn:CH_BE_fixed2} as
\begin{equation}  \label{eqn:CH_BE_fixed2_factor}
v^{n+1,k+1} = \left( I -  \frac{\partial_{xx}}{\alpha_1^2} \right)^{-1} \left( I -  \frac{\partial_{xx}}{\alpha_2^2} \right)^{-1}\left[ v^{n} + \Delta t \partial_{xx} \tilde{f}^{n+1,k}\right], 
\end{equation}
where
\[
	\dfrac{1}{\alpha_1^2} = \Delta t + \sqrt{\Delta t^2 - \epsilon^2 \Delta t}, \qquad \dfrac{1}{\alpha_2^2} = \Delta t - \sqrt{\Delta t^2 - \epsilon^2 \Delta t}, 
\]
so that $ \alpha_1, \alpha_2 \in \mathbb{R}$ when $\Delta t \geq \epsilon^2$. This implies convolution with the Green's functions $e^{-\alpha_1|x|}$ and $e^{-\alpha_2|x|}$. For $\Delta t < \epsilon^2$, these parameters become complex, and the Green's functions are oscillatory. To overcome this difficulty, we switch to an alternate factorization to solve $v^{n+1}$: 
\begin{align}  \label{eqn:CH_BE_fixed_complete}
 v^{n+1,k+1} = \left( I - \sqrt{\epsilon^2 \Delta t}  \partial_{xx}\right)^{-2}  \left[   v^{n} + \Delta t \partial_{xx} \left( \tilde{f}^{n+1,k} - 2\left(1+\sqrt{\frac{\epsilon^2}{\Delta t}}\right)v^{n+1,k} \right)   \right], 
\end{align}
which is based on completing the square, and lagging linear terms which are incorporated into $\tilde{f}$. In this case, we convolve twice with the Green's function $e^{-\alpha|x|}$, where $\alpha = \frac{1}{\epsilon\sqrt{\Delta t}}$. This switching is summarized by the bifurcation diagram shown in Figure \ref{fig:bifurcation}.

%============================================================================== Figure 3.1 
\begin{figure}[h]
\centering
   \includegraphics[width=0.55\linewidth]{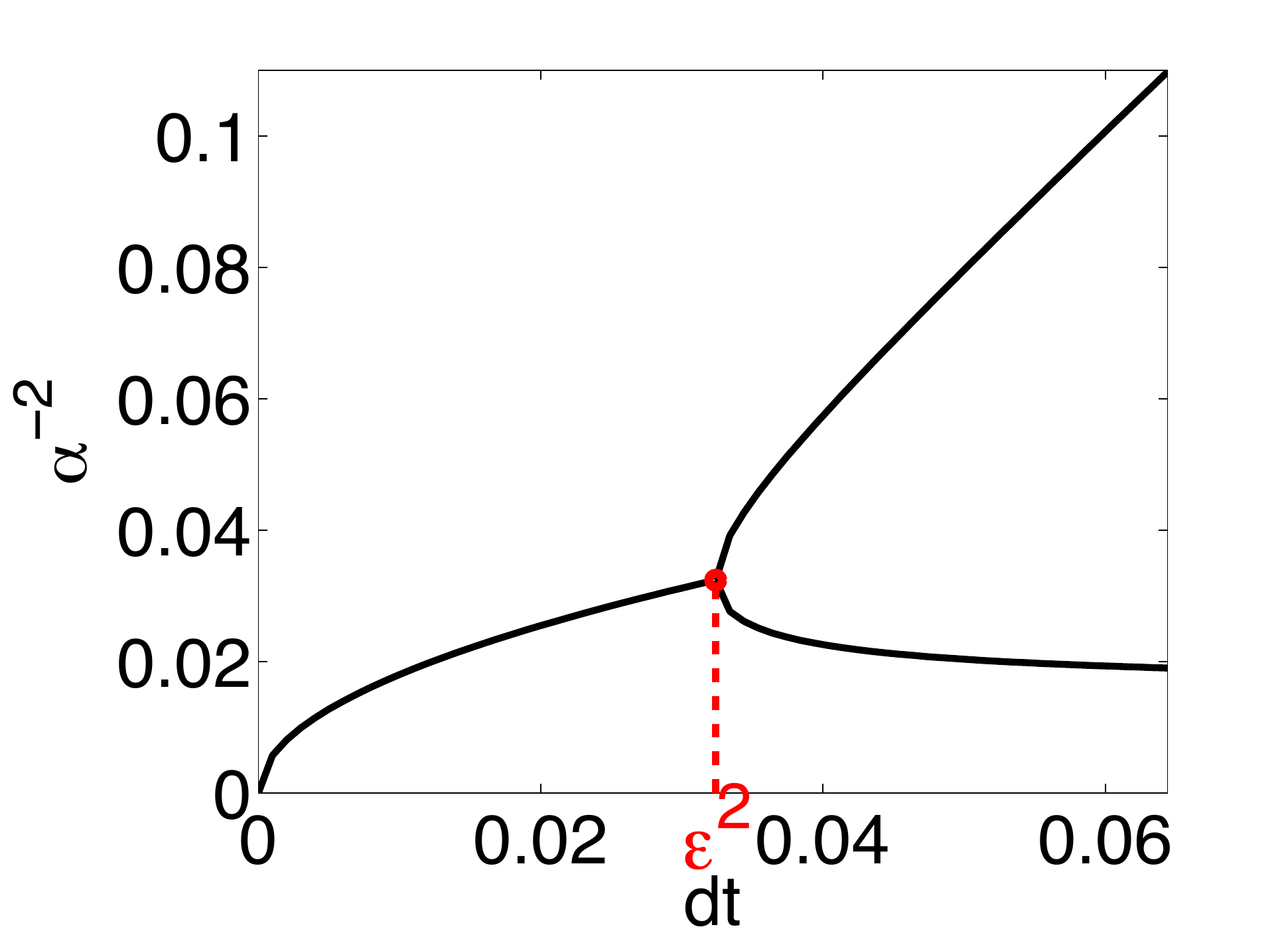}
    \caption{Bifurcation of the parameter $\dfrac{1}{\alpha^2}$ in the modified Helmholtz operator $\mathcal{L}$. }
    \label{fig:bifurcation}
\end{figure}
%We should point out that even though the first iteration \eqref{eqn:CH_BE_fixed2_factor} has a time step size restriction to have a real-valued Green's function, we gave a simple proof that this scheme is gradient stable in semi-analytic setting in Section \ref{sec:molt_CH_energy}. Moreover, we will present numerical evidence that the scheme converges even with very large $\Delta t$, which is of paramount importance for numerical simulation of CH type equations. 

Since both equations \eqref{eqn:CH_BE_fixed2_factor} and \eqref{eqn:CH_BE_fixed_complete} converge to \eqref{eqn:CH_BE} as the iterations $k\to \infty$, the overall scheme will be gradient stable. Thus, we invert the operator based on the parameters $\Delta t$ and $\epsilon$ as follows:
\begin{align} \notag
	\text{Factorization method} (\Delta t \geq  \epsilon^2): \quad &\text{Invert } \left(I - \frac{\partial_{xx}}{\alpha_1^2} \right) \left(I - \frac{\partial_{xx}}{\alpha_2^2} \right) \\ \notag
	\text{Completed square method } (\Delta t <  \epsilon^2):  \quad &\text{Invert } \left(I - \frac{\partial_{xx}}{\alpha^2} \right)^2.
\end{align}
%We should point out that the main reason we choose to change the operator is because the solution for $v$ is between $0$ and $2$ and a non-oscillatory kernel helps ensure that the integral solution we compute is also non-oscillatory.	

\begin{remark} \label{remark:twoiterations}
The second iterative scheme \eqref{eqn:CH_BE_fixed_complete} can be equivalently viewed as introducing a term of the form $S\partial_{xx}(v^{n+1,k+1} - v^{n+1,k})$ into the first \eqref{eqn:CH_BE_fixed2_factor}, with $S = 2\sqrt{\epsilon^2 \Delta t}$. Numerical results confirm that both regimes, namely $\Delta t<\epsilon^2$ and $\Delta>\epsilon^2$, exhibit gradient stability.
%Since $v$ is already transformed by the steady state, $u = -1$, this additional term could lower the stabilizing effect. Indeed, we have checked that this second iteration guarantees a decrease in energy when $\Delta t \leq \epsilon^p$ where $ p \approx 1.5$, so that we can safely transfer the iteration with our criteria $\Delta t \leq \epsilon^2$.
\end{remark}

Both iteration schemes require two inversions of a modified Helmholtz operator in \cite{causley2016method}, hence
\begin{equation}\label{eqn:L_Inverse}
 \mathcal{L} = I - \frac{\partial_{xx}}{\alpha^2}, 
\end{equation}
with the different values of $\alpha$. As we have indicated above, formal inversion of this operator \cite{causley2016method} yields:
\begin{equation}
\notag
\mathcal{L}^{-1}[v](x) = \underbrace{\mathcal{I}[v](x)}_{\text{Particular Solution}}+ \underbrace{\vphantom{I_x[u](x)} B_a e^{-\alpha(x-a)} + B_b e^{-\alpha(b-x)}}_{\text{Homogeneous Solution}}, \quad a\leq x \leq b,
\end{equation}
where the particular solution is a convolution with the Green's function,
\begin{equation}\label{eqn:Iu}
 \mathcal{I}[v](x) = \frac{\alpha}{2} \int_a^b e^{-\alpha |x-x'|} v(x') dx', 
\end{equation}
and the coefficients $B_a$ and $B_b$ are determined by applying boundary conditions at $x=a,b$ in Section \ref{sec:molt_CH_fully}.

\begin{remark} \label{remark:Laplacian}
We observe that the Green's function will be convolved with terms of the form $\partial_{xx} f$, and so we will use the following result below
$$ \partial_{xx} = \alpha^2 (I - \mathcal{L}) \quad \Longrightarrow \quad  \mathcal{L}^{-1} [\partial_{xx} v]  = \alpha^2 \left(  \mathcal{L}^{-1} -  I \right) [v],$$
where we use the definition \eqref{eqn:L_Inverse} of the modified Helmholtz operator $\mathcal{L}$.
\end{remark}

%==============================================================================
%	3.3 Fully discrete solution
%==============================================================================
%---------------------------------------------------------------------------------------------------------------------------------------------------
\subsection{Fully-discrete solution to 1D CH equation} \label{sec:molt_CH_fully} 
%Our semi-discrete scheme is made by double inversion of a modified Helmholtz operator, defined in Section \ref{sec:molt_CH}.
It remains to discretize the convolution integral \eqref{eqn:L_Inverse}, and obtain a fully discrete algorithm. In our previous works \cite{causley2014method,causley2014higher,causley2015method,causley2016method}, we accomplish this with fast convolution; for convenience, we will summarize the fast algorithm here. 

First, the particular solution  \eqref{eqn:Iu} is split into $\mathcal{I}[v](x) = \mathcal{I}^L(x)+ \mathcal{I}^R(x)$, where
$$\mathcal{I}^L(x) = \frac{\alpha}{2} \int_a^x e^{-\alpha(x-x')}v(x')dx', \quad  \mathcal{I}^R(x)= \frac{\alpha}{2}\int_{x}^b e^{-\alpha(x'-x)}v(x')dx'. $$
The domain $\Omega \equiv [a,b]$ is partitioned into $N$ subdomains $[x_{j-1},x_j]$ 
$$a=x_0 < x_1 <\cdots < x_N = b, \quad h_j = x_{j} - x_{j-1},$$
we then evaluate the convolution operator at each grid point through $\mathcal{I}_j \equiv \mathcal{I}[v](x_j) = \mathcal{I}^L_j+ \mathcal{I}^R_j$.
Since each integral satisfies exponential recursion \cite{causley2014higher,causley2015method,causley2016method}, 
\begin{align} \label{eqn:IL}
\mathcal{I}^L_j &= e^{-\alpha h_j}\mathcal{I}^L_{j-1} + \mathcal{J}^L_j, \quad \mathcal{J}^L_j = \frac{\alpha}{2} \int_{x_{j-1}}^{x_j} e^{-\alpha (x_j -x')} v(x') dx', \\ \label{eqn:IR}
\mathcal{I}^R_j &= e^{-\alpha h_{j+1}}\mathcal{I}^R_{j+1} + \mathcal{J}^R_j, \quad \mathcal{J}^R_j = \frac{\alpha}{2} \int_{x_j}^{x_{j+1}} e^{-\alpha (x'-x_j)} v(x') dx'.
\end{align}
Those recursive updates are still exact, and only require computing the "local" convolutions $\mathcal{J}_j^L$ and $\mathcal{J}_j^R$. In \cite{causley2016method}, we derived a spatial quadrature of general order $M \geq 2$ for computing  these local integrals. For example, the second-order accurate $(M=2)$ quadrature on a uniform grid $(h_j = \Delta x)$ is given by 

\begin{align}
	J^{L}_{j} &\approx P v(x_{j})+Qv(x_{j-1})+R(v(x_{j+1})-2v(x_{j})+v(x_{j-1})) \\
	J^{R}_{j} &\approx P v(x_{j})+Qv(x_{j+1})+R(v(x_{j+1})-2v(x_{j})+v(x_{j-1}))
\end{align}
where defining $\nu = \alpha \dx$ and $d = e^{-\nu}$, the quadrature weights are given by
\begin{align} \notag
	P&= 1-\frac{1-d}{\nu}, \\ \notag
	Q&= -d+\frac{1-d}{\nu}, \\ \notag
	R&= \frac{1-d}{\nu^{2}}-\frac{1+d}{2\nu}.
\end{align}
Likewise, for higher order methods ($M>2$), the quadrature weights are pre-computed, so that the fast convolution algorithm is achieved as $\mathcal{O}(MN)$ per time step with user-defined order $M$ in space. In every simulation in this work, we choose $M=4$ or $M=6$. \\

Next, the homogeneous solution \eqref{eqn:L_Inverse} is used to enforce various boundary conditions. For example periodic boundary conditions lead to
$$ v^n (a) = v^n(b), \quad v_x^n(a) = v_x^n(b), \quad \forall n \in \mathbb{N}. $$ 
and evaluation of \eqref{eqn:L_Inverse} at $x=a,b$, produces a $ 2 \times 2 $ system for the unknowns $B_a$ and $B_b$: 
\begin{align} \notag
\mathcal{L}^{-1}[v^n](a) = \mathcal{L}^{-1}[v^n](b) \quad &\Longleftrightarrow \quad \mathcal{I}_0 + B_a + B_b \mu =  \mathcal{I}_N + B_a\mu + B_b, \\ \notag
\mathcal{L}_x^{-1}[v^n](a) = \mathcal{L}_x^{-1}[v^n](b) \quad &\Longleftrightarrow \quad \alpha (\mathcal{I}_0 - B_a + B_b \mu )=  \alpha (-\mathcal{I}_N + B_a\mu + B_b).
\end{align}
Solving this linear system yields
\begin{equation} \label{eqn:homogeneous} 
B_a = \frac{\mathcal{I}_N}{1 - \mu}, \quad B_b = \frac{\mathcal{I}_0}{1 - \mu},
\end{equation}
where $\mu = e^{-\alpha (b-a)}$ and $\mathcal{I}_0 = \mathcal{I}[v](a)$ and $\mathcal{I}_N = \mathcal{I}[v](b)$. Other boundary conditions (e.g. Neumann, Dirichlet) follow the analogous procedure that requires solving a simple $2 \times 2$ linear system. (cf. \cite{causley2015method})

%==============================================================================
%	4. Higher order schemes
%==============================================================================
%____________________________________%
\section{Higher order schemes}
\label{sec:higher_order}
In this Section we formulate second and third order time accurate methods by combining the ideas of MOL$^\text{T}$ formulation with backward difference formulas (BDF), singly diagonal implicit Runge-Kutta (SDIRK), and spectral deferred correction (SDC) methods. We will present several refinement studies to compare these approaches. 

%==============================================================================
%	4.1 BDF
%==============================================================================
%____________________________________%
\subsection{MOL$^\text{T}$ with BDF} \label{sec:BDF}
The BDF time stepping methods are one of the most commonly used implicit linear multi-step methods \cite{iserles2009first}. To achieve the desired order of accuracy, we discretize the time derivative using BDF formulas as follows:

\begin{align}
	\label{eqn:BDF2}
	\text{BDF2:}\quad & \left(I+\frac{2}{3}\epsilon^2 \Delta t \partial_{xxxx} \right)v^{n+1} = \frac{4}{3} v^n - \frac{1}{3}v^{n-1} + \frac{2}{3}\Delta t \partial_{xx} f^{n+1}, \\
	\label{eqn:BDF3}
	\text{BDF3:}\quad &  \left(I+\frac{6}{11}\epsilon^2 \Delta t \partial_{xxxx} \right)v^{n+1} = \frac{18}{11} v^{n} - \frac{9}{11}v^{n-1} + \frac{2}{11} v^{n-2}+ \frac{6}{11}\Delta t \partial_{xx} f^{n+1},
\end{align}
where we now require two and three previous time steps, respectively. Similar to the first-order scheme in Section \ref{sec:molt_CH}, we define two fixed-point iterations, depending on the time step $\Delta t$.
 First, energy-stable factorization iteration which has a time step lower bound for real-valued Green's functions. Second, completed square version by adding $S(v^{n+1,k+1} - v^{n+1,k})$ term, which has a time step upper bound for energy stability.
Specifically, the BDF 2 formulation \eqref{eqn:BDF2} utilizes
\begin{align}
	\notag
	\Delta t \geq \frac{3}{2} \epsilon^2: \quad & \mathcal{L} = I - \frac{4}{3}\Delta t \partial_{xx} +\frac{2}{3}\epsilon^2 \Delta t \partial_{xxxx} = \left(I - \frac{\partial_{xx}}{\alpha_1^2} \right) \left(I - \frac{\partial_{xx}}{\alpha_2^2} \right), \\
	\notag
	\Delta t < \frac{3}{2} \epsilon^2: \quad & \mathcal{L} = I - 2\sqrt{\frac{2}{3}\epsilon^2 \Delta t} \partial_{xx} +\frac{2}{3}\epsilon^2 \Delta t \partial_{xxxx} = \left(I - \frac{\partial_{xx}}{\alpha^2} \right)^2, 
\end{align}
where
\[
	\frac{1}{\alpha_1^2} = \frac{2}{3}\Delta t + \sqrt{\left(\frac{2}{3}\Delta t\right)^2 - \frac{2}{3}\epsilon^2 \Delta t}, \quad \frac{1}{\alpha_2^2} = \frac{2}{3}\Delta t - \sqrt{\left(\frac{2}{3}\Delta t\right)^2 - \frac{2}{3}\epsilon^2 \Delta t}, \quad \frac{1}{\alpha^2} = \sqrt{\frac{2}{3}\epsilon^2 \Delta t}.
\]
A similar result is obtained for the BDF 3 method.
%For BDF3 in \eqref{eqn:BDF3},
%\begin{align}
%	\notag
%	\Delta t \geq \frac{11}{6} \epsilon^2: \quad & \mathcal{L} = I - \frac{12}{6}\Delta t \partial_{xx} +\frac{6}{11}\epsilon^2 \Delta t \partial_{xxxx} = \left(I - \frac{\partial_{xx}}{\alpha_1^2} \right) \left(I - \frac{\partial_{xx}}{\alpha_2^2} \right), \\
%	\notag
%	\Delta t < \frac{11}{6} \epsilon^2: \quad & \mathcal{L} = I - 2\sqrt{\frac{6}{11}\epsilon^2 \Delta t} \partial_{xx} +\frac{6}{11}\epsilon^2 \Delta t \partial_{xxxx} = \left(I - \frac{\partial_{xx}}{\alpha^2} \right)^2, 
%\end{align}
%where $\frac{1}{\alpha_1^2} = \frac{6}{11}\Delta t + \sqrt{(\frac{6}{11}\Delta t)^2 - \frac{6}{11}\epsilon^2 \Delta t}$, $\frac{1}{\alpha_2^2} = \frac{6}{11}\Delta t - \sqrt{(\frac{6}{11}\Delta t)^2 - \frac{6}{11}\epsilon^2 \Delta t}$ and $\frac{1}{\alpha^2} = \sqrt{\frac{6}{11}\epsilon^2 \Delta t}$. 
The main advantage of BDF schemes is that the extension of first-order methods to higher-order is very straightforward. However, they also require initialization of the first few time steps. In next section, we will look at Ruge-Kutta (RK) methods, which do not have this requirement. 

%==============================================================================
%	4.2 SDIRK
%==============================================================================
%____________________________________%
\subsection{MOL$^\text{T}$ with SDIRK} \label{sec:SDIRK}
Singly implicit RK methods (SDIRK) are those which have the same diagonal elements in the Butcher Table, \cite{alexander1977diagonally}. To obtain a $P^{\text th}$ order method, RK methods, $P$ intermediate steps are computed at each time step.
%It is somewhat computationally expensive, but it is desirable for the initial step of multi step methods.
The second order method, SDIRK2, is as follows
\begin{align} \notag
\left( I + \eta \epsilon^2 \Delta t \partial_{xxxx} \right) K_1&= -\epsilon^2 \partial_{xxxx} v^n + \partial_{xx} f\left(v^n + \eta \Delta t K_1\right), \quad \eta = 1 - \frac{\sqrt{2}}{2}, \\ \notag
\left( I + \eta \epsilon^2 \Delta t \partial_{xxxx} \right) K_2&=  -\epsilon^2 \partial_{xxxx} \left( v^n + (1-\eta)\Delta t K_1 \right) + \partial_{xx} f\left(v^n + (1-\eta) \Delta t K_1+\eta \Delta t K_2\right) \\ \notag
 v^{n+1} &= v^n + \Delta t \{ (1-\eta) K_1 + \eta K_2 \},
\end{align}
where $\eta$ is the root of the polynomial $\frac{1}{2} - 2\eta + \eta^2$, as derived from the order conditions \cite{alexander1977diagonally}. Next, we construct two intermediate solution $K_1$ and $K_2$, using nonlinear iterative schemes based on Section \ref{sec:BDF}. First, if $\Delta t \geq \dfrac{\epsilon^2}{\eta}$, 
\begin{align} \notag
 K_1^{n+1,k+1}&= \mathcal{L}_1^{-1} \mathcal{L}_2^{-1} \left[- \epsilon^2 \partial_{xxxx} v^n + \partial_{xx} \left( \tilde{f}\left(v^n + \eta \Delta t K_1^{n+1,k}\right) +2v^n \right)\right], \quad \mathcal{L}_i = I - \frac{\partial_{xx}}{\alpha_i^2}\\ \notag
 K_2^{n+1,k+1}&=  \mathcal{L}_1^{-1} \mathcal{L}_2^{-1} \biggl[ -\epsilon^2 \partial_{xxxx} \left( v^n + (1-\eta)\Delta t K_1^{n+1,k} \right) \\ \notag
+& \partial_{xx} \left( \tilde{f}\left(v^n + (1-\eta)\Delta t K_1^{n+1,k}+ \eta \Delta t K_2^{n+1,k}\right) + 2(v^n + (1-\eta) \Delta t K_1^{n+1,k}) \right) \biggr],
\end{align}
where $\dfrac{1}{\alpha_1^2} = \eta\Delta t + \sqrt{(\eta\Delta t)^2 - \epsilon^2 \eta\Delta t}$ and $\dfrac{1}{\alpha_2^2} = \eta\Delta t - \sqrt{(\eta\Delta t)^2 - \epsilon^2 \eta\Delta t}$.
If $\Delta t < \dfrac{\epsilon^2}{\eta}$, 
\begin{align} \notag
 K_1^{n+1,k+1}&= \mathcal{L}^{-2}  \left[ -\epsilon^2 \partial_{xxxx} v^n + \partial_{xx} \left( f\left(v^n + \eta \Delta t K_1^{n+1,k}\right) -2\sqrt{\eta \epsilon^2 \Delta t} K_1^{n+1,k} \right)\right], \\ \notag
 K_2^{n+1,k+1}&= \mathcal{L}^{-2} \biggl[ -\epsilon^2 \partial_{xxxx} \left( v^n + (1-\eta)\Delta t K_1^{n+1,k} \right) \\ \notag
& + \partial_{xx} \left(f\left(v^n + (1-\eta)\Delta t K_1^{n+1,k}+ \eta \Delta t K_2^{n+1,k}\right) -2\sqrt{\eta \epsilon^2 \Delta t} K_2^{n+1,k} \right) \biggr],
\end{align}
where $\mathcal{L} \equiv  I - \sqrt{\eta \epsilon^2 \Delta t}\partial_{xx} $ is used. Below we will also make use of SDIRK3, which can be derived in a similar fashion.

\begin{remark} \label{remark:biLaplacian}
Similar to Remark \ref{remark:Laplacian}, the fourth derivative can be calculated as follows,
$$ \partial_{xxxx} = \alpha_1^2 \alpha_2^2(I - \mathcal{L}_1)(I - \mathcal{L}_2) \quad \Longrightarrow \quad \mathcal{L}_1^{-1} \mathcal{L}_2^{-1}[\partial_{xxxx} v]  = \alpha_1^2 \alpha_2^2 \left(  \mathcal{L}_1^{-1} - I \right)  \left(  \mathcal{L}_2^{-1} - I \right)[v].$$
\end{remark}

In Section \ref{sec:Refinement} we will present refinement studies for both the BDF and SDIRK methods. We note that for a small enough time step $\Delta t$, both methods converge as expected. However for the corresponding third-order methods (BDF3 and SDIRK3), as $\Delta t$ increases the order of convergence begins to plateau. To resolve this issue, we will suggest another higher order method.

%==============================================================================
%	4.3 SDC
%==============================================================================
%____________________________________%
\subsection{MOL$^\text{T}$ with SDC} \label{sec:SDC}
The spectral deferred correction (SDC) methods are a class of time integrators \cite{dutt2000spectral}. First a prediction to the solution ("level 0") is computed using low order schemes (e.g. Backward Euler), and then the error (defect) is corrected at subsequent stages, using higher order integrators. Our SDC procedure follows the presentation in \cite{Ong}.

	\begin{enumerate}[itemsep=2mm]
		\item {\bf Prediction step (level [0]):} Subdivide the time interval $[0,T]$ with uniform $\Delta t$: 
		$$0\equiv t^0 < t^1 < \cdots < t^{N_t} \equiv T.$$
		Compute $\{ v^{[0]}_m (x) \}_{0 \leq m \leq N_t}$ via Backward Euler approximation in \eqref{eqn:CH_BE} for CH equation, which is first-order approximation to the exact solution $\{ y_m (x) \}_{0 \leq m \leq N_t}$.
		
		\item {\bf Correction step (level [1]):} Assume that $v^{(0)}(x,t)$ is a polynomial interpolant, approximating the exact solution $y(x,t)$ satisfying
			$$v^{(0)}(x,t^m) \equiv v^{[0]}_m(x), \quad m = n+1, n, \cdots, n+1-P,$$ 
			where $P$ will be specified later. The error equation is defined 
			$$ e(x,t) = y(x,t) - v^{(0)}(x,t),$$ 
			and the residual (or "defect") is 
			$$ \gamma(x,t) = v^{(0)}_t - \mathcal{F}_{\text{CH}}(v^{(0)}), \quad \mathcal{F}_{\text{CH}}(v) = -\epsilon^2\partial_{xxxx}v + \partial_{xx} f(v).$$ 
			We take the derivative of the error equation with respect to $t$, and rewrite it using the residual definition, 
			\begin{align} \notag 
 &\quad e_t (x,t) = y_t(x,t)- v^{(0)}_t(x,t) \equiv \mathcal{F}_{\text{CH}}(y(x,t)) - \mathcal{F}_{\text{CH}}(v^{(0)}(x,t)) - \gamma(x,t),  \\ \notag 
\Longleftrightarrow& \quad e_t(x,t) + \gamma(x,t) = \mathcal{F}_{\text{CH}}((e+v^{(0)})(x,t)) - \mathcal{F}_{\text{CH}}(v^{(0)}(x,t)), \\ \notag
\Longleftrightarrow& \quad \frac{\partial}{\partial t} \left (v^{(1)}(x,t) - \int_0^t  \mathcal{F}_{\text{CH}}(v^{(0)}(x,\tau)) d\tau \right) = \mathcal{F}_{\text{CH}}(v^{(1)}(x,t)) - \mathcal{F}_{\text{CH}}(v^{(0)}(x,t)), 
\end{align} 
			
			where we assume that the initial condition $e(x,0) = 0$, and $v^{(1)}= v^{(0)}+e$.
			Hence, our updating "level [1]" solution $\{ v^{[1]}_m \}$ is found by approximating the above differential equation with the same method as "level 0". We apply the backward Euler scheme, thus
			\begin{equation} \label{eqn:IDC_BE}
v^{[1]}_{n+1} - v^{[1]}_n = \Delta t \left(  \mathcal{F}_{\text{CH}}(v^{[1]}_{n+1}) -  \mathcal{F}_{\text{CH}}(v^{[0]}_{n+1}) \right) + \int_{t^n}^{t^{n+1}}  \mathcal{F}_{\text{CH}}(v^{(0)}(x,\tau)) d\tau.
\end{equation}

			\item {\bf Correction step (level [j]):} The process is then iterated by generalizing \eqref{eqn:IDC_BE}, 
				\begin{equation} \label{eqn:IDC_BE_j}
v^{[j]}_{n+1} - \Delta t  \mathcal{F}_{\text{CH}}(v^{[j]}_{n+1}) = v^{[j]}_n - \Delta t \mathcal{F}_{\text{CH}}(v^{[j-1]}_{n+1})  + \int_{t^n}^{t^{n+1}}  \mathcal{F}_{\text{CH}}(v^{(j-1)}(x,\tau)) d\tau,
\end{equation}
where the terms including updating solution $v^{[j]}$ have been collected on the left hand side, and the old solution $v^{[j-1]}$ are on the right hand side.
	\end{enumerate}

To complete SDC, we must consider an approximation of the integral in \eqref{eqn:IDC_BE_j}:
\begin{equation} \label{eqn:integral_quadrature}
 \int_{t^n}^{t^{n+1}}  \mathcal{F}_{\text{CH}}(v^{(j-1)}(x,\tau)) d\tau = \begin{cases} 
\Delta t \sum_{i=0}^{P} \tilde{q}_i  \mathcal{F}^{[j-1]}_{n+1-i}, \qquad &\text{if} \quad n \geq P-1, \\ 
\Delta t \sum_{i=0}^{P}\ \tilde{q}_i  \mathcal{F}^{[j-1]}_{i} \qquad &\text{if} \quad n < P-1,
\end{cases}
\end{equation}
where $ \mathcal{F}^{[j-1]}_{n} =  \mathcal{F}_{\text{CH}}(v^{[j-1]}_n)$, and $\tilde{q}_i$ are quadrature weights (cf. \cite{Ong}). Note that the number of terms in the sum \eqref{eqn:integral_quadrature} is $P+1$, where $P$ is the order of polynomial interpolation $v^{(j-1)}$. The integral must be approximated with increasing accuracy as the level increases, so that $P \geq j$ at "level $[j]$". In Section \ref{sec:Refinement}, we will show that this order $P$ affects the asymptotic region of stability of SDC3 method.\\

We now combine MOL$^\text{T}$ scheme with these higher-order SDC methods. For instance, the second-order SDC scheme (SDC2) only requires one more correction (level $[1]$). If  $\Delta t \geq \epsilon^2$,
\begin{align} \label{eqn:SDC_fixed}
v^{[1]}_{n+1,k+1} = \mathcal{L}_1^{-1} \mathcal{L}_2^{-1} \left[ v^{[1]}_{n} +\Delta t \partial_{xx} \left( \tilde{f}^{[1]}_{n+1,k} - \frac{ {f}^{[0]}_{n+1} - f^{[0]}_{n}}{2}\right) +\frac{\epsilon^2 \Delta t}{2} \partial_{xxxx} \left( v^{[0]}_{n+1} -v^{[0]}_{n} \right)\right]
\end{align}
where quadrature weights of \eqref{eqn:integral_quadrature} are $\tilde{q}_1 = \tilde{q}_2 = \frac{1}{2}$ ($P=1$: trapezoidal rule) and $\mathcal{L}_i$ are same with the first-order scheme in Section \ref{sec:molt_CH}. Similarly, if $\Delta t < \epsilon^2,$ 
\begin{align} \label{eqn:SDC2_fixed}
v^{[1]}_{n+1,k+1} = \mathcal{L}^{-2} \left[ v^{[1]}_{n} +\Delta t \partial_{xx} \left( f^{[1]}_{n+1,k} - 2\sqrt{\frac{\epsilon^2}{\Delta t}}v^{[1]}_{n+1,k} - \frac{ {f}^{[0]}_{n+1} - f^{[0]}_{n}}{2}\right) +\frac{\epsilon^2 \Delta t}{2} \partial_{xxxx} \left( v^{[0]}_{n+1} -v^{[0]}_{n} \right)\right]
\end{align}

%==============================================================================
%	4.4 Refinement studies
%==============================================================================
%____________________________________%
\subsection{Refinement studies for higher order methods} \label{sec:Refinement}

We solve \eqref{eqn:CahnHilliard} with periodic boundary conditions, with the same initial condition used in \cite{Jaylan},
\begin{equation} \label{eqn:CH1D_initial}
u_0 (x) = \cos(2x) + \dfrac{1}{100} e^{\cos\left(x+\frac{1}{10}\right)}, \qquad x \in [0,2\pi].
\end{equation}
We integrate up to a final time $T_{\text{final}}$ using each second-order method. Since an exact solution is not available, we perform time refinement using successive approximations, and the approximate error with step size $\Delta t$ is $\| u_{\Delta t} - u_{\Delta t /2 } \|_{\infty}$, measured in the maximum norm. Successive errors for each second-order method is presented in Table \ref{tab:CH_refinement_second}.
The parameters used for all computations were:
\begin{equation} \label{eqn:parameters}
 \epsilon = 0.18, \quad \Delta x = \dfrac{2\pi}{512} \approx 0.0123, \quad  T_{\text{final}} = 1, \quad N_{\text{tol}} = 10^{-12},
\end{equation}
where $N_{\text{tol}}$ is a tolerance for fixed point iterations such that $\| v^{n+1,k+1} - v^{n+1,k} \|_{\infty} < N_{\text{tol}} $. We use $6^{\text{th}}$-order spatial quadrature, so that the dominant error was temporal.\\

%============================================================================== Figure 1
\begin{figure}[hbtp] \label{1D_CH_energy}
	\centering
	\subfigure[Energy decay ($2^{\text{nd}}$ order methods)]{\label{fig:energy_secondorder}\includegraphics[width=.48\textwidth]{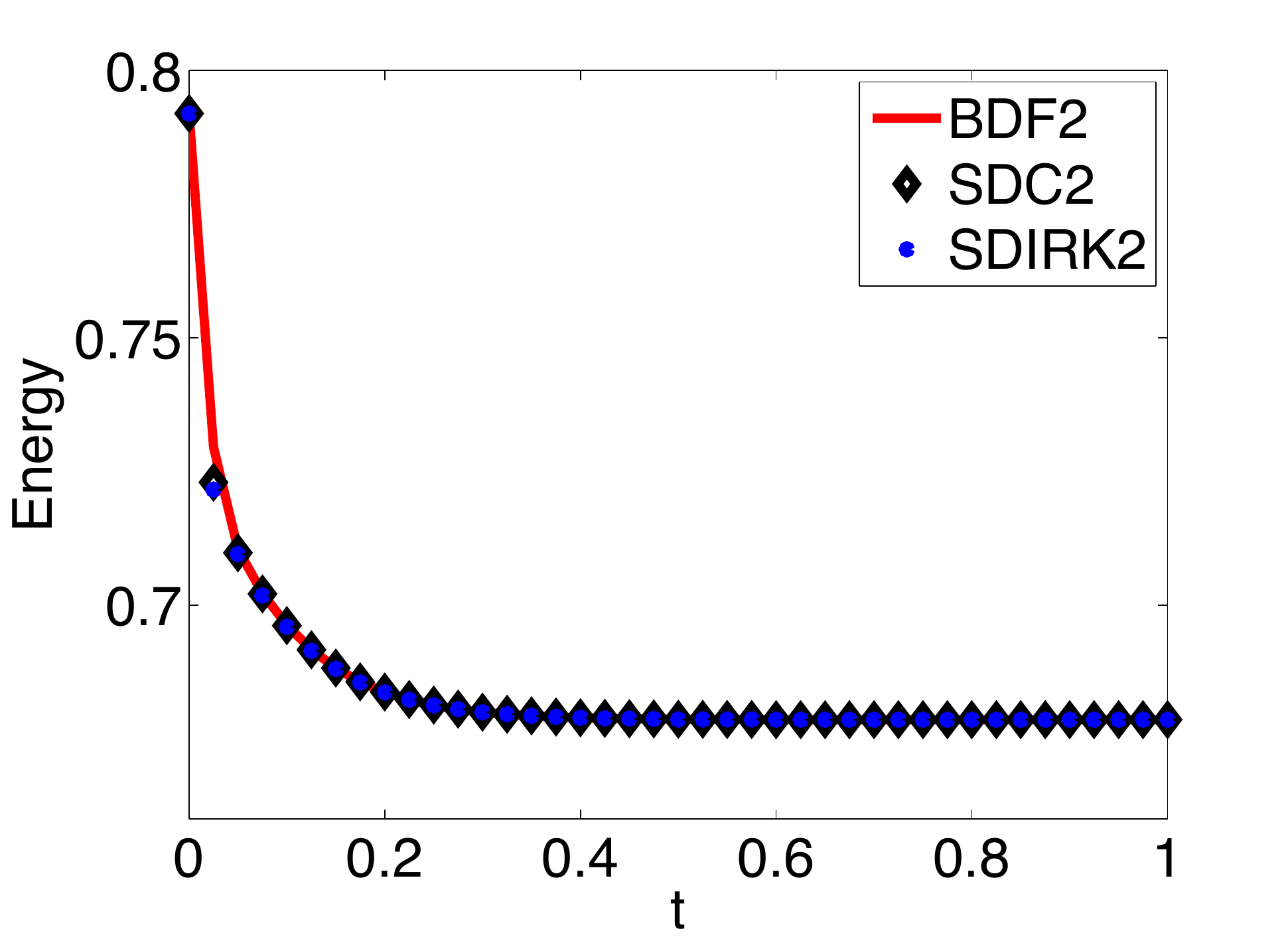}}
	\subfigure[Iteration count($2^{\text{nd}}$ order methods)]{\label{fig:iteration_number}\includegraphics[width=.48\textwidth]{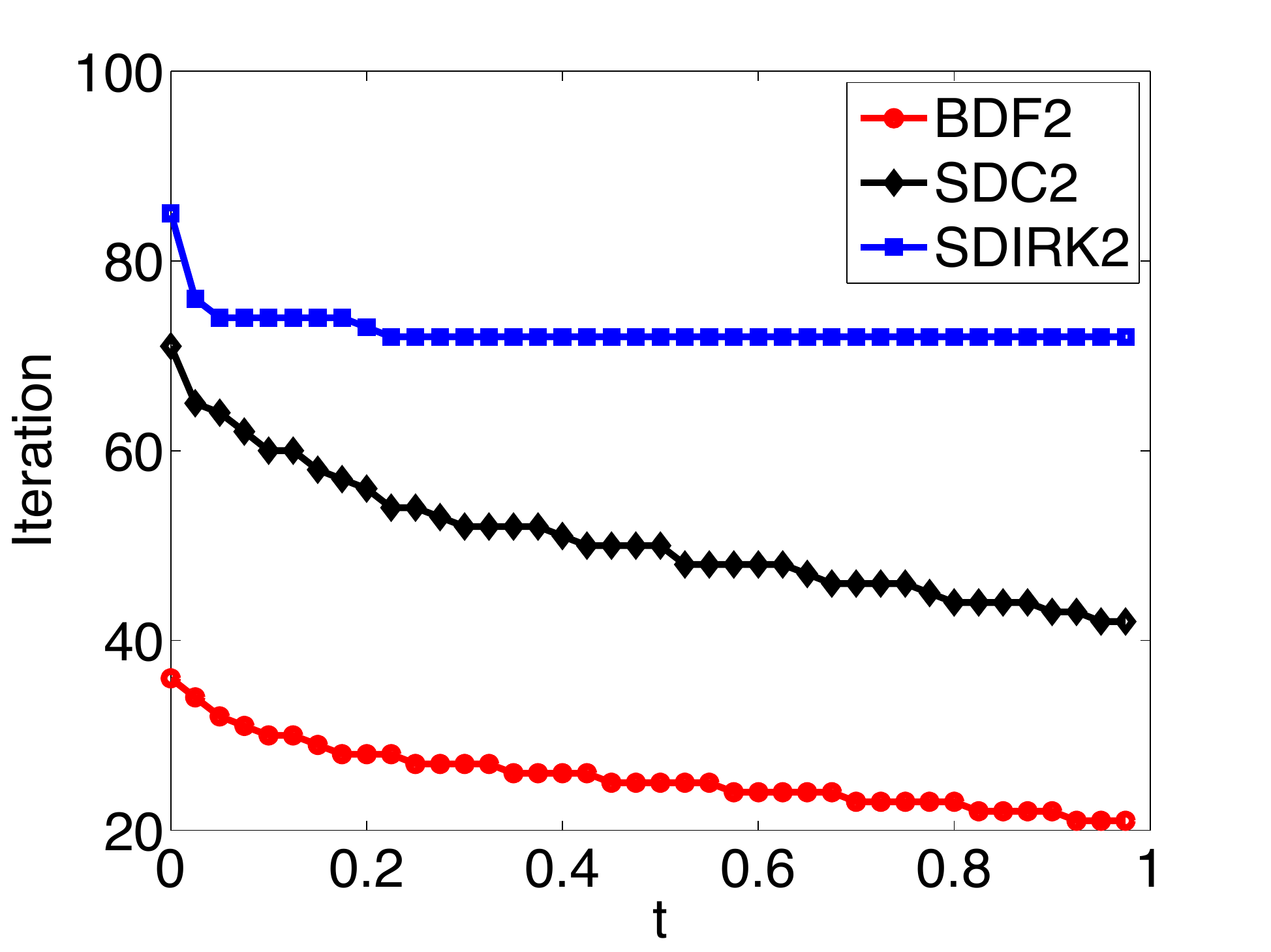}} \\ 
	\subfigure[Energy decay ($3^{\text{rd}}$ order methods)]{\label{fig:energy_thirdorder}\includegraphics[width=.48\textwidth]{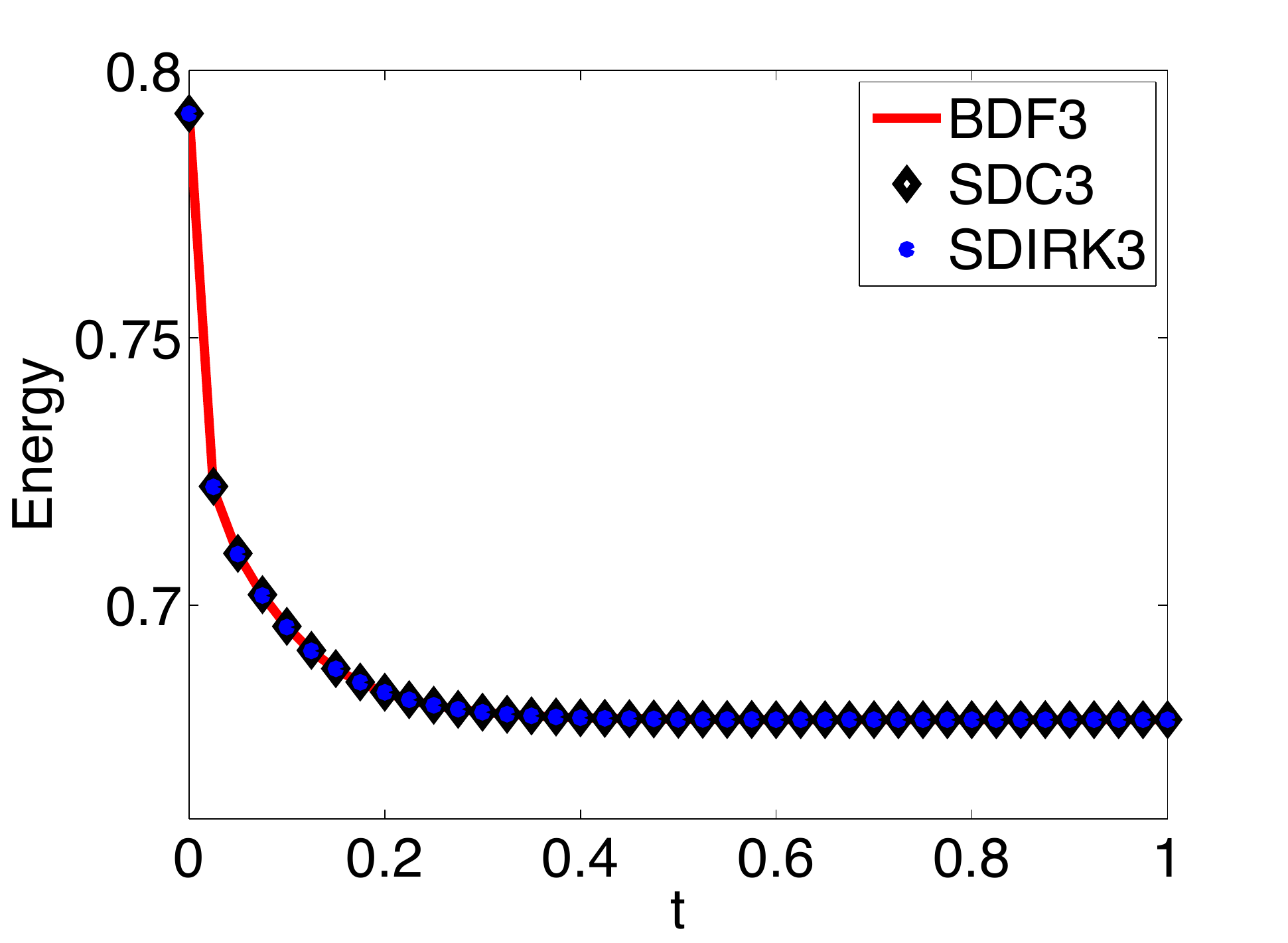}}
	\subfigure[Iteration count ($3^{\text{rd}}$ order methods)]{\label{fig:iteration_number3}\includegraphics[width=.48\textwidth]{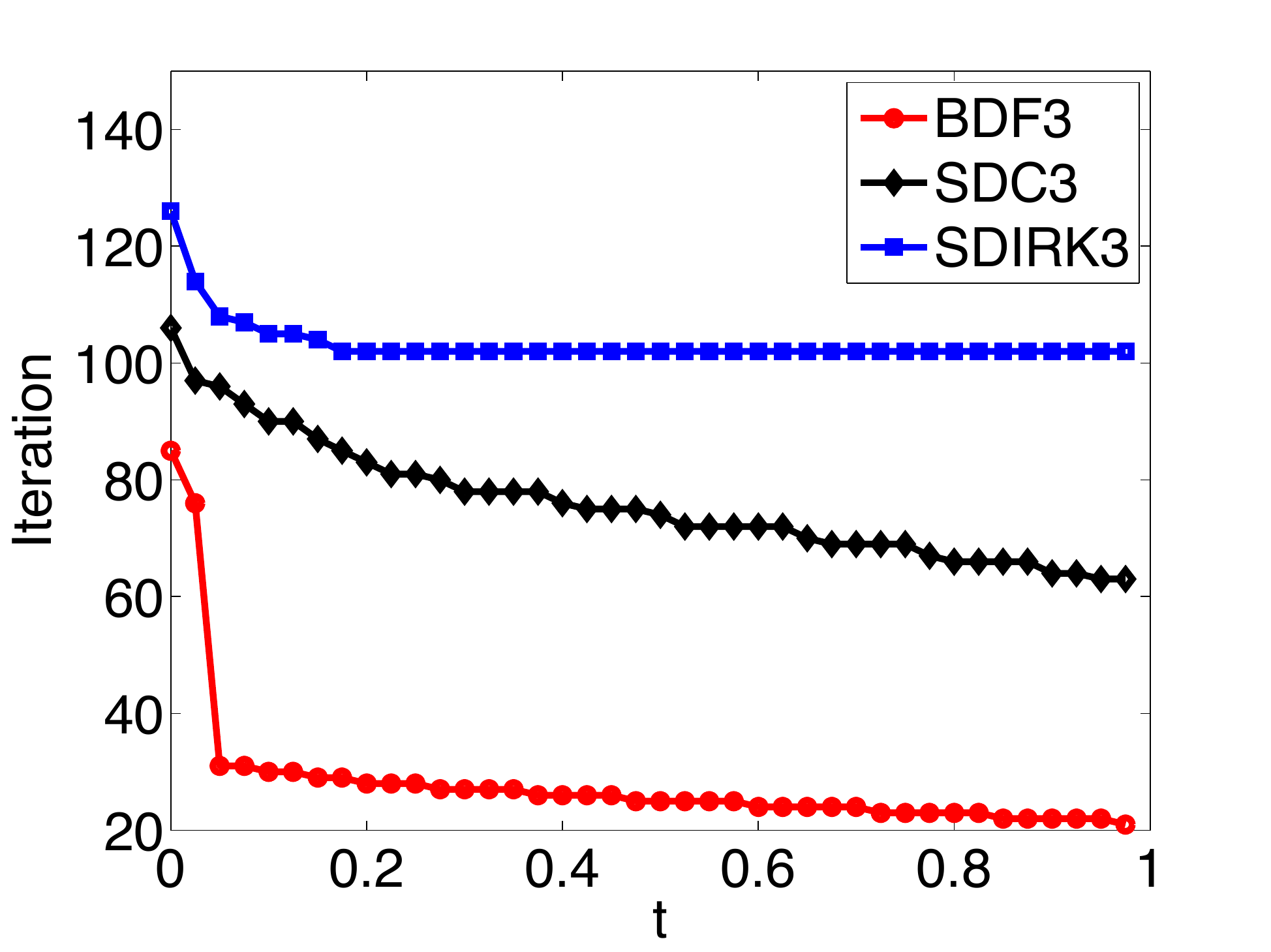}} 
	\caption{Energy descent and nonlinear iteration count history of all second- and third- order methods with fixed time step $\Delta t = 0.025$ and parameters \eqref{eqn:parameters}.}
	\label{fig:second_order_methods}
\end{figure}
%==============================================================================

%============================================================================== Table 1
\begin{table}[htbp]
\begin{centering}
\caption{Refinement studies of second-order methods for 1D CH equation with periodic BC.}
\begin{tabular}{|c||c|c||c|c||c|c|}
\hline 
		& \multicolumn{2}{c||}{BDF2}	& \multicolumn{2}{c||}{SDIRK2}				& \multicolumn{2}{c|}{SDC2}			\\ \hline
$\Delta t$		& 	$L^{\infty}$ error			& order & 	$L^{\infty}$ error	& order & 	$L^{\infty}$ error	& order	\\ \hline
$0.0500$		& $\num{0.000029454}$	& $-$			& $\num{0.0000047787}$	& $-$	 & $\num{0.000063763}$	& $-$	 		\\ \hline
$0.0250$		& $\num{0.0000072771}$	& $2.0170 $ & $\num{0.0000012050}$	& $1.9876$		& $\num{0.000015412}$	& $2.0487$		\\ \hline
$0.0125$		& $\num{ 0.0000019755}$	& $1.8812$ & $\num{0.00000030081}$	& $2.0021$		& $\num{0.0000037618}$	& $2.0345$		\\ \hline
$0.0063$	& $\num{0.00000053869}$	& $1.8747$	& $\num{0.000000075076}$	& $2.0024$	&$\num{0.00000092786}$	& $2.0194$		\\ \hline
$0.0031$	& $\num{0.00000014352}$	& $1.9082$	& $\num{0.000000018732}$	& $2.0029$	& $\num{0.00000023032}$	& $2.0103$		\\ \hline
\end{tabular}
\label{tab:CH_refinement_second}
\end{centering}.
\end{table}
%==============================================================================

The factorization of the Helmholtz operators is switched based on the time step size, and is different for each method:
%$$ \text{BDF2: } \Delta t = \frac{3}{2} \epsilon^2, \quad \text{SDIRK2: } \Delta t = \frac{\epsilon^2}{1-\frac{\sqrt{2}}{2}}, \quad \text{SDC2: } \Delta t = \epsilon^2.$$
$$ \text{BDF2: } \Delta t = \frac{3}{2} \epsilon^2 = 0.0486, \quad \text{SDIRK2: } \Delta t = \frac{\epsilon^2}{1-\frac{\sqrt{2}}{2}}\approx 0.1106, \quad \text{SDC2: } \Delta t = \epsilon^2 = 0.0324.$$
%With these criteria, we switched the corresponding fixed-point iterations, and 
Table \ref{tab:CH_refinement_second} shows that all methods exhibit second order convergence. Moreover in Figure \ref{fig:energy_secondorder}, we see that the total energy of each method decays during this time evolution. 

In Figure \ref{fig:iteration_number}, we plot the total iteration history of each scheme for the tolerance $N_{\text{tol}} = 10^{-12}$. The wall clock time (seconds) of BDF2, SDC2, SDIRK2 is $0.9725 (s)$, $1.9003 (s)$, and $2.6468 (s)$, respectively, for a final simulation time $T_{\text{final}} = 1$; so the BDF2 method computationally the most efficient. We observe that SDIRK2 is the slowest, as a nonlinear solve is required for both stages $K_1$ and $K_2$, at each time step.

Next, we compare the corresponding third-order schemes for the CH equation, with the same parameters in \eqref{eqn:parameters}, and with the following considerations: \\
\begin{itemize}
\item {\bf [BDF3]} 
	Need two initial steps: SDIRK2 used. $( \text{Switch criterion: } \Delta t_{\text{switch}} = \frac{11}{6} \epsilon^2 )$ \\
\item {\bf [SDIRK3]} 
	Need to compute $K_1, K_2, K_3$ at each update. $( \Delta t_{\text{switch}}= \frac{\epsilon^2}{\eta}, \eta \approx 0.4359)$ \\
\item {\bf [SDC3]} 
	Compute from level [0] to level [2] with the quadrature order $P$ in \eqref{eqn:integral_quadrature}: First, choose $P=2$ at level $[2]$, then weights are $\tilde{q}_0 = \dfrac{5}{12}, \tilde{q}_1 = \dfrac{8}{12}, \tilde{q}_2 = -\dfrac{1}{12}$. $( \Delta t_{\text{switch}} = \epsilon^2)$ \\
\end{itemize}
The energy and iteration history of third order schemes are shown in Figures \ref{fig:energy_thirdorder} and \ref{fig:iteration_number3}. Again we see that the numerical energy decays. The beginning iteration number of BDF3 is larger than that of BDF2, since the initialization requires SDIRK2. But BDF3 is still the most efficient method of the third order schemes. The wall clock time (seconds) of BDF3, SDC3, SDIRK3 is $1.0355 (s)$, $2.8539(s)$, and $3.6613 (s)$, respectively, for a final simulation time $T_{\text{final}} = 1$.

The successive errors of third-order methods are shown in Table \ref{tab:CH_refinement_third}.
%============================================================================== Table 2
\begin{table}[htbp]
\begin{centering}
\caption{Refinement studies of third-order methods for the 1D CH equation with periodic BC.}
\begin{tabular}{|c||c|c||c|c||c|c|}
\hline 
		& \multicolumn{2}{c||}{BDF3}	& \multicolumn{2}{c||}{SDIRK3}				& \multicolumn{2}{c|}{SDC3}			\\ \hline
$\Delta t$		& 	$L^{\infty}$ error			& order & 	$L^{\infty}$ error	& order & 	$L^{\infty}$ error	& order	\\ \hline
$0.0500$		& $\num{0.000016001}$	& $-$			& $\num{0.0000017259}$	& $-$	 & $\num{0.000012669}$	& $-$	 		\\ \hline
$0.0250$		& $\num{0.0000052832}$	& $1.5987 $ & $\num{0.00000036118}$	& $2.2566$		& $\num{0.0000021002}$	& $2.5927$		\\ \hline
$0.0125$		& $\num{ 0.0000011756}$	& $2.1680$ & $\num{0.000000062823}$	& $2.5234$		& $\num{0.00000031123}$	& $2.7545$		\\ \hline
$0.0063$	& $\num{0.00000019314}$	& $2.6057$	& $\num{0.0000000096163}$	& $2.7077$	&$\num{0.000000042615}$	& $2.8686$		\\ \hline
$0.0031$	& $\num{0.000000026506}$	& $2.8652$	& $\num{0.0000000013011}$	& $2.8858$	& $\num{0.0000000054185}$	& $2.9754$		\\ \hline
$0.0016$	& $\num{0.0000000032519}$	& $3.0270$	& $\num{0.00000000017447}$	& $2.8987$	& $\num{0.00000000066924}$	& $3.0173$		\\ \hline
\end{tabular}
\label{tab:CH_refinement_third}
\end{centering}
\end{table}
Each method achieves third order convergence for small $\Delta t$; but the order of convergence begins to plateau for larger time steps. To this end, SDC3 has a slight advantage. Indeed upon raising the polynomial order \eqref{eqn:integral_quadrature} from $P=2$ to $P=3$, which requires one more correction at each time step, we obtain third order accuracy even for large time steps, as shown in Table \ref{tab:CH_refinement_third_2}.

%However, we point out that this plateau is less severe for SDC3 than BDF3 or SDIRK3; the convergence order
%$ C_{\text{R}} := \log_2 \left( \frac{ \| u_{\Delta t} - u_{\Delta t/2} \|_{\infty} }{\| u_{\Delta t/2} - u_{\Delta t/4} \|_{\infty}} \right)$
%of SDC3 starts around $2.59$, which is closer to the expected order, compared to BDF3 ($ C_{\text{R}} = 1.6$) and SDIRK3 ($  C_{\text{R}} = 2.26$) when $\Delta t = 0.05$.
%\todo{CFL numbers only make sense for hyperbolic problems...}
%============================================================================== Table 3
\begin{table}[htbp]
\begin{centering}
\caption{Refinement study of SDC3 ($P=3$) for the 1D CH equation.}
\begin{tabular}{|c||c|c|}
\hline 
		& \multicolumn{2}{c|}{SDC3 ($P=3$)}			\\ \hline
$\Delta t$		& 	$L^{\infty}$ error			& order  	\\ \hline 
$0.0500$		& $\num{0.0000099462}$	& $-$			\\ \hline
$0.0250$		& $\num{0.0000013061}$	& $2.9289 $ 	\\ \hline
$0.0125$		& $\num{ 0.00000014142}$	& $3.2072$ 	\\ \hline
$0.0063$	& $\num{0.000000016155}$	& $3.1299$	         \\ \hline 
$0.0031$	& $\num{0.0000000017851}$	& $3.1779$	\\ \hline
$0.0016$	& $\num{0.00000000022767}$	& $2.9710$	\\ \hline
\end{tabular}
\label{tab:CH_refinement_third_2}
\end{centering}
\end{table}

%Furthermore, if we use higher quadrature of order $P$ in \eqref{eqn:integral_quadrature} for SDC3, then the anticipated rate of convergence is observed even for larger time steps $\Delta t$. For instance, if we use $P=2$, (the quadrature weights are $\tilde{q}_0 = \dfrac{9}{24}, \tilde{q}_1 = \dfrac{19}{24}, \tilde{q}_2 = -\dfrac{5}{24}$ and $\tilde{q}_3 = \dfrac{1}{24}$), then the rate is roughly third order even with the larger time steps, as shown in Table \ref{tab:CH_refinement_third_2}. \todo{conclusion remark}

%==============================================================================
%	5. Multiple spatial dimension
%==============================================================================
\section{Multiple spatial dimension}
\label{sec:multiD}

We now extend the 1D solver to multiple spatial dimensions via dimensional splitting \cite{causley2014higher,causley2015method, causley2016method}. We first write the 2D modified Helmholtz operator as
\begin{equation} \label{eqn:2D_Helmholtz}
\mathcal{L} = I - \frac{\Delta}{\alpha^2} = \left( I - \frac{\partial_{xx}}{\alpha^2} \right) \left( I - \frac{\partial_{yy}}{\alpha^2} \right) - \frac{\partial_{xx}\partial_{yy}}{\alpha^4}  \equiv \mathcal{L}_x \mathcal{L}_y - \frac{\partial_{xx}\partial_{yy}}{\alpha^4},
\end{equation}
where $\mathcal{L}_x$ and $\mathcal{L}_y$ are univariate operators, and so the fourth order term represents a splitting error.
%Later, we should formally invert both operators. Then,
Now $\mathcal{L}_x^{-1}$ is applied for fixed $y$, and vice versa for $\mathcal{L}_y^{-1}$ in a line-by-line fashion, similar to alternating direction implicit (ADI) type method \cite{douglas1955numerical,fairweather1967new}. Our key observation is that if we include the splitting error term from equation \eqref{eqn:2D_Helmholtz} in the fixed point iteration, then we can simultaneously solve the nonlinear problem, and correct the splitting error. More detailed analysis of the 2D CH equation \eqref{eqn:CahnHilliard} and 2D CH vector equations \eqref{eqn:CH_vector_reaction} will be presented below.

%==============================================================================
%	5.1 2D CH equation 
%==============================================================================
\subsection{Semi-discrete solution to 2D CH equation}\label{sec:CH_2D}
We now construct the semi-discrete scheme for the 2D CH equation, using the Backward Euler method. The analogous higher order schemes follow accordingly. Starting from the 1D scheme \eqref{eqn:CH_BE_fixed_complete}, we replace $\partial_{xx}$ with the Laplacian operator $\Delta = \partial_{xx} + \partial_{yy}$. If $\Delta t < \epsilon^2$, we  complete the square of the operator, and have
\begin{equation} \label{eqn:CH_BE_2D} 
 \left( I - \frac{\Delta}{\alpha^2} \right)^{2} [v^{n+1,k+1}]= v^n + \Delta t \Delta \left( f^{n+1,k} -2\sqrt{\frac{\epsilon^2}{\Delta t}} v^{n+1,k}\right), 
\end{equation}
where $\frac{1}{\alpha^2} = \sqrt{\epsilon^2 \Delta t}$. Plugging the identity in \eqref{eqn:2D_Helmholtz}, by lagging the mixed derivative term along with the nonlinear term,
\begin{equation} \label{eqn:CH_BE_2D_2} 
\left( \mathcal{L}_x  \mathcal{L}_y \right)^2 [v^{n+1,k+1}]= v^n + \Delta t \Delta \left( f^{n+1,k} -2\sqrt{\frac{\epsilon^2}{\Delta t}} v^{n+1,k}\right) + \left( 2\frac{\partial_{xx}\partial_{yy}}{\alpha^4}  - \left(\frac{\partial_{xx}\partial_{yy}}{\alpha^4} \right)^2 \right) v^{n+1,k}
\end{equation} 
Note that Laplacian operator and mixed derivative can be replaced as follows:
$$\Delta = \alpha^2 \left(I -  \mathcal{L}_x \mathcal{L}_y + \frac{\partial_{xx}\partial_{yy}}{\alpha^4}\right), \quad \frac{\partial_{xx}\partial_{yy}}{\alpha^4} = (\mathcal{L}_x - I)(\mathcal{L}_y - I). $$
Now we formally invert both operators to the right hand side of \eqref{eqn:CH_BE_2D_2}, 
\begin{equation} \label{eqn:CH_BE_2D_3} 
v^{n+1,k+1}= \left(  \mathcal{L}_x \mathcal{L}_y  \right)^{-2} \left[v^n \right]+ \left(  \mathcal{L}_x \mathcal{L}_y  \right)^{-1}   \mathcal{C}_1 [ \alpha^2 \Delta t ( f^{n+1,k} -2\sqrt{\frac{\epsilon^2}{\Delta t}} v^{n+1,k})] + \mathcal{C}_2 \left[v^{n+1,k} \right]
\end{equation} 
where 
\begin{align} \label{eqn:CH_2D_Doperator1} 
 \mathcal{C}_1 &=\left(  \mathcal{L}_x \mathcal{L}_y  \right)^{-1} -I +\mathcal{D}_x \mathcal{D}_y, \quad \mathcal{D}_\gamma = I - \mathcal{L}_{\gamma}^{-1}, (\gamma = \{x,y\}) \\ \label{eqn:CH_2D_Doperator2} 
  \mathcal{C}_2 &= 2 \mathcal{D}_x \mathcal{D}_y - \left ( \mathcal{D}_x \mathcal{D}_y \right)^2.
\end{align}
As shown, every mixed-derivative splitting error term can be controlled by applying $\mathcal{D}_{\gamma}$, ($\gamma = \{x,y\}$) operators, which can be constructed in a line-by-line fashion. We emphasize that this allows us to remove splitting error, which is $O(\frac{1}{\alpha^4}) = O(\Delta t)$. Similar treatment of higher order BDF, SDIRK and SDC methods shows that the corresponding splitting errors can also be removed in this manner. Also, the fully discrete scheme follows from line-by-line spatial discretization, as outlined in Section \ref{sec:molt_CH_fully}.\\

We now consider the standard benchmark initial states in the 2D setting \cite{Jaylan, willoughby2011high}, to confirm the temporal order of accuracy. The initial condition is
\begin{equation}\label{eqn:CH2D_initial}
u_0(x,y) = 2 e^{(\sin(x)+\sin(y)-2)} + 2.2 e^{(-\sin(x)-\sin(y)-2)} -1, \qquad (x,y) \in [0,2\pi]^2,
\end{equation}
with the periodic boundary conditions, and the following parameters
\begin{equation} \label{eqn:parameters_2D}
 \epsilon = 0.18, \quad \Delta x = \dfrac{2\pi}{128} \approx 0.0491, \quad  T_{\text{final}} = 1 \hspace{2mm} (0 \leq t \leq T_{\text{final}}), \quad N_{\text{tol}} = 10^{-6}.
\end{equation}
We use a $4^{\text{th}}-$order spatial quadrature to ensure that temporal error is dominant.

The temporal refinement study of each second-order scheme for the 2D CH equation is shown in Table \ref{tab:CH_refinement_second_2D}.  We observe the expected second-order of convergence for all three methods. 
%============================================================================== Table 4
\begin{table}[htbp]
\begin{centering}
\caption{Refinement studies of second-order methods for the 2D CH equation.}
\begin{tabular}{|c||c|c||c|c||c|c|}
\hline 
		& \multicolumn{2}{c||}{BDF2}	& \multicolumn{2}{c||}{SDIRK2}				& \multicolumn{2}{c|}{SDC2}			\\ \hline
$\Delta t$		& 	$L^{\infty}$ error			& order & 	$L^{\infty}$ error	& order & 	$L^{\infty}$ error	& order	\\ \hline
$0.1000$		& $\num{0.0037891}$	& $-$			& $\num{0.0010250}$	& $-$	 & $\num{0.0023334}$	& $-$	 		\\ \hline
$0.0500$		& $\num{0.00082626}$	& $2.1972 $ & $\num{0.00026950}$	& $1.9272$		& $\num{0.00050570}$	& $2.2061$		\\ \hline
$0.0250$		& $\num{ 0.00020319}$	& $2.0238$ & $\num{0.000069796}$	& $1.9491$		& $\num{0.00011000}$	& $2.2007$		\\ \hline
$0.0125$	& $\num{0.000046909}$	& $2.1149$	& $\num{0.000017809}$	& $1.9706$	&$\num{0.000028312}$	& $1.9581$		\\ \hline
\end{tabular}
\label{tab:CH_refinement_second_2D}
\end{centering}
\end{table}
%============================================================================== Figure 2
\begin{figure}[h!] \label{2D_CH_energy}
	\centering
	\subfigure[Energy decay ($2^{\text{nd}}$ order methods)]{\label{fig:energy_secondorder_2D}\includegraphics[width=.48\textwidth]{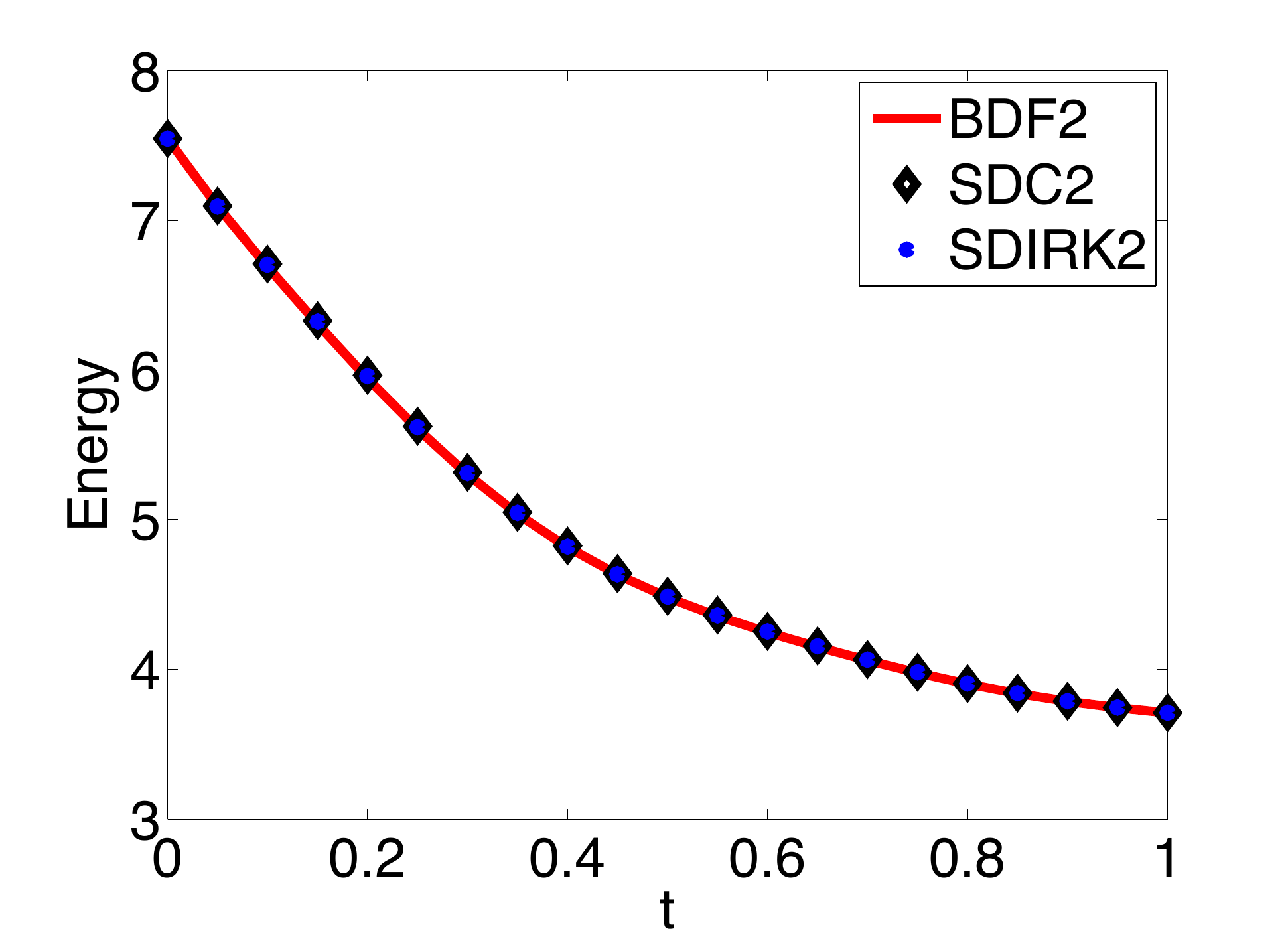}}
	\subfigure[Iteration count($2^{\text{nd}}$ order methods)]{\label{fig:iteration_number_2D}\includegraphics[width=.48\textwidth]{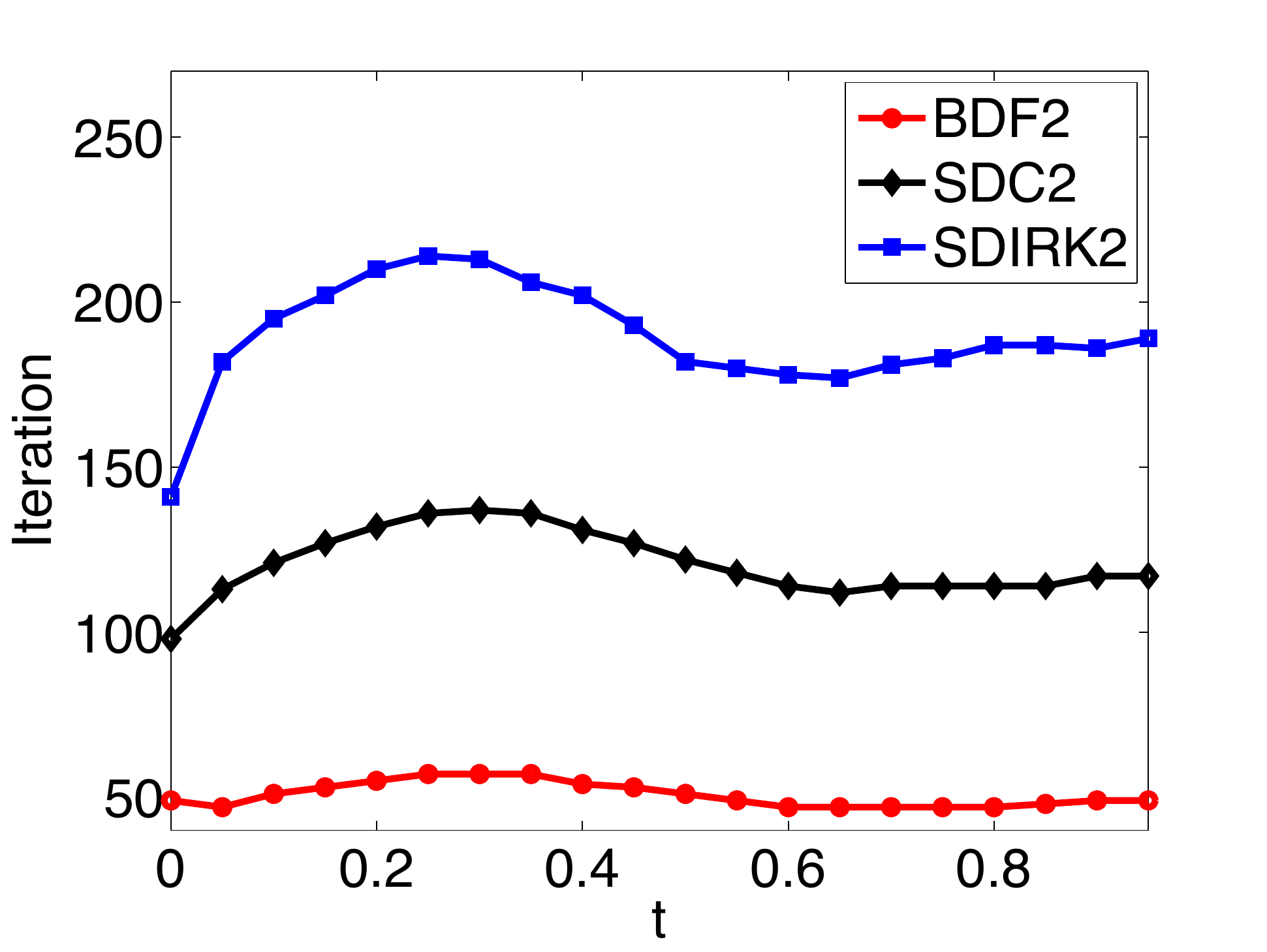}}
	\caption{Energy descent and nonlinear iteration count history of all second-order 2D methods with fixed time step $\Delta t = 0.05$ and with parameters \eqref{eqn:parameters_2D}}
	\label{fig:second_order_methods_2D}
\end{figure}

We also confirm the total energy, shown in Figure \ref{fig:energy_secondorder_2D}, is non-increasing during time evolution. Based on the iteration count, we see that BDF2 is the most efficient scheme, as was the case in 1D.

In principle, our method extends to even higher orders of accuracy in both time and space, as well as 3 spatial dimensions, since the splitting error technique in \eqref{eqn:CH_2D_Doperator1} can remove additional splitting errors.
% We will show that our solver can be easily applied to the vector model next in Section \ref{sec:CH_vector_2D}.

%==============================================================================
%	5.2 2D Vector CH equation (VC)
%==============================================================================
\subsection{Semi-discrete solution to 2D vector CH equation} \label{sec:CH_vector_2D}
We now extend our algorithm to vector CH (VCH) model \eqref{eqn:CH_vector}. This model consists of two coupled variables $u_1$ and $u_2$ with local dynamic $\nabla_{\bf u} W$, comprised of partial derivatives of $6^{\text{th}}-$ order polynomials $W(u_1,u_2)$ defined in \eqref{eqn:CH_vector_reaction}.

Before employing the MOL$^\text{T}$ formulation to this system, we translate the vector $(u_1,u_2)$ about its equilibrium state, which is based on the functional $W(u_1,u_2)$, 
$$ (u_1^{\ast},u_2^{\ast}) = \left\{  \left(\cos(\theta_i), \sin(\theta_i)\right)   \,\middle| \, \theta_1 = 0, \theta_2 = \frac{2\pi}{3}, \theta = -\frac{2\pi}{3}  \right\} \equiv \left\{ (1,0), (-\frac{1}{2}, \frac{\sqrt{3}}{2}), (-\frac{1}{2}, -\frac{\sqrt{3}}{2}) \right\}$$
which are cube roots of unity in $(u_1,u_2)$ plane. A straightforward calculation yields the Jacobian of the potential $\nabla_{\bf u} W$ evaluated at these points
\begin{equation} \label{eqn:Jacobian}
 \mathcal{J}_{\nabla_{\bf u}W} (u_1^{\ast},u_2^{\ast})
 \equiv \begin{bmatrix}
       \frac{\partial^2W}{\partial u_1^2}&  \frac{\partial^2W}{\partial u_1u_2}         \\[0.3em]
       \frac{\partial^2W}{\partial u_2u_1} & \frac{\partial^2W}{\partial u_2^2} 
     \end{bmatrix}_{(u_1^{\ast},u_2^{\ast})}
= \begin{bmatrix}
      18 & 0     \\[0.3em]
       0  & 18
     \end{bmatrix},
\end{equation}
hence all points are stable  equilibrium solutions \cite{willoughby2011high}. We subtract ${\bf{u}}=(u_1,u_2)$ of the background state ${\bf{z}}_3 \equiv (-\frac{1}{2}, -\frac{\sqrt{3}}{2}) $, and introduce the new transformed vector
${\bf{v}}=(v_1,v_2) = \bf{u} - \bf{z}_3$ into the original system \eqref{eqn:CH_vector},
\begin{equation} \label{eqn:CH_vector_linearize}
 {\bf{v}}_t = -\epsilon^2 \Delta^2  {\bf{v}}
+ \Delta \nabla_{\bf v} W \left(\bf{v} + \bf{z}_3 \right)
\equiv -\epsilon^2 \Delta^2  {\bf{v}} + \Delta \left( \nabla_{\bf v} \tilde{W} ({\bf{v}})+18 {\bf{v}} \right)
\end{equation}
where $\tilde{W}_{v_1}({\bf{v}}) := W_{v_1}({\bf{v}} + {\bf{z}_3}) - 18v_1$ and  $\tilde{W}_{v_2}({\bf{v}}) := W_{v_2}({\bf{v}} + {\bf{z}_3}) - 18v_2$. 

The backward Euler scheme applied to the transformed system \eqref{eqn:CH_vector_linearize} results in
\begin{align} \label{eqn:CH_vector_BE}
\left( I - 18 \Delta t \Delta + \epsilon^2 \Delta t \Delta^2 \right) {\bf v}^{n+1,k+1} = {\bf v}^n + \Delta \nabla_{\bf v} \tilde{W}^{n+1,k}
\end{align}
where $k$ denotes the iteration index. We again introduce two factorizations of the left-hand side operator:
\begin{align}
	\label{eqn:CH_vector_fixed1}
	\Delta t \geq \frac{\epsilon^2}{81}: \quad & I - 18 \Delta t \Delta + \epsilon^2 \Delta t \Delta^2 = \left(I - \frac{\Delta}{\alpha_1^2} \right) \left(I - \frac{\Delta}{\alpha_2^2} \right), \\
	\label{eqn:CH_vector_fixed2}
	\Delta t < \frac{\epsilon^2}{81}: \quad & I - 2 \sqrt{\epsilon^2 \Delta t}\Delta + \epsilon^2 \Delta t \Delta^2 = \left(I - \frac{\Delta}{\alpha^2} \right)^2,
\end{align}
where
\[
	\frac{1}{\alpha_i^2} = 9\Delta t \pm \sqrt{81\Delta t^2 - \epsilon^2 \Delta t}, \qquad \frac{1}{\alpha^2} = \sqrt{\epsilon^2 \Delta t}.
\]
Inversion of the above operators follows from the same strategy as in Section \ref{sec:CH_2D}. As expected, the stabilized fixed point iteration \eqref{eqn:CH_vector_fixed1} permits larger time steps without limiting the foregoing energy stability property \eqref{eqn:EnergyFunctional_vector}. 
%We will look at numerical results in Section \ref{sec:numerical_vector}.

%==============================================================================
%	Section 6: Time adaptive scheme
%==============================================================================
\section{Adaptive time stepping}
\label{sec:adaptiveTime} 
For phase-field models, adaptive time stepping is a crucial feature for an efficient and accurate numerical solution. For instance, the solution of CH equation \eqref{eqn:CahnHilliard} evolves on various time scales. During \textit{spinodal} evolution, transition layers are developed in $O(1)$ time. Subsequently, they slowly evolve and merge on a longer time scale, $O(e^{C/\epsilon})$ for 1D, which is called \textit{ripening} process. Simulating these phenomena with a fixed time stepping necessarily becomes inefficient, and so we must incorporate adaptive time into our above MOL$^{\text{T}}$ schemes.

The adaptive time step size control is based on the Local Truncation Error(LTE) $\eta_e$ at each time level $t=t^n$. The LTE can be approximated by $\eta_e \approx \eta = ||u^{\ast} - u^n||_\infty$, where $u^{\ast}$ is an explicit predictor solution, typically using the Forward Euler (FE) or Adams Bashforth (AB) schemes \cite{Jaylan}. On the other hand, Richardson extrapolation (known as step-doubling) or embedded Runge-Kutta pairs can also be used \cite{Ong}. Below we will compare LTE estimates based on Richardson extrapolation, as well BDF2 and SDC2 predictors. The time step size selection criteria presented in \cite{Jaylan} will be used, which is summarized in algorithm \ref{algorithm_adaptive} below.
%=========================================================================================== Algorithm 1
\begin{algorithm}[htp]
	\caption{Adaptive time step-size control}
	\label{algorithm_adaptive}
	\begin{enumerate}[itemsep=2mm]
		
		\item Starting at $t=t^n$, approximate the local truncation error. For Richardson extrapolation, the solution is estimated twice: once with a full step $\Delta t$ (denoted by $u^{n+1}_{\Delta t}$), and again with two half steps ($u^{n+1}_{\Delta t/2}$). The difference between the two numerical approximations gives an estimate for the LTE of $u^{n+1}$
		\[
		\text{(Richardson extrapolation)} \quad \eta_e \approx \eta:= \frac{1}{2} || u^{n+1}_{\Delta t} - u^{n+1}_{\Delta t/2} ||_{\infty}.
		\]

		\item Define a tolerance $\sigma_{\text{tol}}$ for the above LTE. If the accuracy criterion is not met ($ \eta > \sigma_{\text{tol}}$), then the time step is reduced. If the desired accuracy is achieved, then we test for the following criteria,
\begin{enumerate}[label=(\Roman*)]
\item $ \eta \leq \sigma_{\text{tol}} : $ 

			\begin{align} \notag
				\frac{N_{\text{it}}}{N_{\max\text{it}}} < 0.7: &\quad \Delta t^{n+1} = \Delta t^n \cdot \min\left(\theta\sqrt{\frac{ \sigma_{\text{tol}}}{\eta}}, \gamma \right), \quad \theta = 0.8, \quad \gamma = 1.3 >1  \\ \notag
				0.7 \leq \frac{N_{\text{it}}}{N_{\max\text{it}}} <1 : &\quad  \Delta t^{n+1} = \Delta t^n \cdot \min\left(\theta\sqrt{\frac{ \sigma_{\text{tol}}}{\eta}}, 1 \right), \\ \notag
				\frac{N_{\text{it}}}{N_{\max\text{it}}} \geq 1: &\quad \text{Step fails.}  \quad \text{Reduce time step } \Delta t^{n} = \Delta t^n\cdot  \frac{1}{\gamma}
			\end{align} 

\hspace{3mm}

\item $ \eta > \sigma_{\text{tol}} : \quad \text{Step fails.}  \quad \text{Reduce time step}$ 
\begin{align} \notag
				\frac{\eta}{\sigma_{\text{tol}}} > 2: &\quad \Delta t^{n} = \Delta t^n \cdot  \frac{1}{\gamma},  \\ \notag
				\frac{\eta}{\sigma_{\text{tol}}} \leq 2: &\quad \Delta t^{n} = \Delta t^n \cdot \theta \sqrt{\frac{ \sigma_{\text{tol}}}{\eta}}
\end{align}
where $\gamma$ and $\theta$ are the same safety factors defined in \cite{Jaylan}.
\end{enumerate}
		
	\end{enumerate}
\end{algorithm}

In practice, this procedure leads to small time steps during spinodal evolution, or at the ripening event, to maintain a consistent LTE. 
On the other hand, during slow coarsening (metastable states), small time steps are unnecessary and so $\Delta t$ is increased within the upper bound for fixed-point iteration count.

%==============================================================================
%	Section 7: Numerical Results
%==============================================================================
\section{Numerical Results}
\label{sec:numerical} 
In this section we present adaptive time stepping results using the previously developed MOL$^\text{T}$ schemes.

%==============================================================================
%	7.1 1D CH model
%==============================================================================
\subsection{1D CH model} \label{sec:numerical_1D} 
We first solve the 1D CH equation \eqref{eqn:CahnHilliard} with a stiff initial condition \eqref{eqn:CH1D_initial} $(\epsilon=0.18)$. 
The second perturbation term of this initial state creates two intervals, $u = -1$ and $u = +1$, which are asymmetric, so that a finite number of transition layers are formed during spinodal evolution. After a long ripening process, such layers are eventually absorbed into one region, at the so-called ripening time \cite{Jaylan}. The aim of this simulation is to accurately capture all time scales using both fixed and adaptive time stepping strategies. (The spatial mesh size is fixed at $\Delta x = \frac{2\pi}{128} \approx 0.05$.)

In the first experiment, we implement our various time stepping schemes, with small fixed time step ($\Delta t = 0.01$). The ripening time $T_{\text{r}}$ is defined as that for which the midpoint value $u(\pi,t)$ changes from positive to negative. The fixed point iteration has residual tolerance $10^{-11}$ at each step, and the ripening times are presented in Table \ref{tab:1DCH_ripening_smalldt}. Our results agree well with the reference time $T_{\text{r}}=8318.63$ in \cite{Jaylan}.
%============================================================================== Table 5
\begin{table}[htbp]
\begin{centering}
\caption{Ripening time of 1D CH equation with small fixed time $(\Delta t=0.01)$}
 \begin{tabular}{rcc}
\hline 
		& $\qquad$     Ripening time	 \\ \hline 
BE$\quad$	& $\qquad$ $8317.81$	\\	
BDF2	& $\qquad$ $8318.70$	 \\	
BDF3	& $\qquad$ $8318.74$	  \\	
SDC2	& $\qquad$ $8318.99$	   \\	 
SDC3	& $\qquad$ $8318.84$            \\ \hline
\end{tabular}
\label{tab:1DCH_ripening_smalldt}
\end{centering}
\end{table}

We also compare the ripening time using several schemes, with larger fixed time steps in Table \ref{tab:1DCH_ripeninglargedt}. 
%============================================================================== Table 6
\begin{table}[htbp]
\begin{centering}
\caption{Ripening time of 1D CH equation with larger fixed time step sizes.}
 \begin{tabular}{rccc}
\hline 
		& $\quad$ time step  & $\qquad$ Ripening time & $\qquad$ Times$(s)$	 \\ \hline 
		& $10$ & $\qquad$ $8250.00$	 & $\qquad$ $313.38$ \\	
BE$\quad$& $1$ & $\qquad$ $8296.00$	& $\qquad$ $351.52$ \\	
		& $0.05$ & $\qquad$ $8311.85$	& $\qquad$ $1312.16$  \\ \hline 
		& $10$ & $\qquad$ $9050.00$	 & $\qquad$ $717.26$ \\	
SDC2	& $1$ & $\qquad$ $8582.00$	& $\qquad$ $692.65$ \\	
		& $0.05$ & $\qquad$ $8319.80$	& $\qquad$ $2779.61$ \\ \hline
		& $10$ & $\qquad$ $8290.00$	 & $\qquad$ $231.40$ \\	
BDF2	& $1$ & $\qquad$ $8303.00$	& $\qquad$ $281.56$ \\	
		& $0.05$ & $\qquad$ $8315.75$	& $\qquad$ $1288.68$ \\ \hline
\end{tabular}
\label{tab:1DCH_ripeninglargedt}
\end{centering}
\end{table}
Among the three methods, BDF2 is the most efficient, and provides better estimates of the true ripening time, even for larger $\Delta t$. In the most extreme instance of $\Delta t = 10$, we note that the first-order scheme (BE) predicts ripening too soon, and that SDC2 is too late; but BDF2 is still fairly accurate. However, to capture the ripening moment accurately, we still require small fixed time steps ($\Delta t \leq 0.05$), which is too expensive for long time simulations.

%============================================================================== Table 7
\begin{table}[htbp]
\begin{centering}
\caption{Ripening time of 1D CH equation with adaptive time stpe size.}
 \begin{tabular}{rccc}
\hline 
			& $\quad$ $\delta_{\text{tol}}$ $\quad$  & $\qquad$ Ripening time & $\qquad$ Times$(s)$	 \\ \hline 
			& $10^{-3}$ & $\qquad$ $8292.54$	 & $\qquad$ $224.03$ \\	
BE$\qquad$	& $10^{-4}$ & $\qquad$ $8276.08$	& $\qquad$ $226.06$ \\	
(Richardson)	& $10^{-5}$ & $\qquad$ $8308.23$	& $\qquad$ $235.44$  \\ \hline 
			& $10^{-3}$ & $\qquad$ $8311.08$	 & $\qquad$ $233.38$ \\	
BE-SDC2		& $10^{-4}$ & $\qquad$ $8311.47$	& $\qquad$ $239.58$ \\	
			& $10^{-5}$ & $\qquad$ $8312.91$	& $\qquad$ $226.43$ \\ \hline
			& $10^{-3}$ & $\qquad$ $8320.03$	 & $\qquad$ $184.71$ \\	
BE-BDF2		& $10^{-4}$ & $\qquad$ $8319.91$	& $\qquad$ $196.82$ \\	
			& $10^{-5}$ & $\qquad$ $8317.87$	& $\qquad$ $208.18$ \\ \hline
\end{tabular}
\label{tab:1DCH_ripening_adaptive}
\end{centering}
\end{table}

Thus, we consider adaptive time stepping for the same problem. As indicated in Section \ref{sec:adaptiveTime}, the local truncation error (LTE) can be approximated with Richardson extrapolation, or a higher order solver such as BDF2 or SDC2.

We implement three methods with the same fixed point residual tolerance $10^{-11}$, $N_{\max\text{it}} = 600$ in Algorithm \ref{algorithm_adaptive}, but with various error tolerance $\delta_{\text{tol}}$. The performance of the time-adaptive scheme is shown in Table \ref{tab:1DCH_ripening_adaptive}. We see that time adaptivity is superior to fixed time stepping both in terms of accuracy and time to solution. Additionally, the BDF2 method is the most efficient predictor.
%We note that we adaptively choose two different corresponding nonlinear iterations depending on the varying time step sizes. For instance, for BE scheme, we choose the scheme \eqref{eqn:CH_BE_fixed2_factor} when  $\Delta t > \epsilon^2$ but switches to \eqref{eqn:CH_BE_fixed_complete} when $\Delta t \leq \epsilon^2$. 
%============================================================================== Figure 3
\begin{figure}[h]
\begin{center}
\centering
    \subfigure[Time step size]{\label{fig:CH1D_time}\includegraphics[width = 0.325\linewidth]{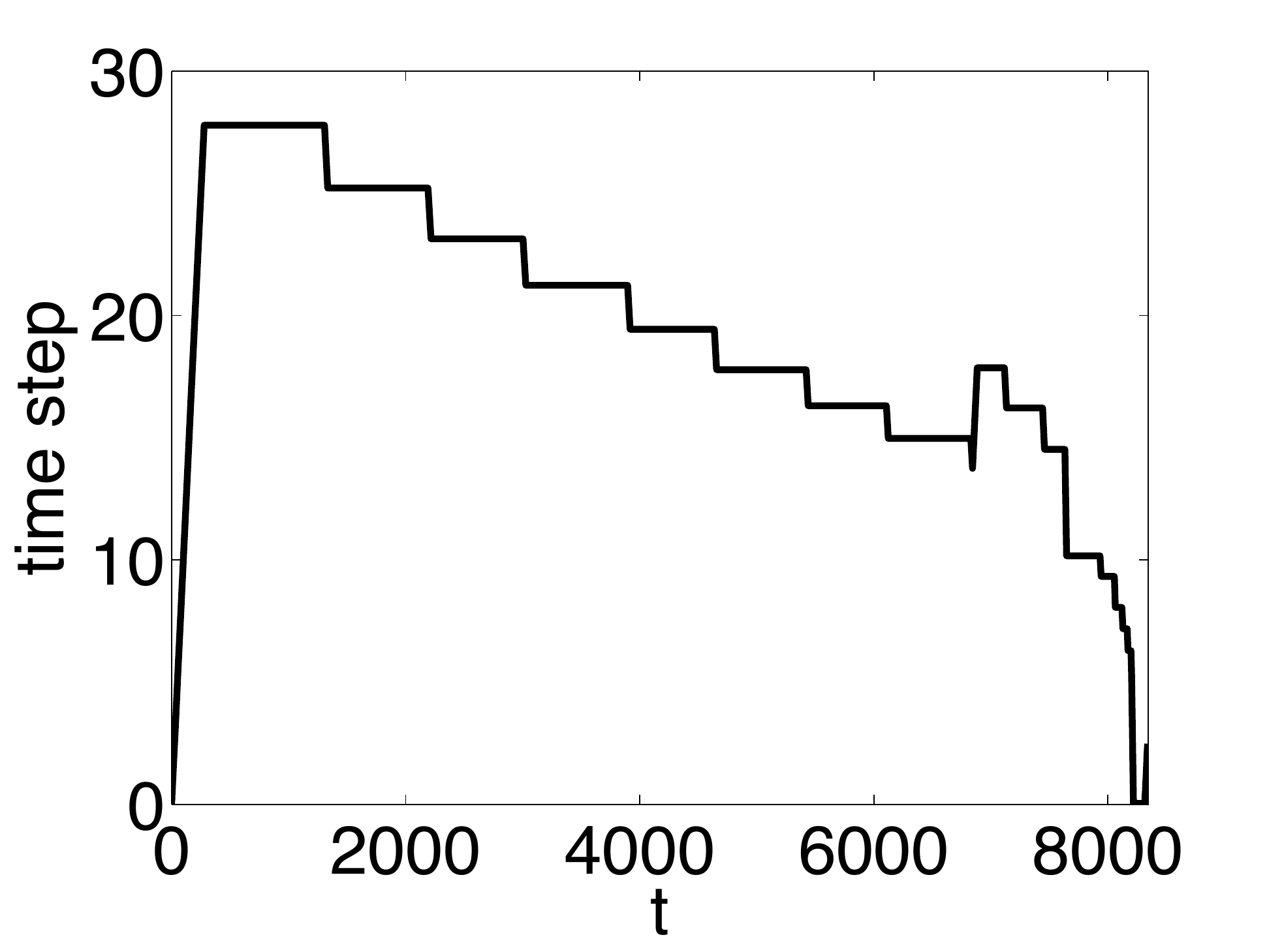}}
    \subfigure[Iteration count]{\label{fig:CH1D_iteration}\includegraphics[width = 0.325\linewidth]{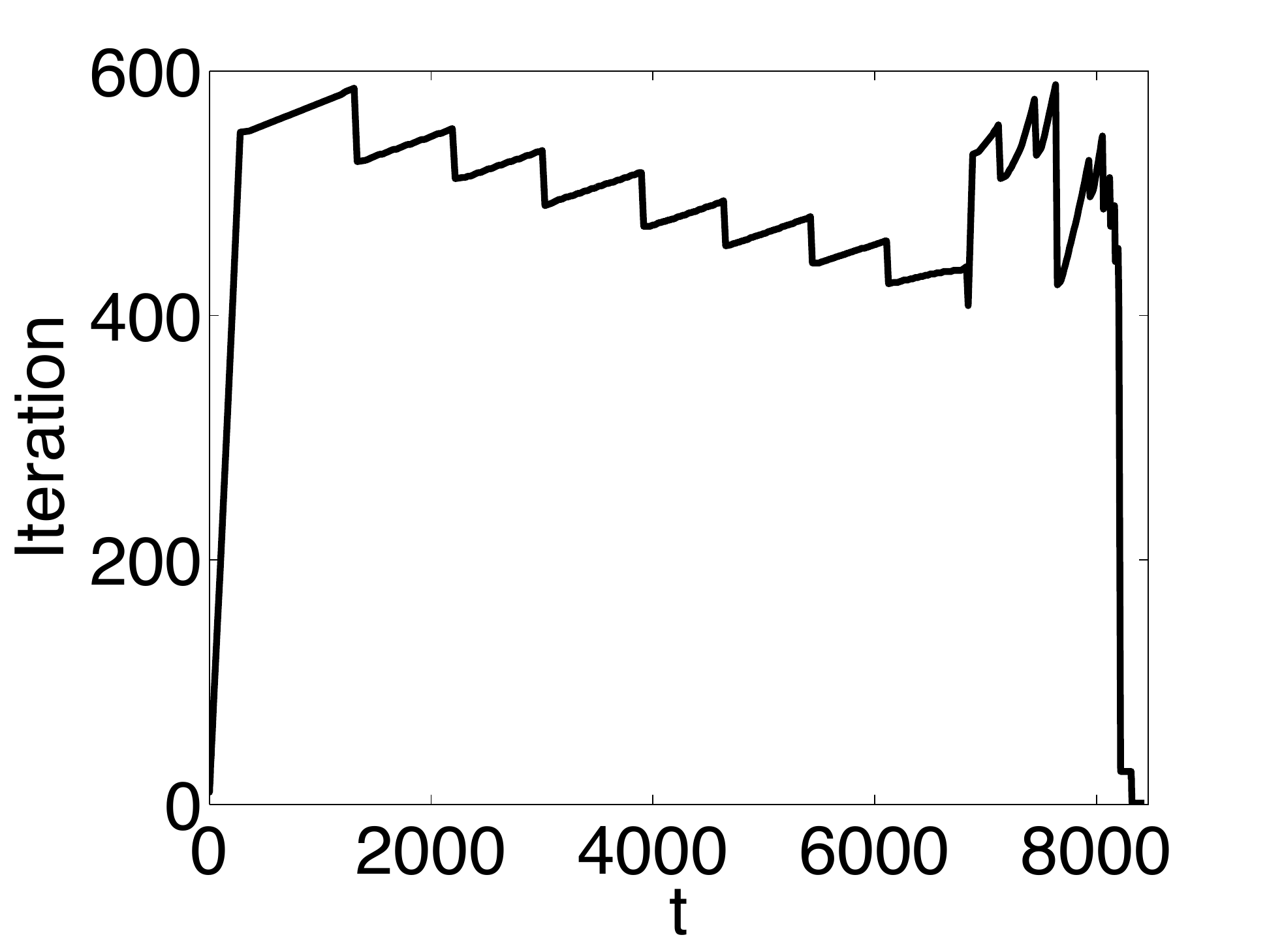}}
    \subfigure[Energy]{\label{fig:CH1D_energy}\includegraphics[width = 0.325\linewidth]{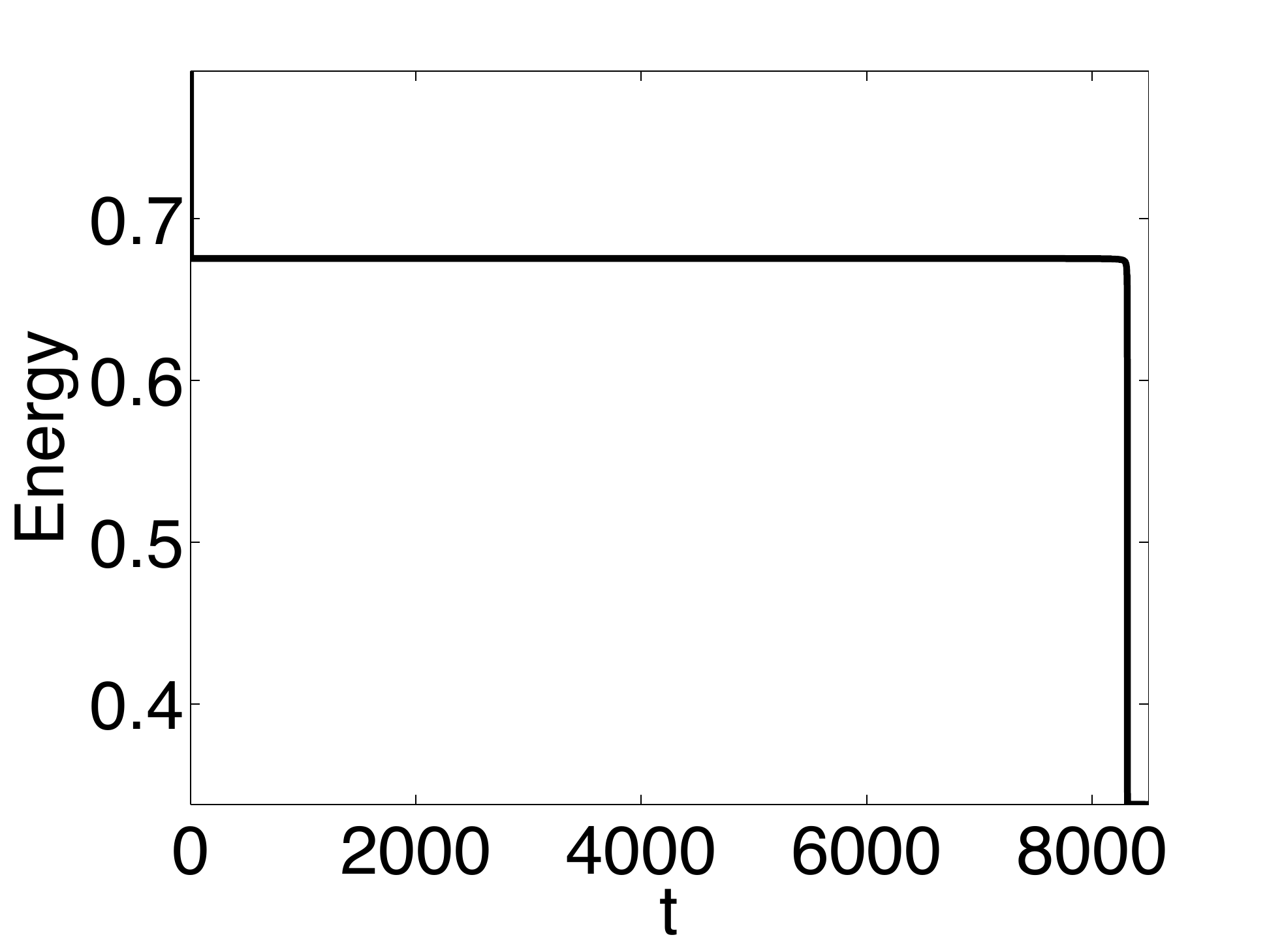}}
    \caption{Adaptive time stepping with the BE-BDF2 strategy.}
    \label{fig:CH_1d_history}
\end{center}
\end{figure}

%============================================================================== Figure 4
\begin{figure}[h!]
	\begin{center}
		\centering
		\subfigure[$u(x,0)$]{\label{fig:CH1D_u0}\includegraphics[width = 0.3\textwidth]{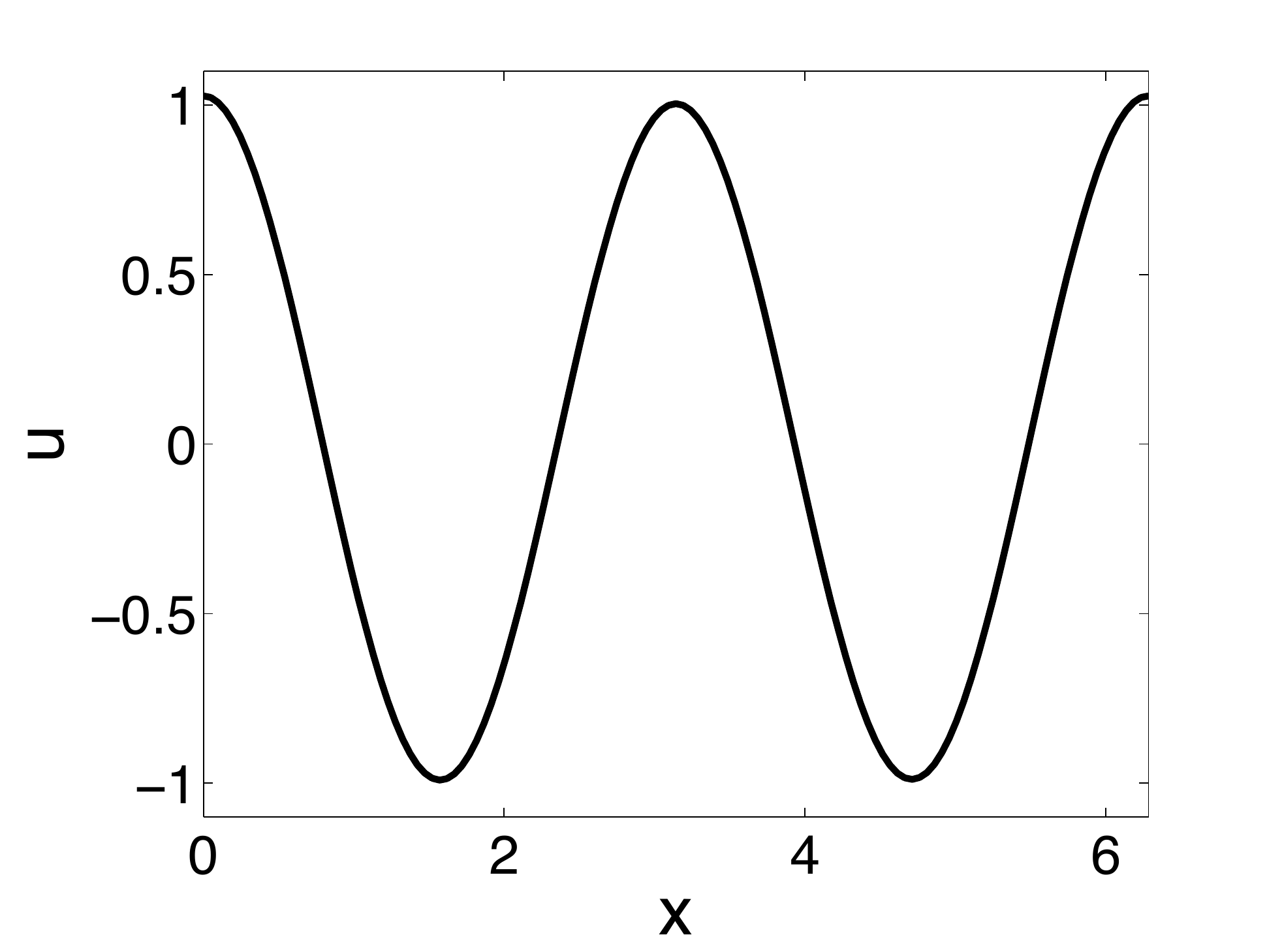}}
		\subfigure[$u(x,0.51)$]{\label{fig:CH1D_u1}\includegraphics[width = 0.3\textwidth]{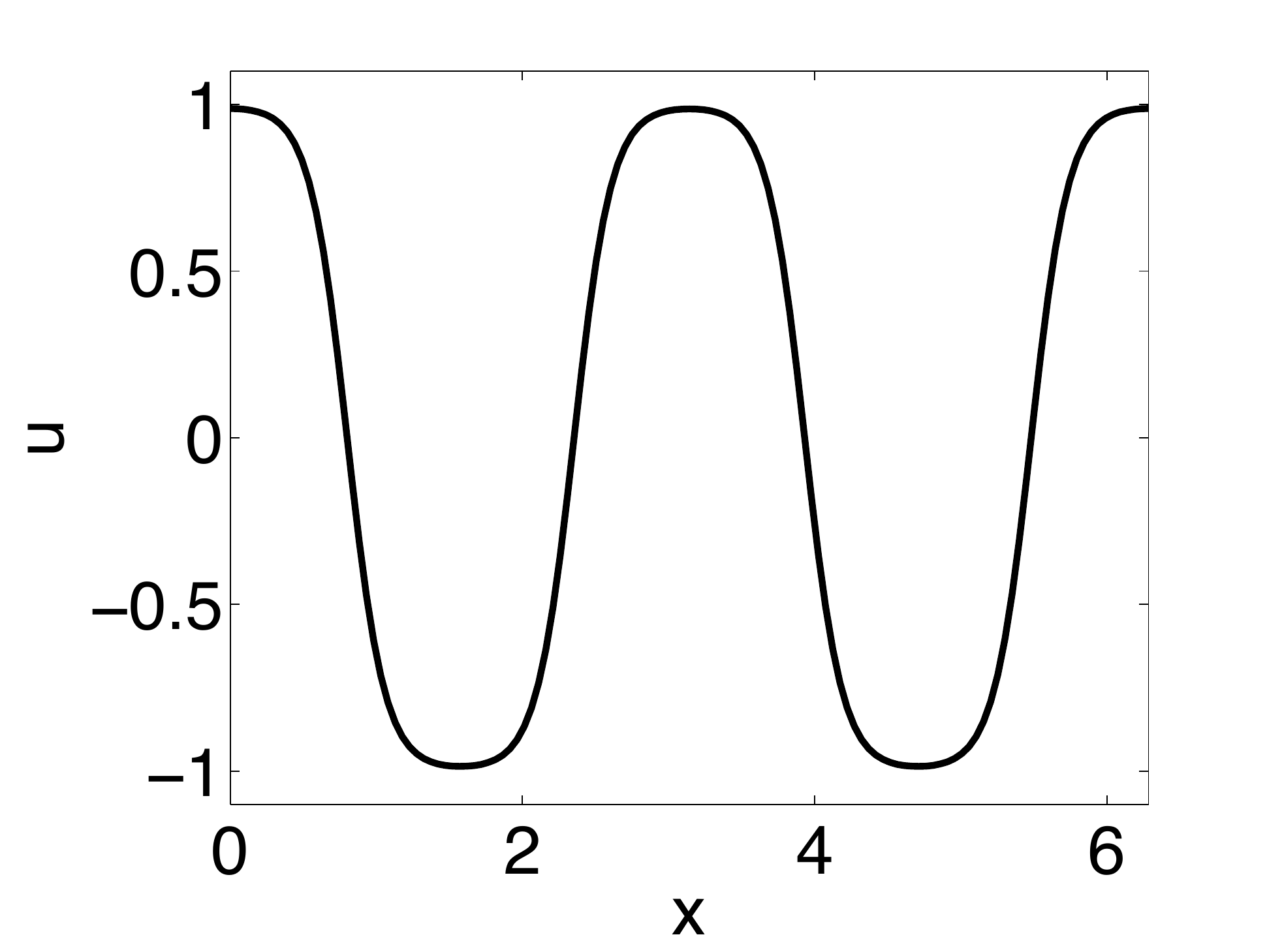}}
		\subfigure[$u(x,3669.8)$]{\label{fig:CH1D_u2}\includegraphics[width = 0.3\textwidth]{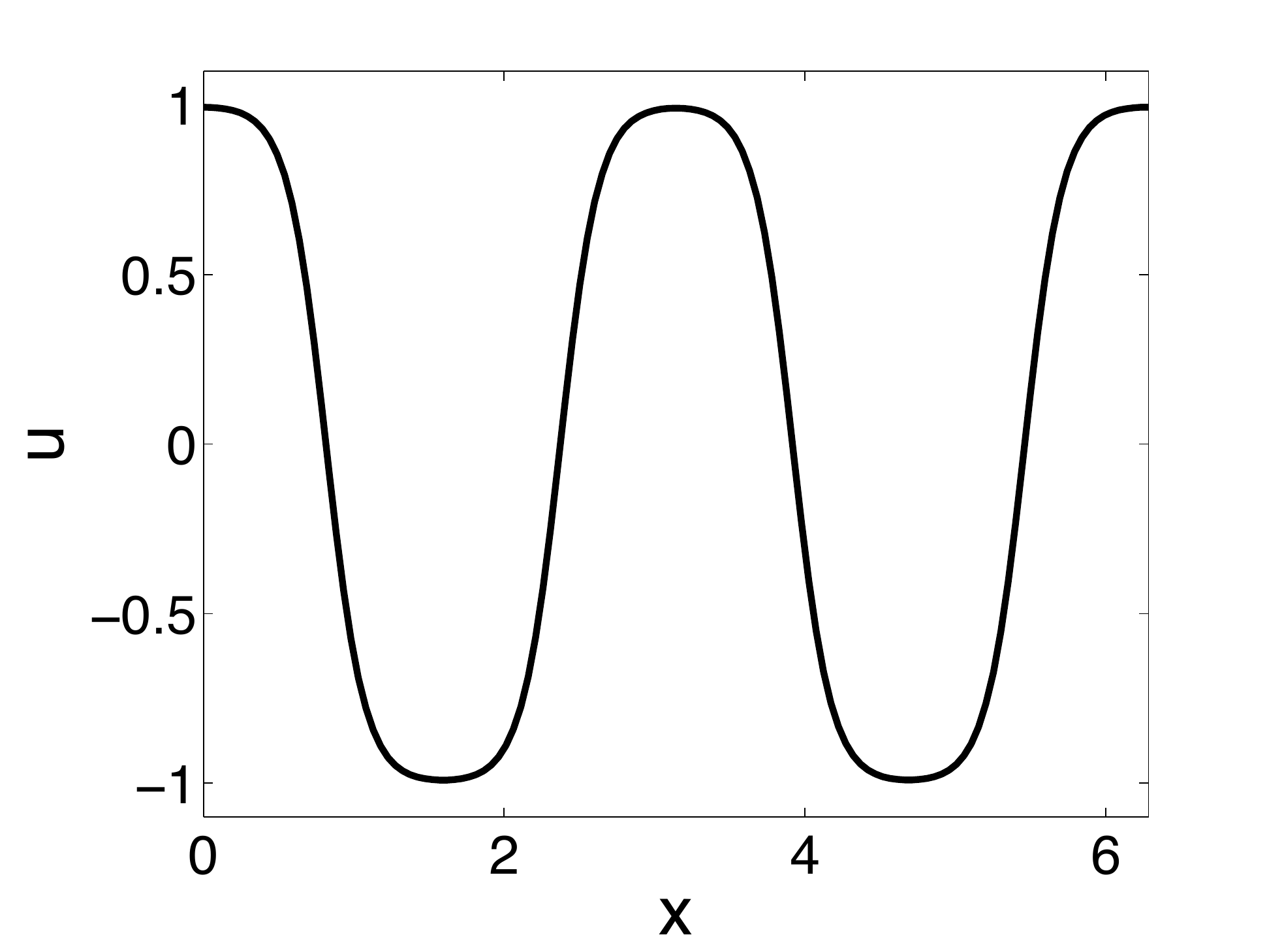}} \\
		\subfigure[$u(x,7005.7)$]{\label{fig:CH1D_u3}\includegraphics[width = 0.3\textwidth]{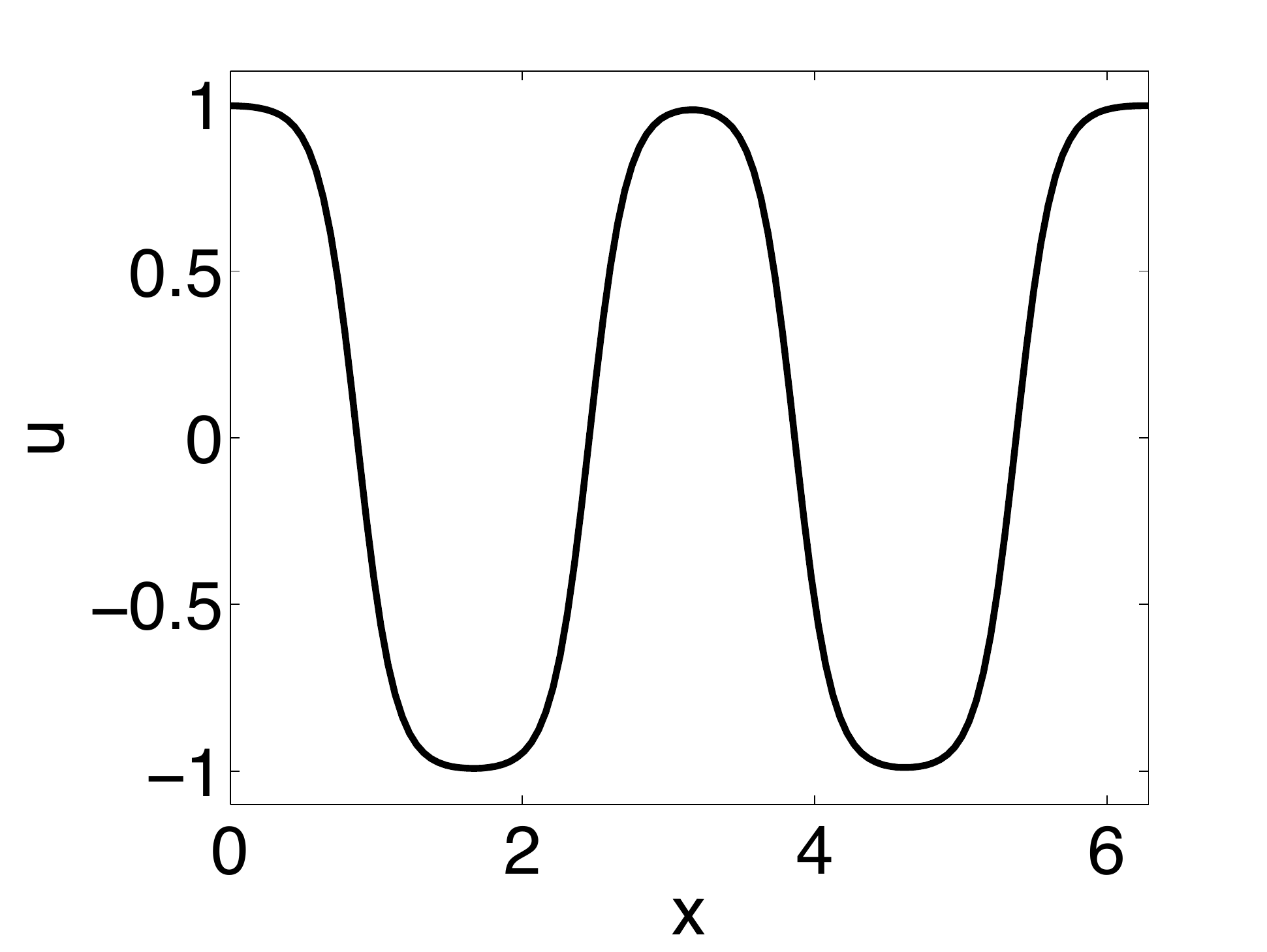}}
		\subfigure[$u(x,8317.9)$]{\label{fig:CH1D_u4}\includegraphics[width = 0.3\textwidth]{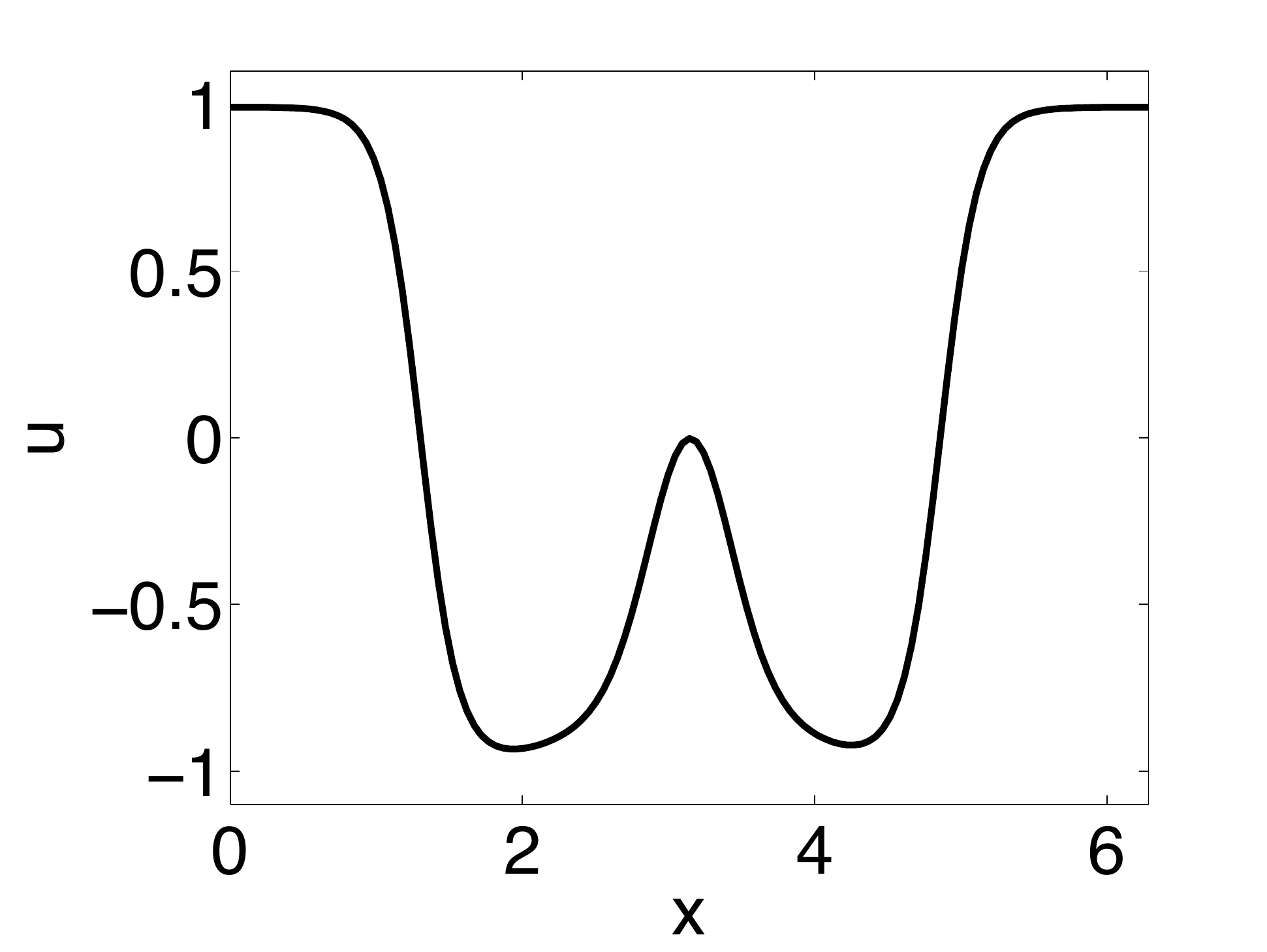}}
		\subfigure[$u(x,8319.2)$]{\label{fig:CH1D_uT}\includegraphics[width = 0.3\textwidth]{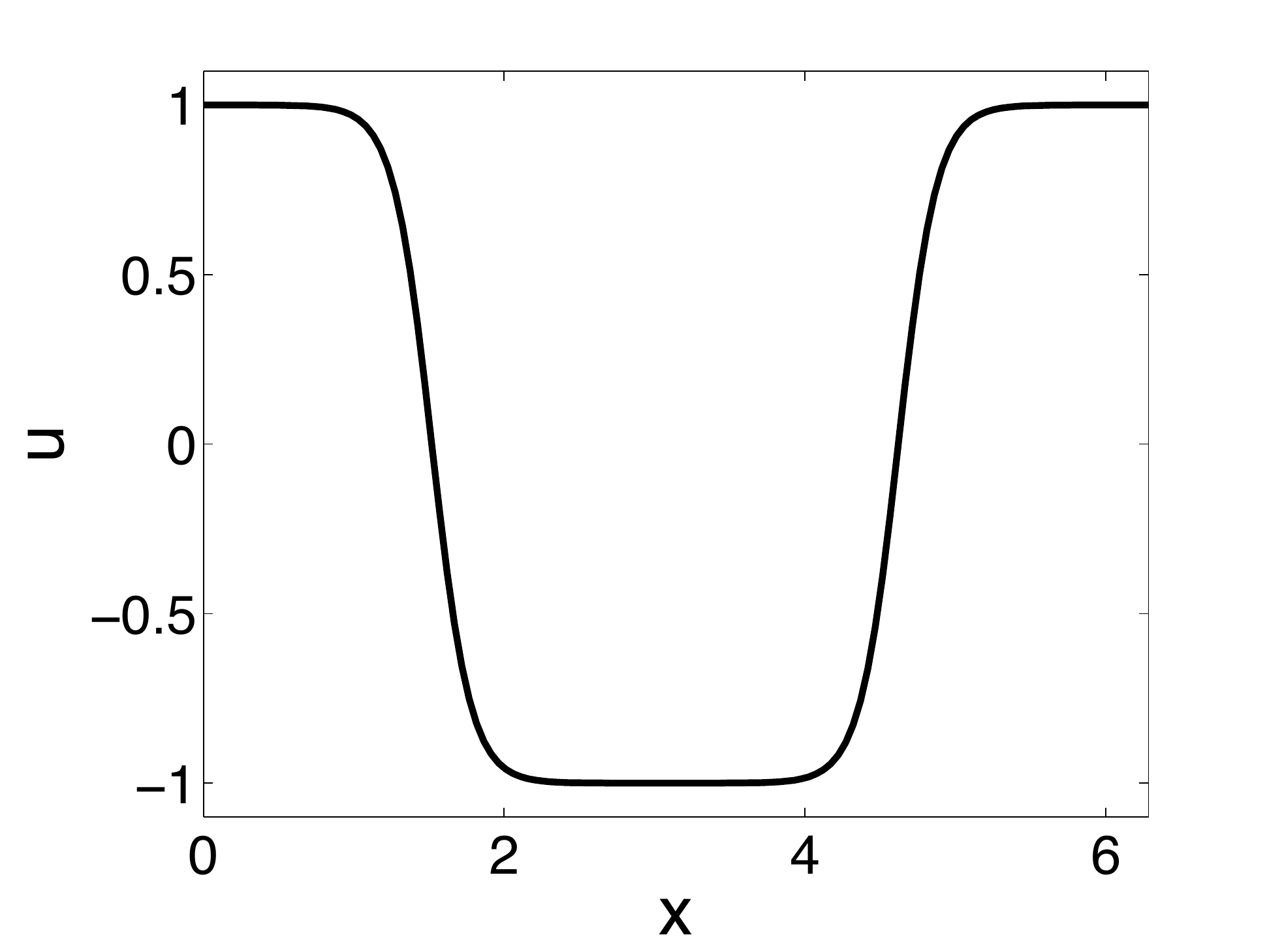}}
		\caption{Spinodal evolution for the 1D CH equation using adaptive time stepping (BE-BDF2).}
		\label{fig:CH_1d_ripening}
	\end{center}
\end{figure}
In Figure \ref{fig:CH_1d_history} we plot the time step size, fixed-point iteration history and energy of numerical solutions obtained by the BE-BDF2 strategy ($\delta_{\text{tol}} = 10^{-5}$). As shown in Figure \ref{fig:CH1D_time}, small time steps are used at early stage (spinodal evolution) but increase when the coarsening process starts, which speeds up the simulation without a loss of accuracy.  If the iteration count is too large ($N_{\text{it}}\geq N_{\max\text{it}}$), we reject the solution and compute $u(x,t)$ again with reduced $\Delta t$. We see this behavior in Figure \ref{fig:CH1D_iteration}, where time steps are decreased whenever $N_{\text{it}} \approx N_{\max\text{it}}$. We also observe in Figure \ref{fig:CH1D_energy} that adaptive time stepping does not affect energy decay.

The phase function $u(x,t)$ obtained by our numerical scheme is shown in Figure \ref{fig:CH_1d_ripening}. The initial state \ref{fig:CH1D_u0} quickly moves to the metastable state \ref{fig:CH1D_u1}, and then finally reaches the stable state \ref{fig:CH1D_uT} at which two layers are merged together after a long time. This simulation also shows that the ripening event happens over a very fast time scale in Figure \ref{fig:CH1D_u4}.

%==============================================================================
%	7.2 2D CH model
%==============================================================================
\subsection{2D CH model}  \label{sec:numerical_2D} 
We next solve the CH equation \eqref{eqn:CahnHilliard} in two spatial dimensions. The parameters are 
$$\epsilon = 0.18,\quad \Delta x = \Delta y = \frac{2\pi}{128} \approx 0.0491, \quad  N_{\max\text{it}} = 300, \quad N_{\text{tol}} = 10^{-6},$$ 
and the initial condition is \eqref{eqn:CH2D_initial}. We first use BE, BDF2, and BDF3 with a small time step ($\Delta t = 0.05$) and check the ripening time, which is defined such that $u(\frac{\pi}{2},\frac{\pi}{2})$ changes from positive to negative. Based on the results summarized in Figure \ref{test:2DCH_ripening_smalldt}, we can define such ripening time as $T_{\text{r}} = 80.10$. In addition, we note that raising the order of the scheme reduces the number of iterations per time step, and so the overall computational time is lower.

%============================================================================== Figure 5
\begin{figure}[h]
  \begin{minipage}{0.45\linewidth}
    \centering
	\includegraphics[width = 0.8\linewidth]{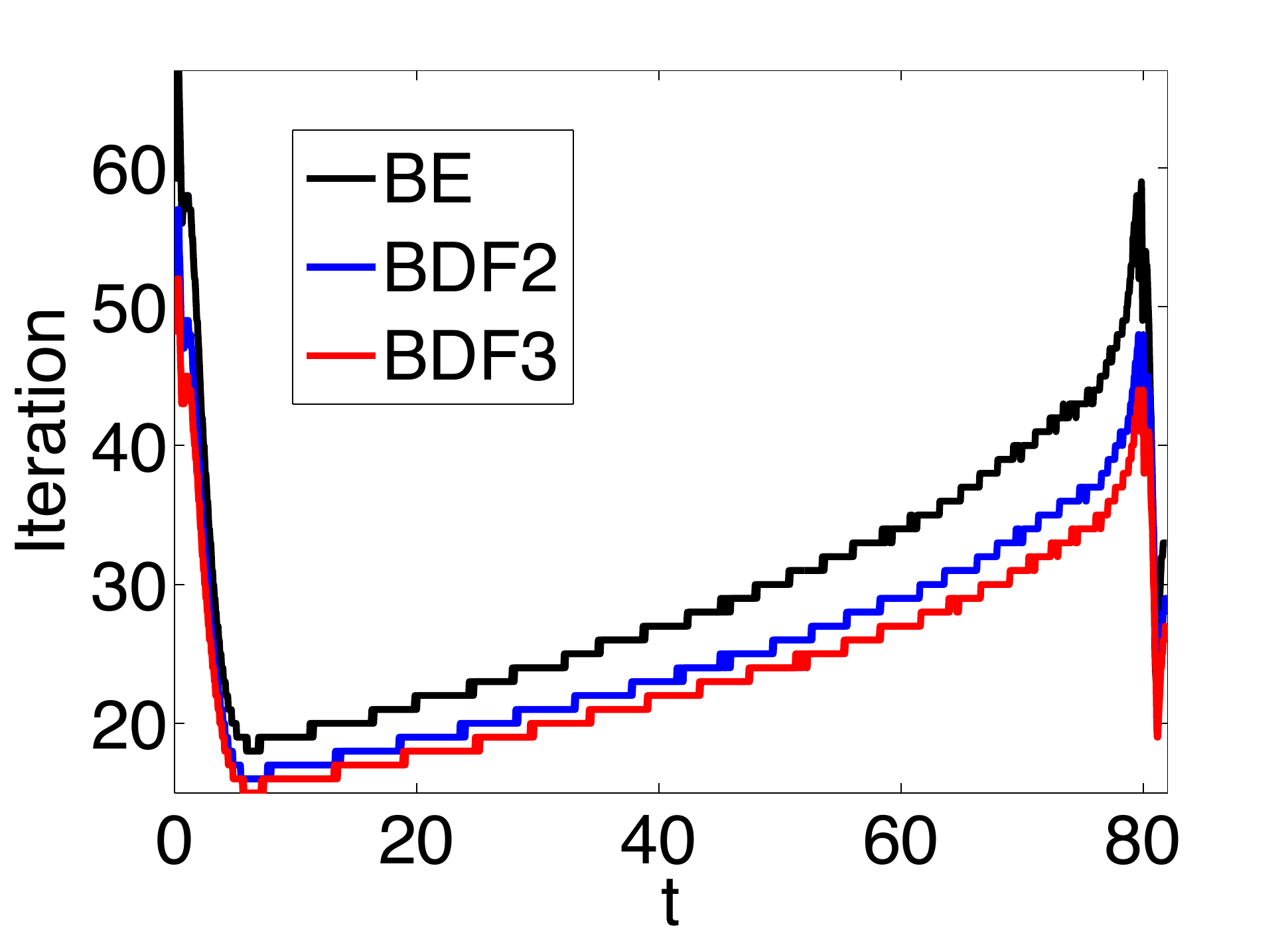}
    %rule{6cm}{6cm} %to simulate an actual figure
  \end{minipage}%
  \begin{minipage}[b]{0.5\linewidth}
    \centering
 \begin{tabular}{rcc}
\hline 
		& $\qquad$     Ripening time & $\qquad$ Run Time $(s)$ \\ \hline 
BE$\quad$	& $\qquad$ $79.95$	& $\qquad$ $630.16$\\	
BDF2	& $\qquad$ $80.10$	& $\qquad$ $526.19$ \\	
BDF3	& $\qquad$ $80.10$	 & $\qquad$ $481.86$ \\	 \hline
\end{tabular}
\end{minipage}
\caption{Ripening time of 2D CH equation with small fixed time $(\Delta t=0.01)$}
\label{test:2DCH_ripening_smalldt}
\end{figure}

%============================================================================== Figure 6
\begin{figure}[h]
	\begin{center}
		\centering
		\subfigure[Time step size]{\label{fig:CH2D_time}\includegraphics[width = 0.32\linewidth]{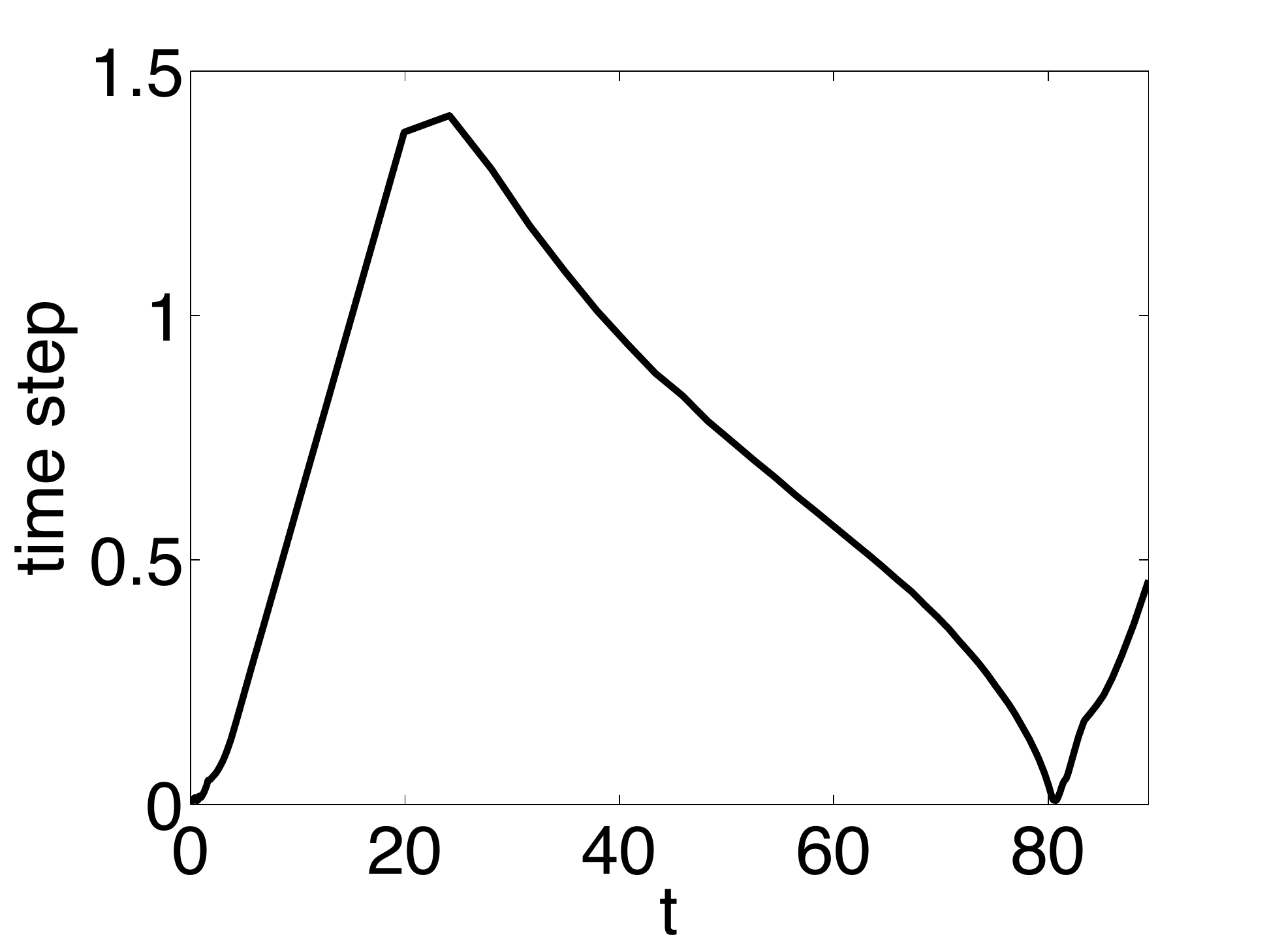}}
		\subfigure[Iteration count]{\label{fig:CH2D_iteration}\includegraphics[width = 0.32\linewidth]{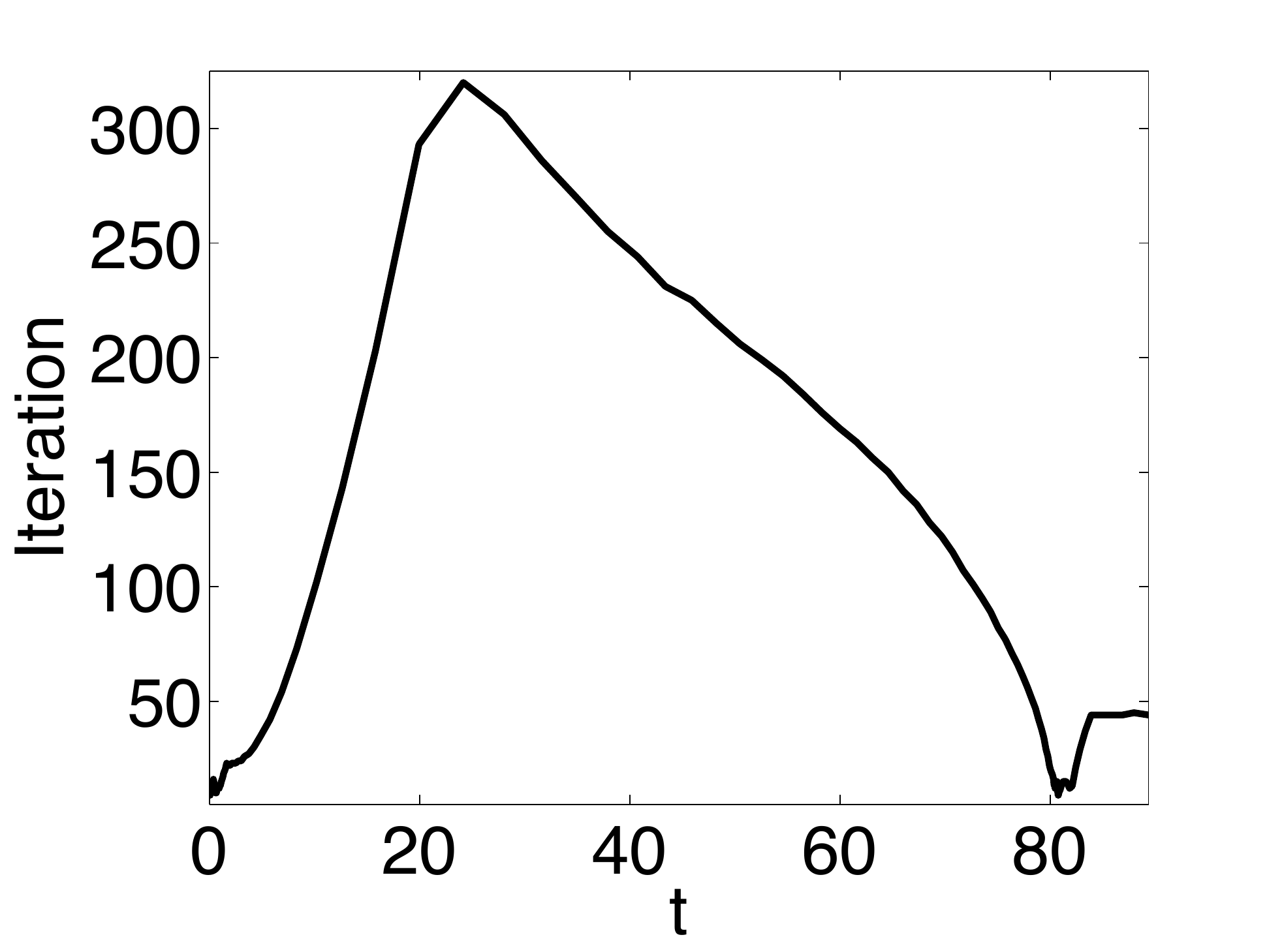}}
		\subfigure[Energy]{\label{fig:CH2D_energy}\includegraphics[width = 0.32\linewidth]{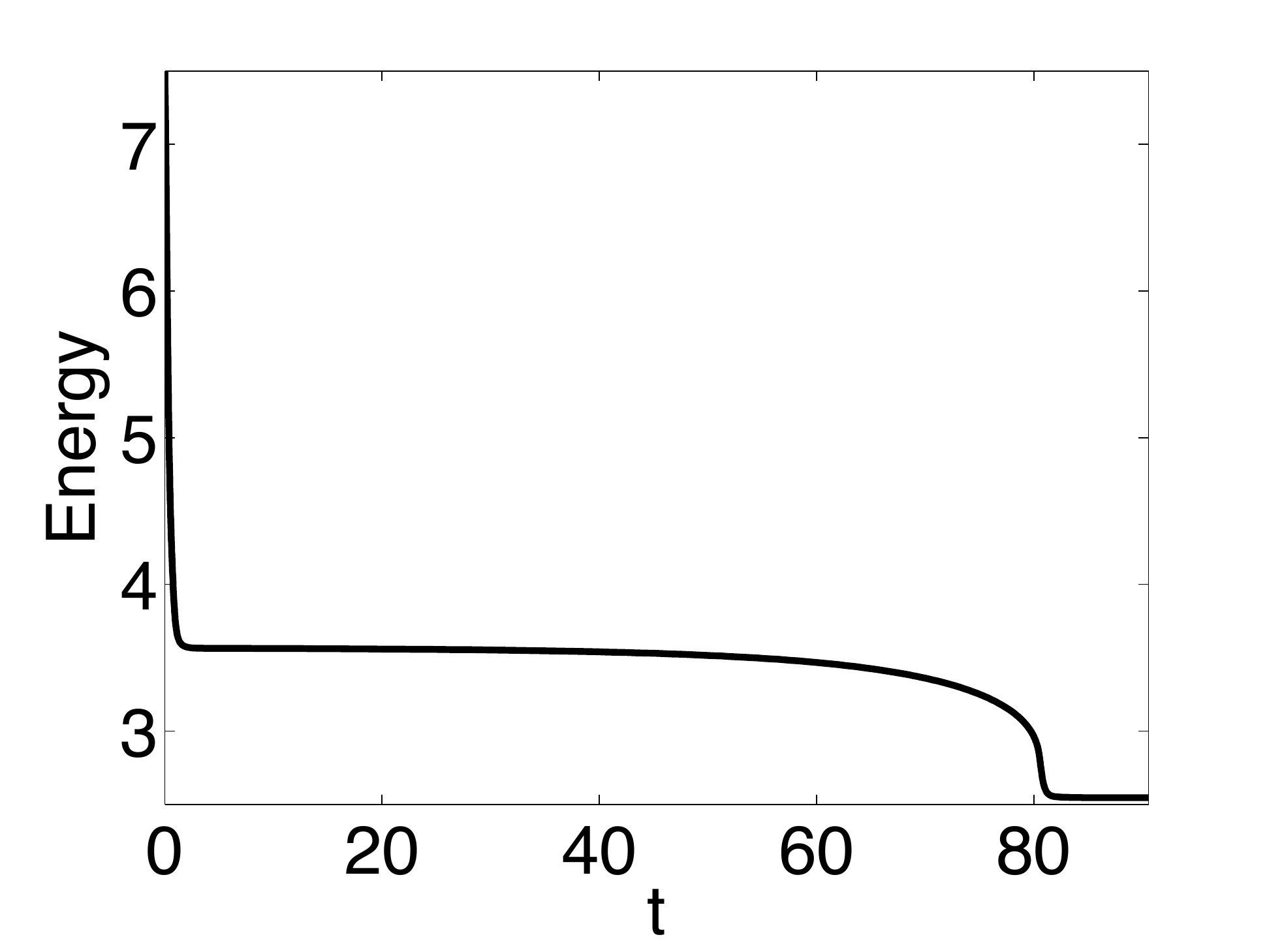}}
	    \caption{Adaptive time stepping for 2D, with the BE-BDF2 strategy.}
		\label{fig:CH_2d_history}
	\end{center}
\end{figure}
Next, we simulate the same problem with adaptive time step method (BE-BDF2). The time step history, number of nonlinear iterations, and energy history are presented in Figure \ref{fig:CH_2d_history}. The 2D results are comparable to those from 1D in Section \ref{sec:numerical_1D}, in that larger time step sizes are used during coarsening, and smaller steps are required only to capture ripening (see Figure \ref{fig:CH2D_time}). The energy in Figure \ref{fig:CH2D_energy} indicates that there are two sharp transitions in the energy $\mathcal{E}$; early on, and at ripening. Finally, we also observe the desired energy decay.
%============================================================================== Figure 7
\begin{figure}[h!]
\begin{center}
\centering
    \subfigure[$u(x,y,0)$]{\label{fig:CH2D_u0}\includegraphics[width=0.32\linewidth]{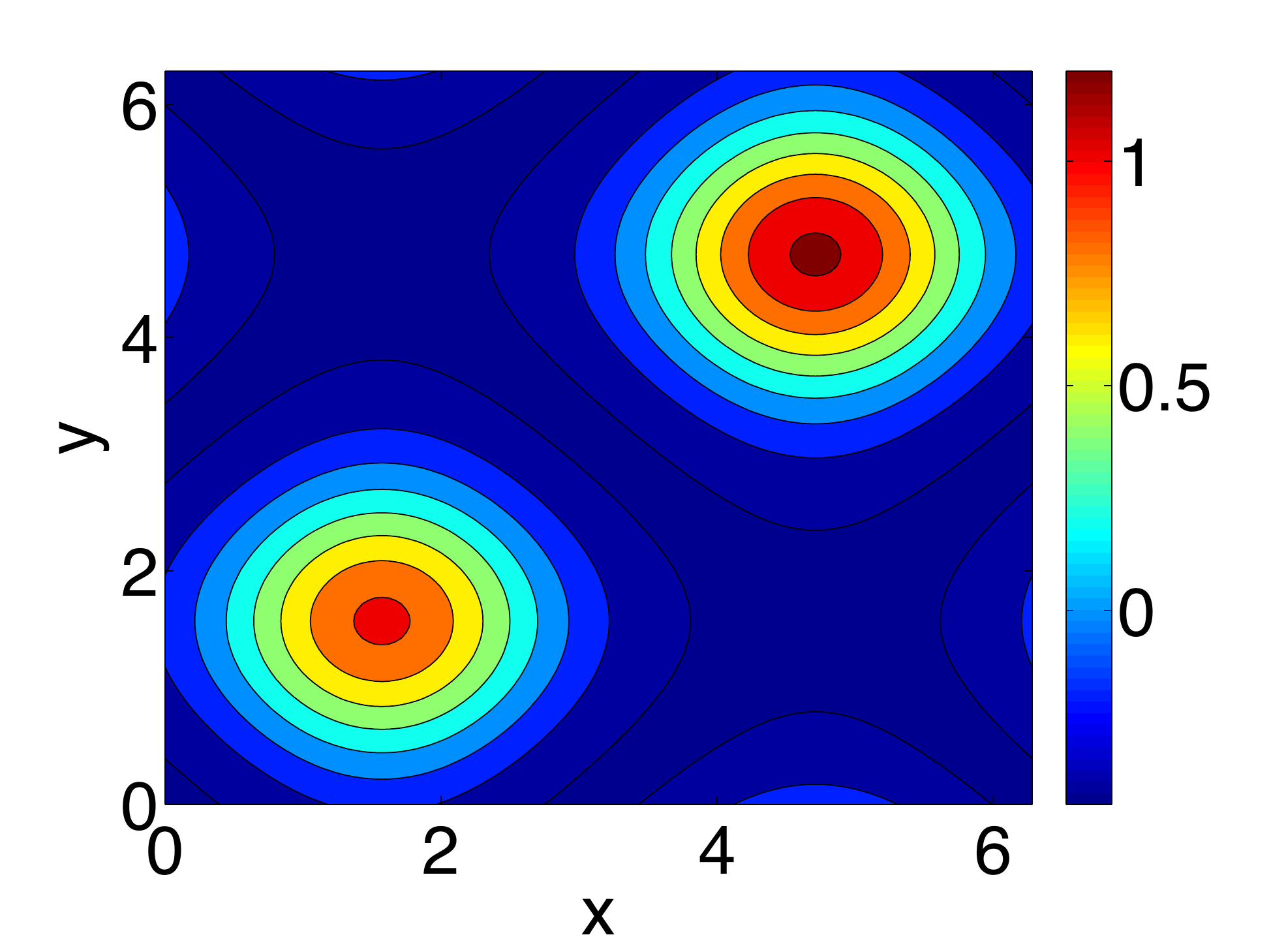}}
    \subfigure[$u(x,y,0.5083)$]{\label{fig:CH2D_u1}\includegraphics[width=0.32\linewidth]{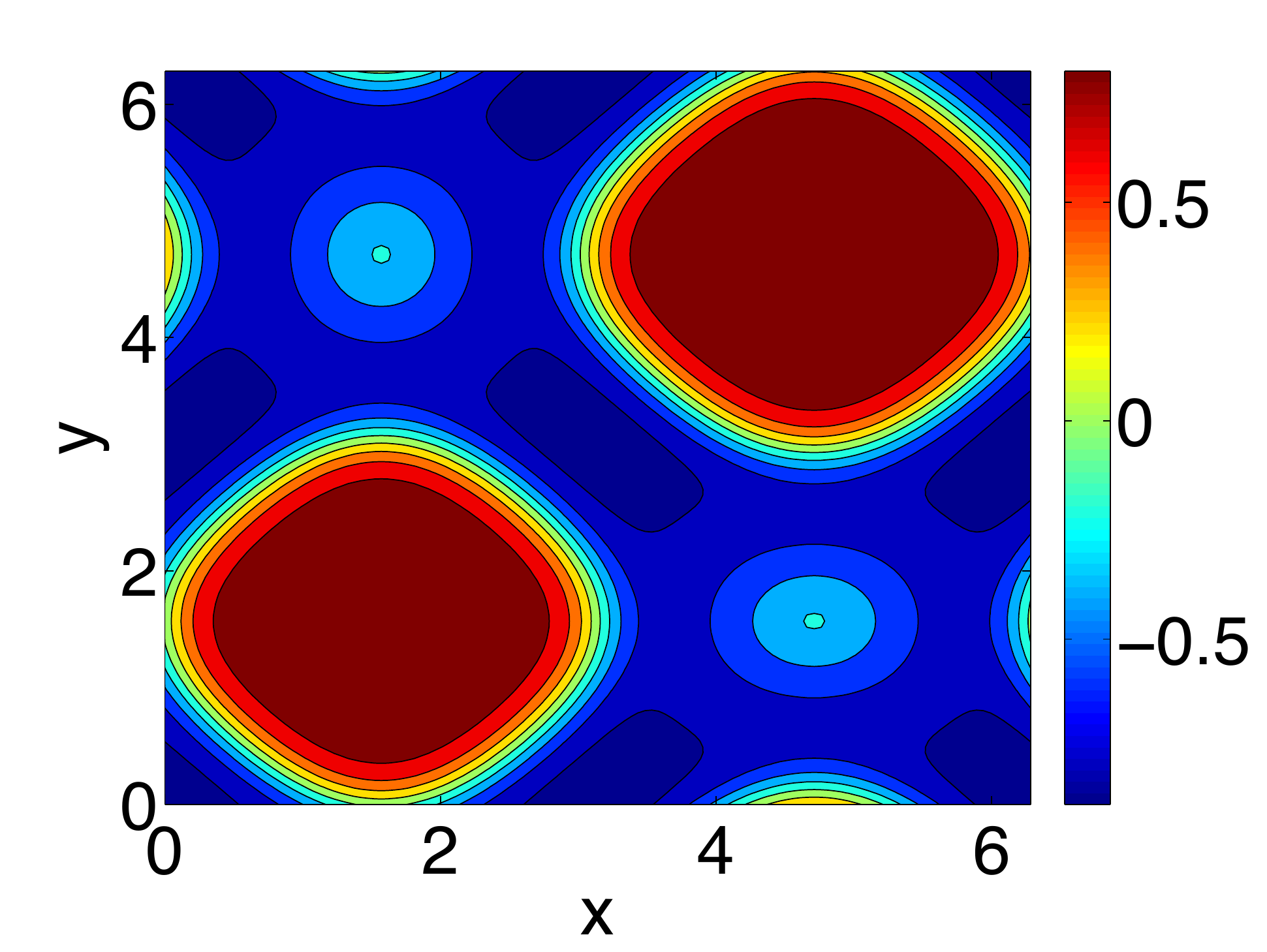}} 
    \subfigure[$u(x,y,15.807)$]{\label{fig:CH2D_u2}\includegraphics[width=0.32\linewidth]{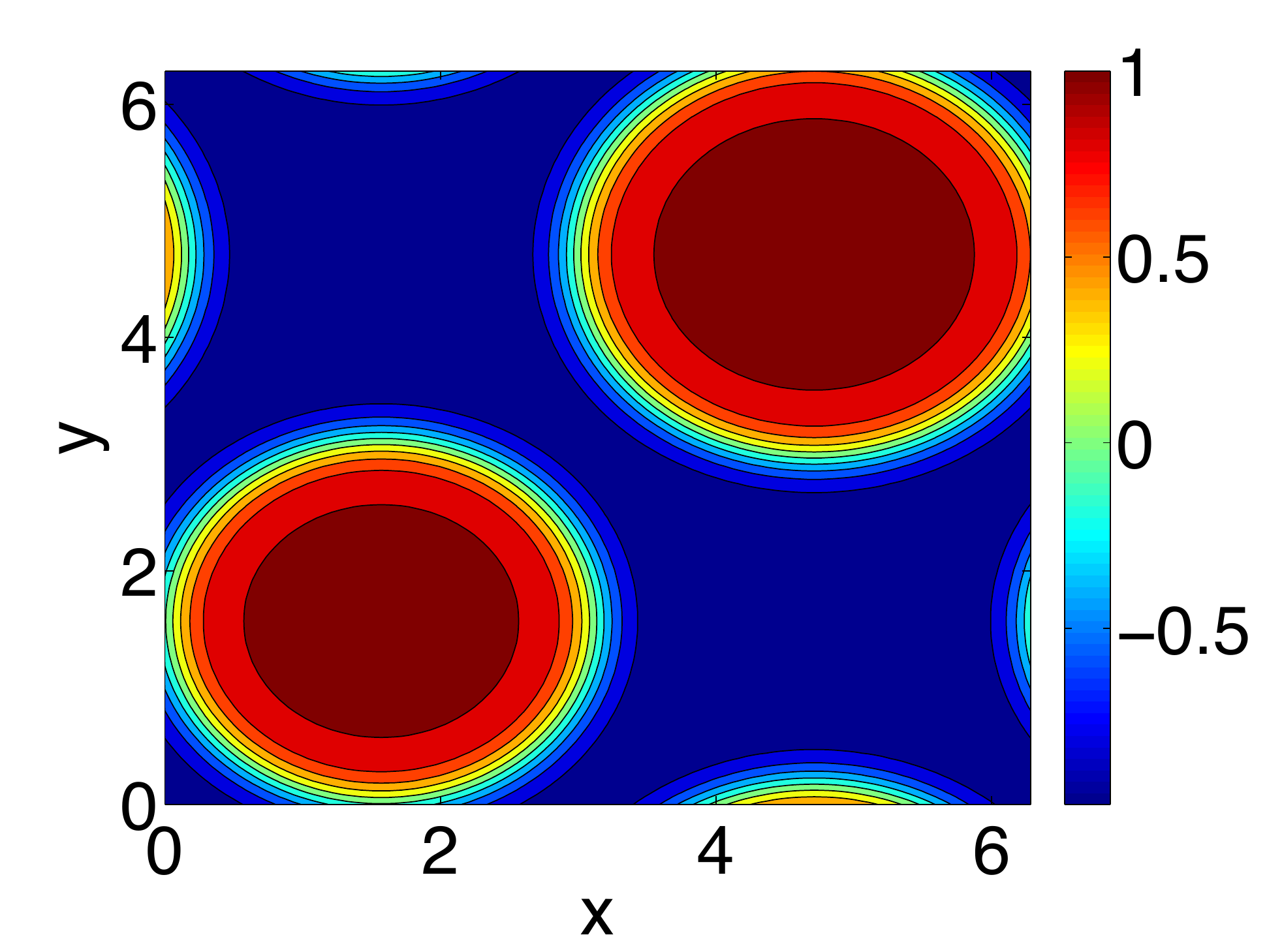}}\\
    \subfigure[$u(x,y,70.015)$]{\label{fig:CH2D_u3}\includegraphics[width=0.32\linewidth]{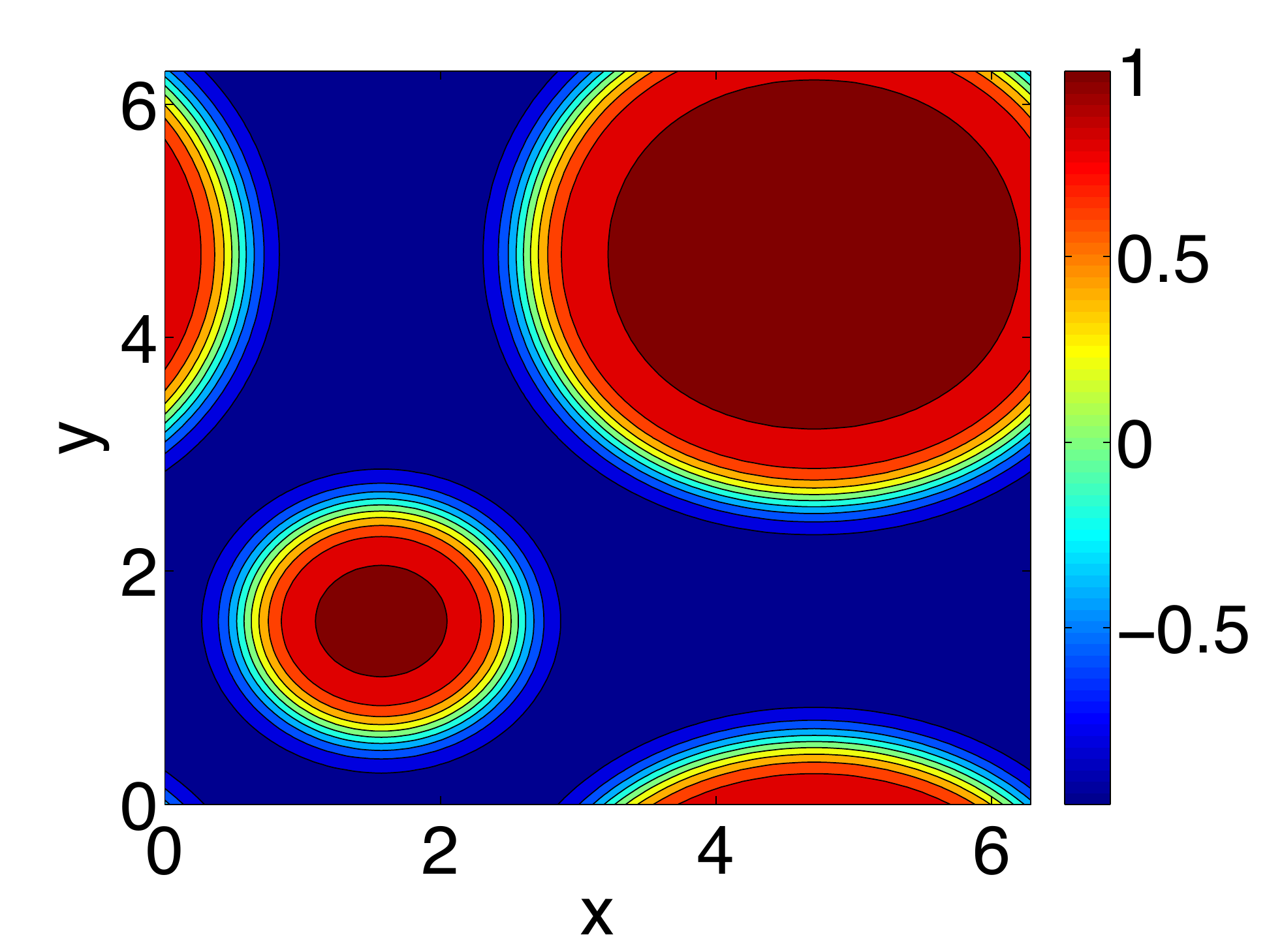}}
    \subfigure[$u(x,y,80.536)$]{\label{fig:CH2D_u4}\includegraphics[width=0.32\linewidth]{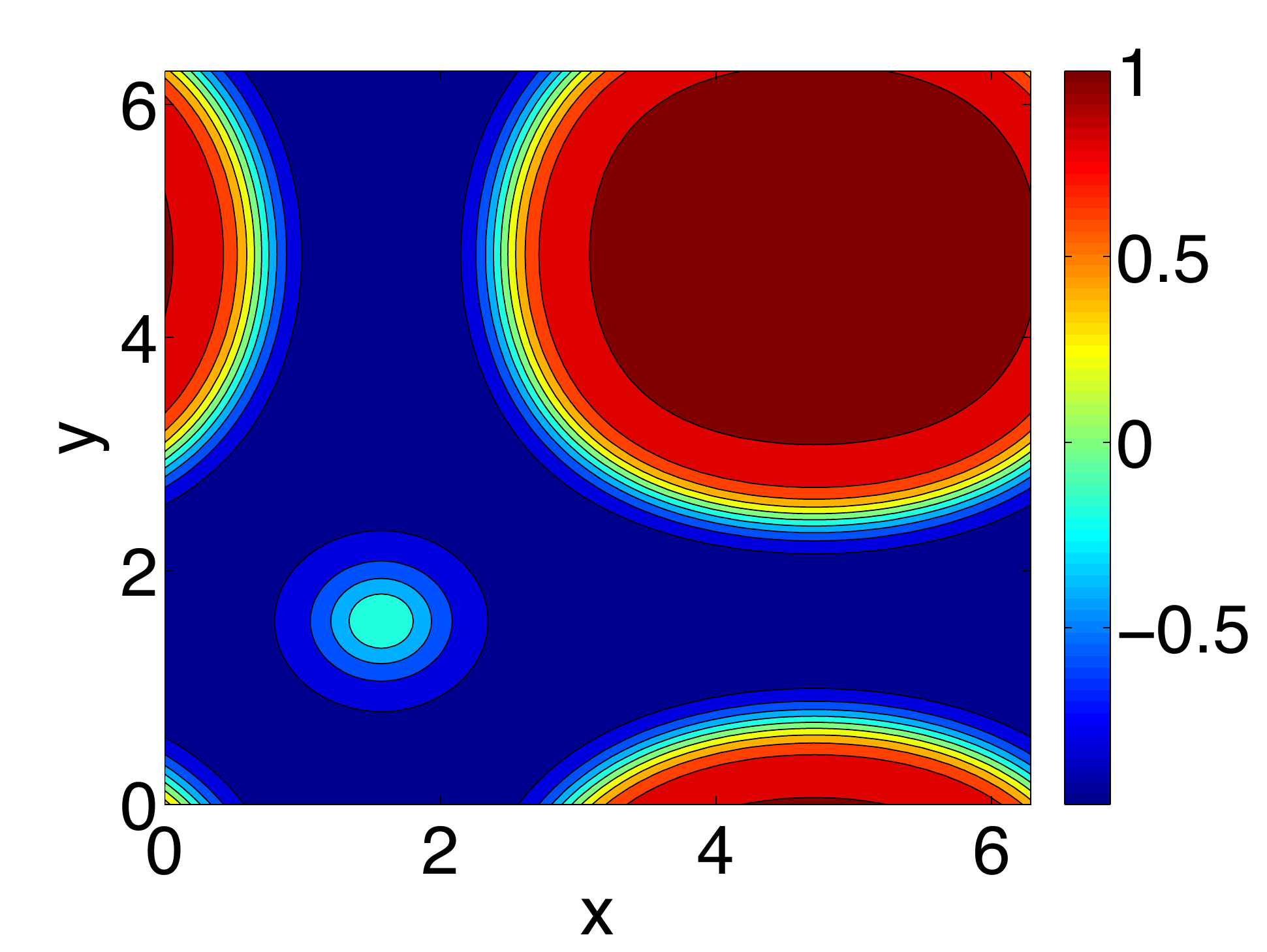}}
    \subfigure[$u(x,y,90)$]{\label{fig:CH2D_uT}\includegraphics[width=0.32\linewidth]{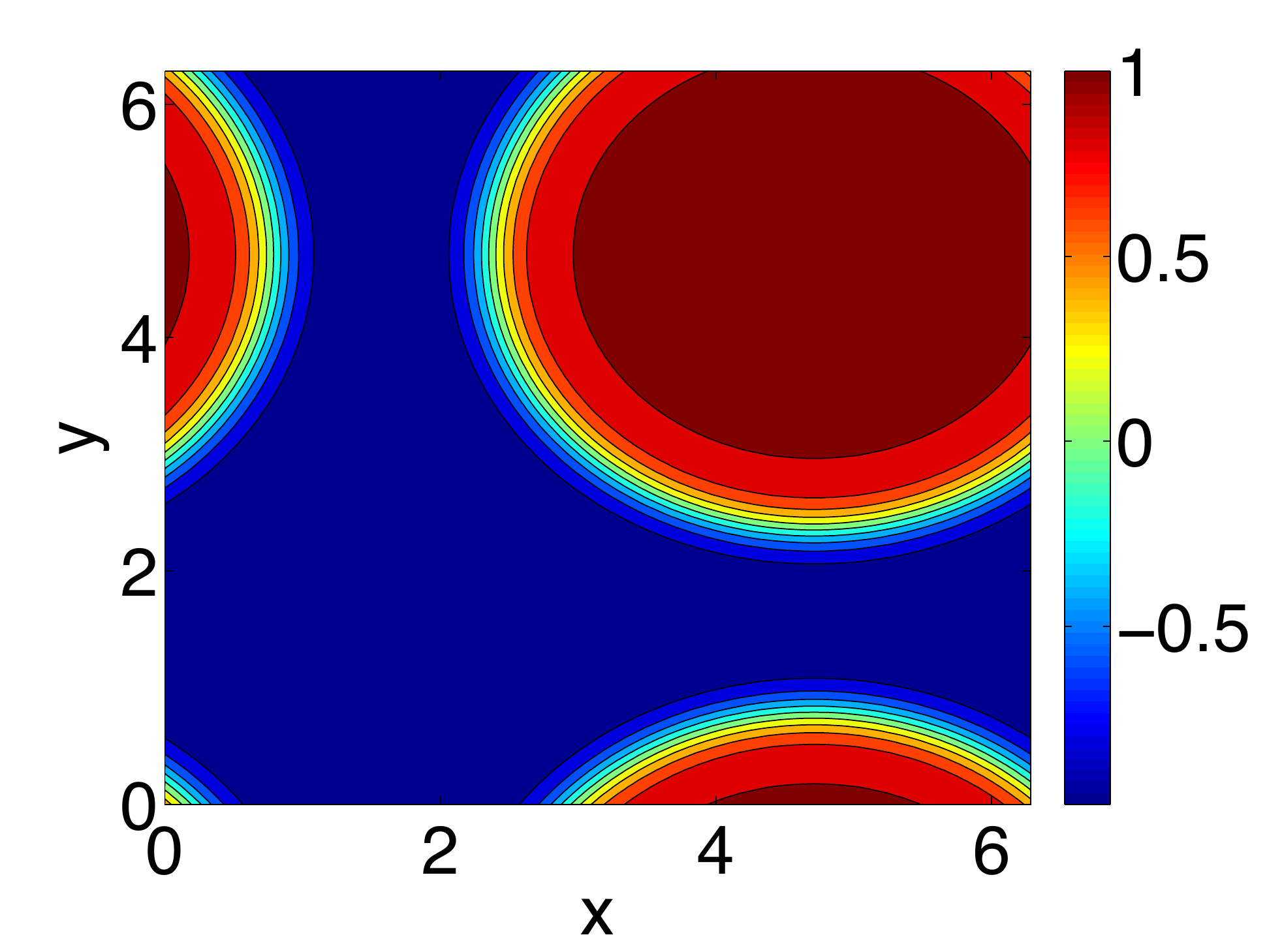}} 
 \caption{Temporal evolution of the 2D CH solution with initial condition \eqref{eqn:CH2D_initial}.}
    \label{fig:CH2D}
\end{center}
\end{figure}

The contour plots of time evolution of the phase function $u(x,y,t)$ are shown in Figure \ref{fig:CH2D}. The initial states \ref{fig:CH2D_u0} quickly reaches the metastable state, where we see two circular formations. Eventually the larger one absorbs the smaller, although over a very long time scale, shown in \ref{fig:CH2D_u2} - \ref{fig:CH2D_u4}. The final state is shown in \ref{fig:CH2D_uT}, where the larger region has fully consumed the smaller. In all plots, the total volume is preserved.

%==============================================================================
%	7.2 2D VCH model
%==============================================================================
\subsection{2D CH vector model}  \label{sec:numerical_vector} 
We next apply our scheme from Section \ref{sec:CH_vector_2D} to the 2D CH system \eqref{eqn:CH_vector}, with our adaptive time stepping strategy. 
%\todo{Did you mean the adaptive time stepping strategy?}. 
We use the same initial condition \eqref{eqn:CH2D_initial} for $u_1$ and 
\begin{equation}\label{eqn:CH2D_initial_v0}
u_2(x,y,0) = \sin(y), \qquad (x,y) \in [0,2\pi]^2
\end{equation}
for $u_2$ in the reference \cite{Jaylan}, and observe the long time behavior of the phase function ${\bf u} = (u_1,u_2)$. With the parameters 
$$\epsilon = 0.32, \Delta x = \Delta y = \frac{2\pi}{64} \approx 0.0982,  \quad  N_{\max\text{it}} = 400, \quad N_{\text{tol}} = 10^{-6}, \quad \delta_{\text{tol}} = 1e-4$$ 
we implement BE-BDF2 adaptive scheme for VCH model. 
%============================================================================== Figure 8
\begin{figure}[h]
\begin{center}
\centering
    \subfigure[Time step size]{\label{fig:VCH2D_time}\includegraphics[width = 0.3\linewidth]{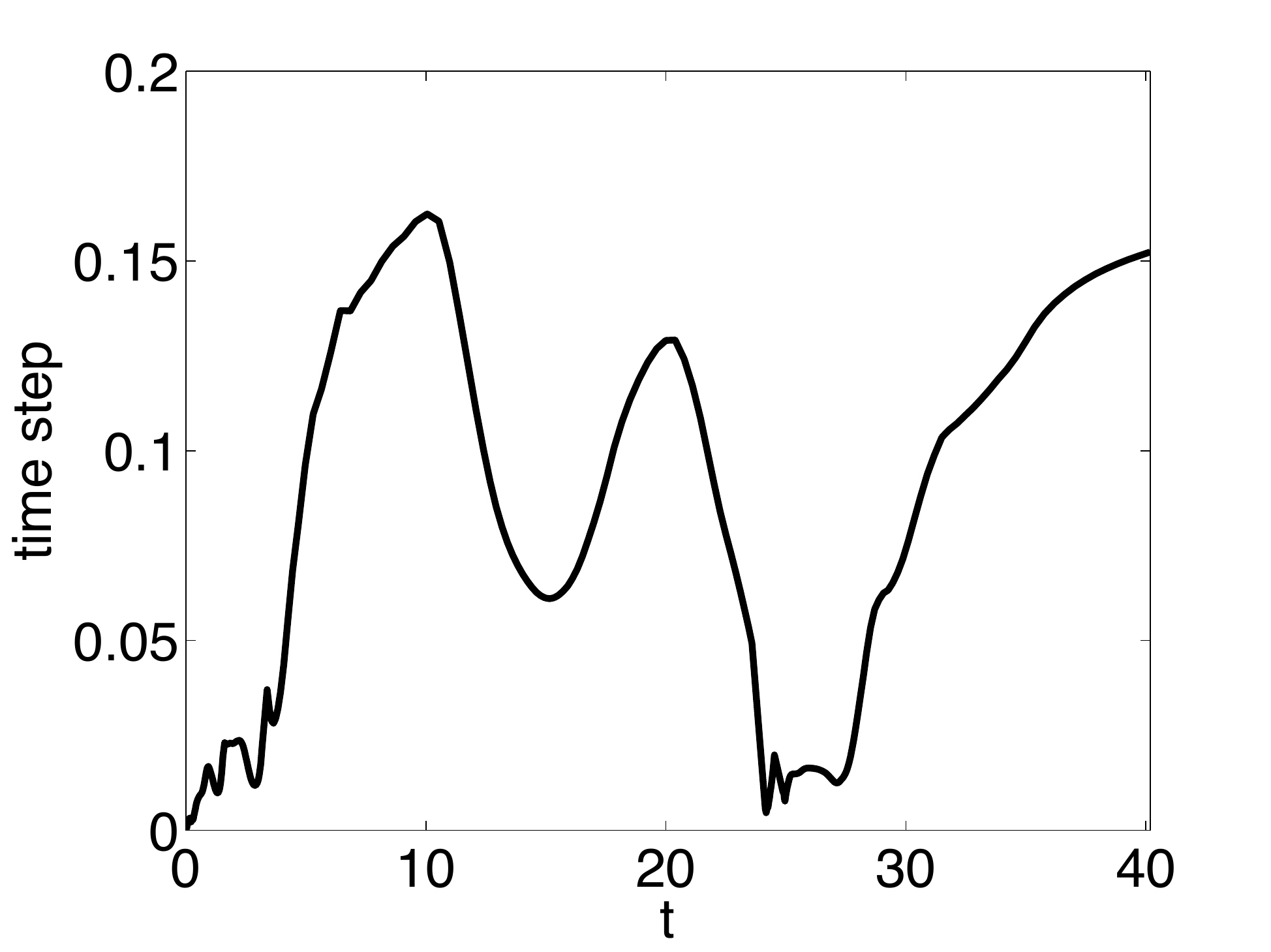}}
    \subfigure[Iteration count]{\label{fig:VCH2D_iteration}\includegraphics[width = 0.3\linewidth]{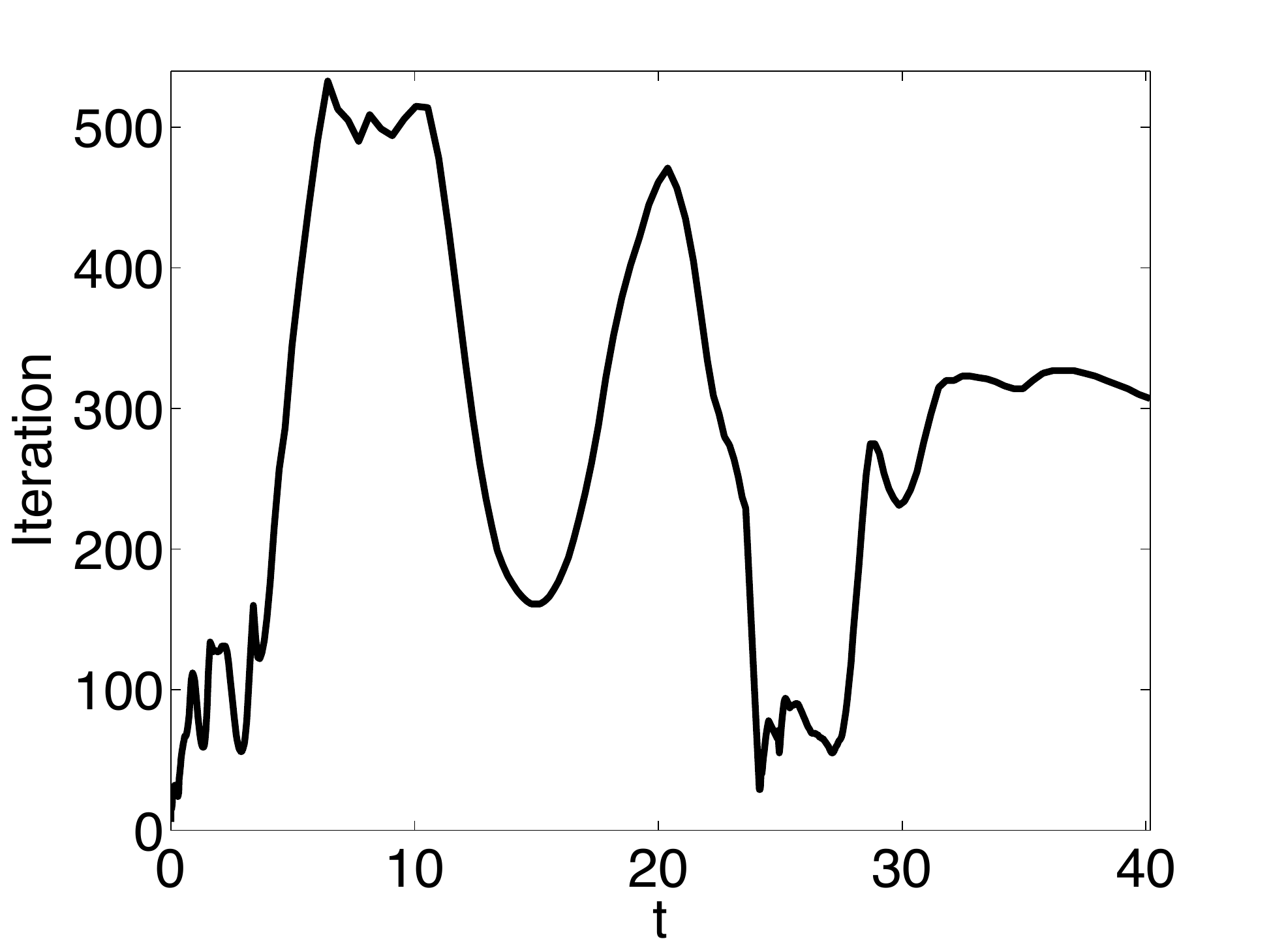}}
    \subfigure[Energy]{\label{fig:VCH2D_energy}\includegraphics[width = 0.3\linewidth]{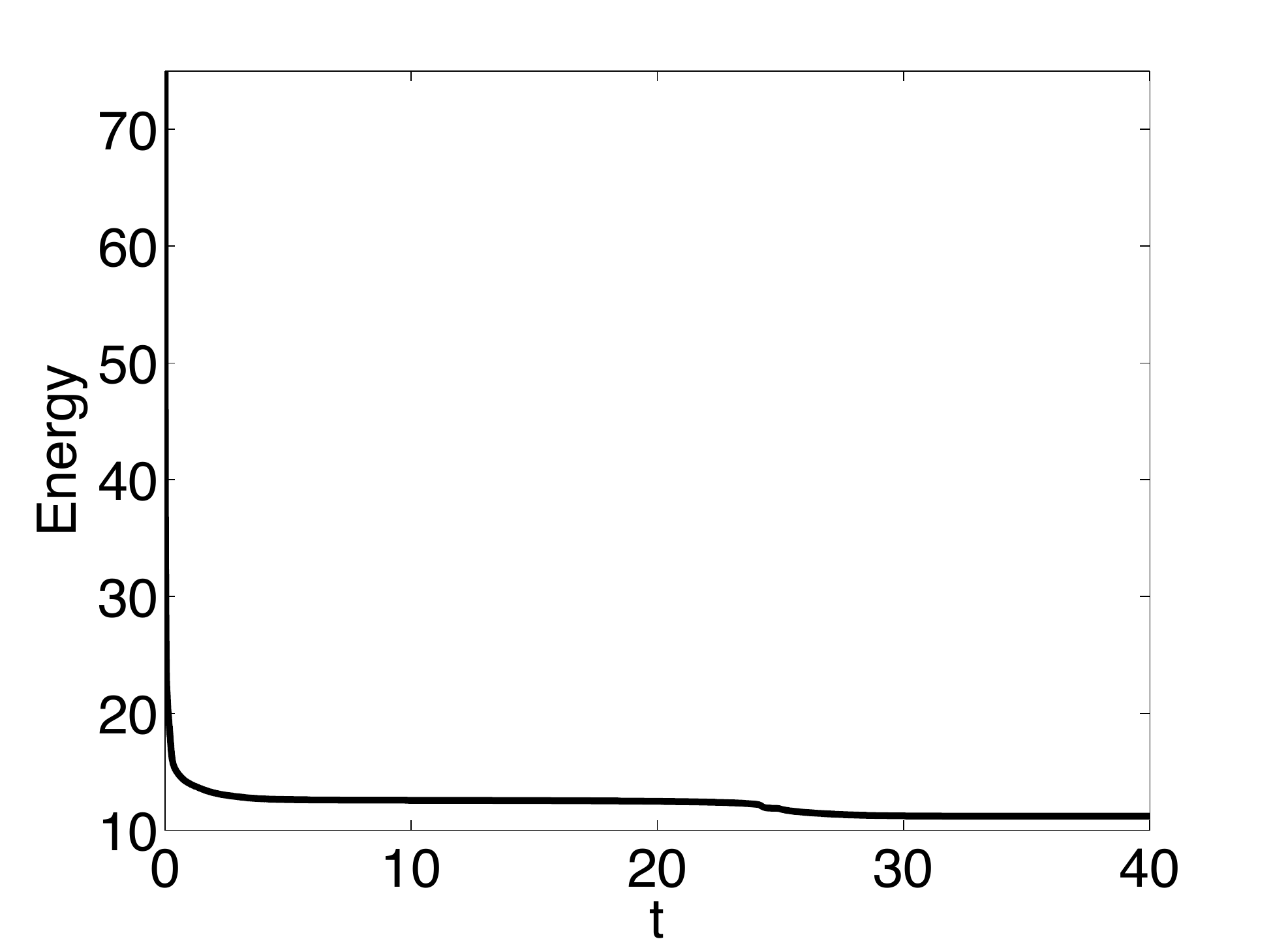}}
    \caption{The time step, iteration count, and energy history of VCH solution.}
    \label{fig:CH_2d_vector_history}
\end{center}
\end{figure}
The contour plots of  $\cos(\arg (u_1 + i u_2))$ are shown  in Figure \ref{fig:CH2D_vector}. Instead of plotting $u_1$ and $u_2$ separately, \cite{Jaylan} we define the angle $\theta \equiv \arg (u_1 + i u_2) $ at a triple junction, and plot $\cos(\theta)$ to avoid any discontinuities. We follow this benchmark plot using our numerical solution.
%============================================================================== Figure 9
\begin{figure}[h!]
	\begin{center}
		\centering
		\subfigure[$t=0$]      {\label{fig:VCH2D_u0}\includegraphics[width=0.3\linewidth]{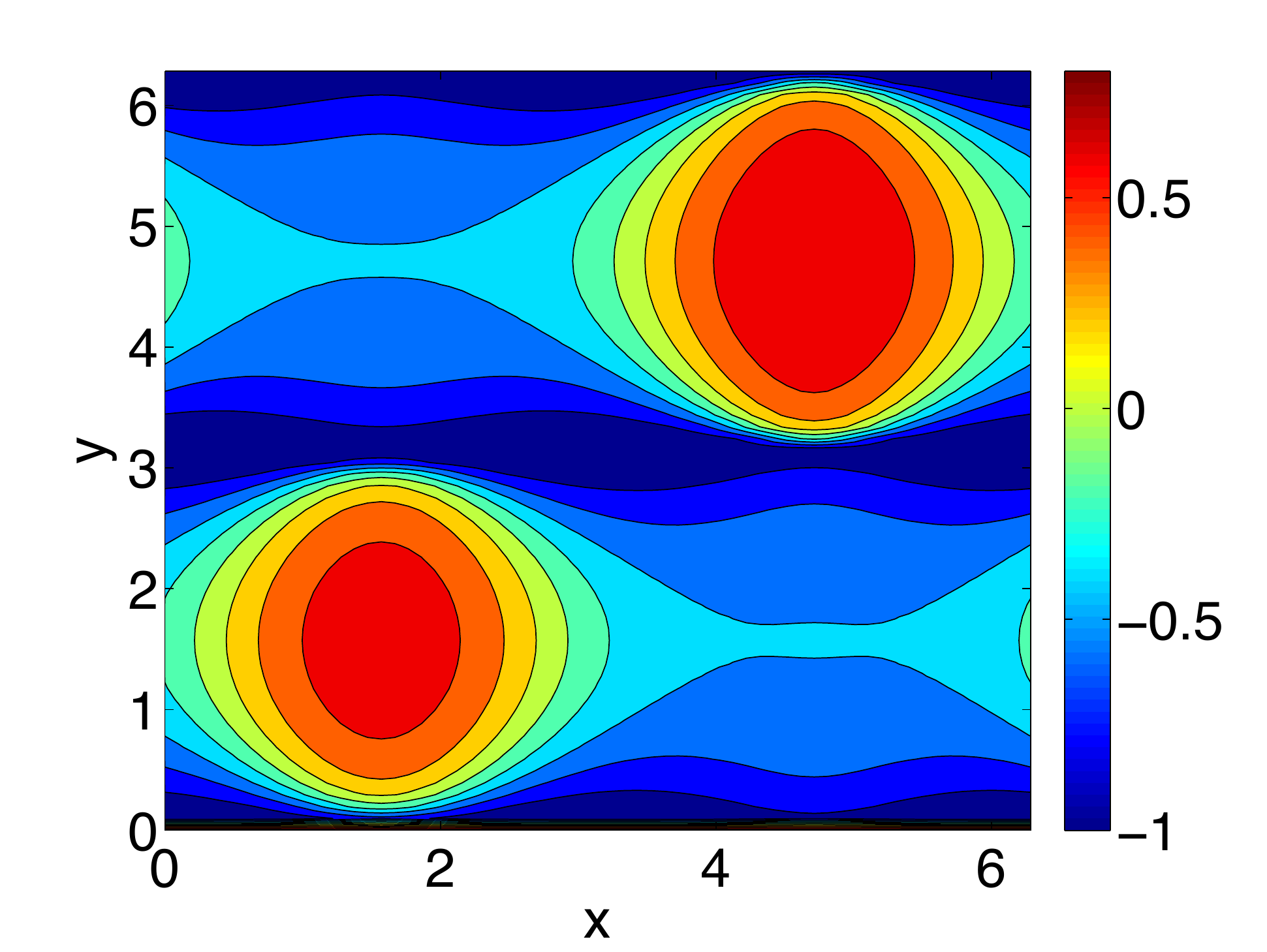}}
		\subfigure[$t=0.02503$]{\label{fig:VCH2D_u1}\includegraphics[width=0.3\linewidth]{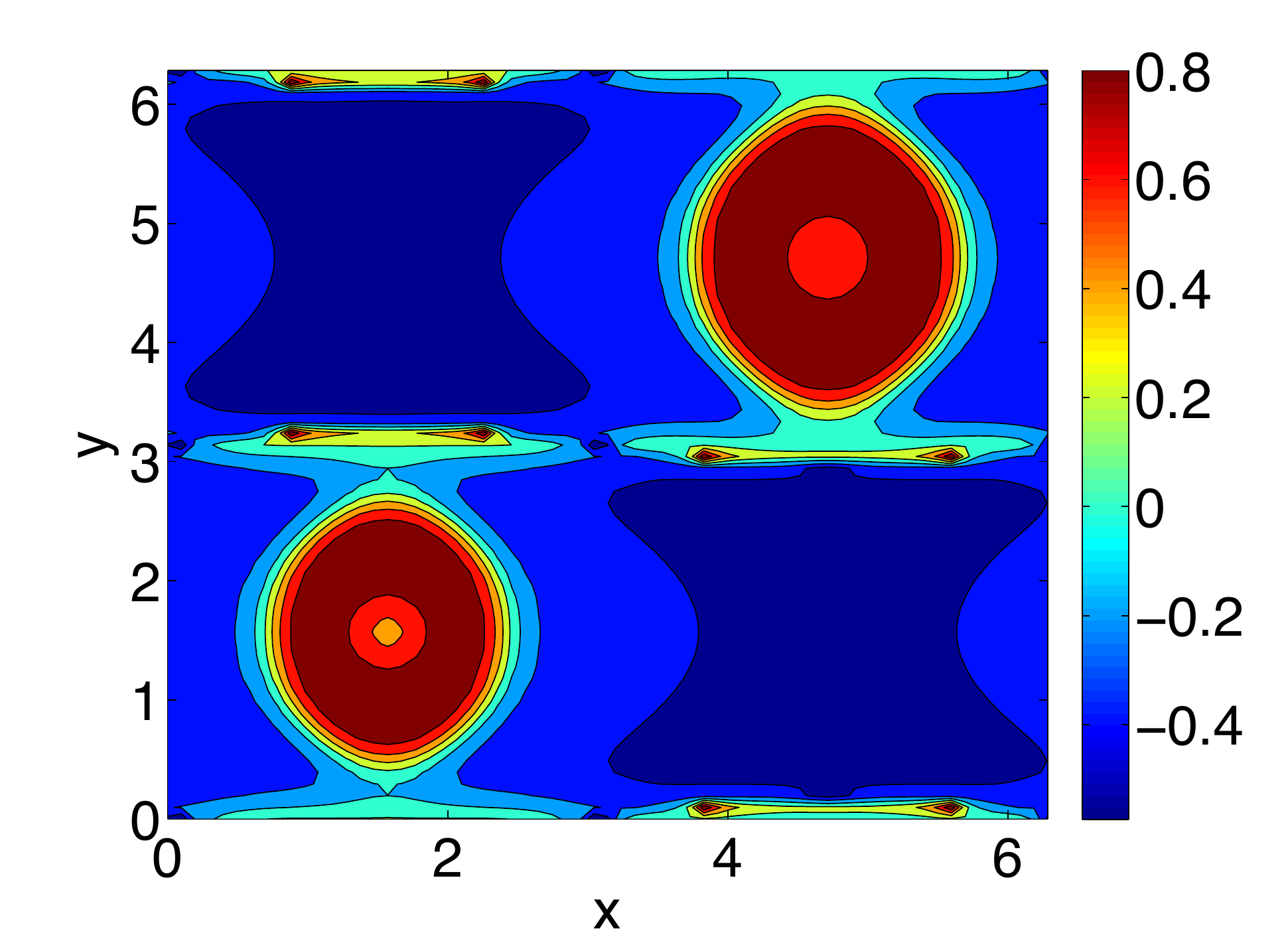}} 
		\subfigure[$t=0.5083$] {\label{fig:VCH2D_u2}\includegraphics[width=0.3\linewidth]{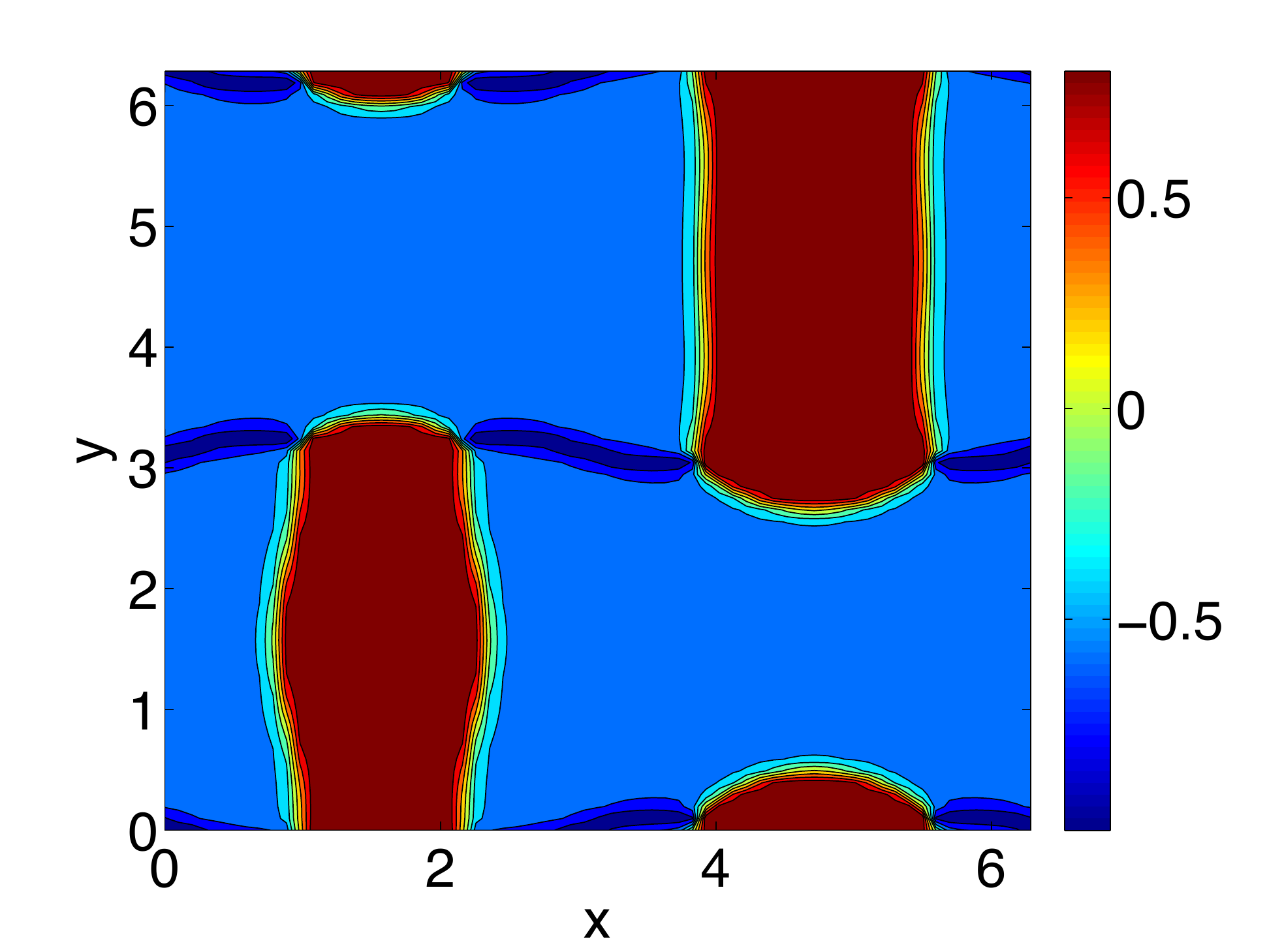}}\\
		\subfigure[$t=5.0848$] {\label{fig:VCH2D_u3}\includegraphics[width=0.3\linewidth]{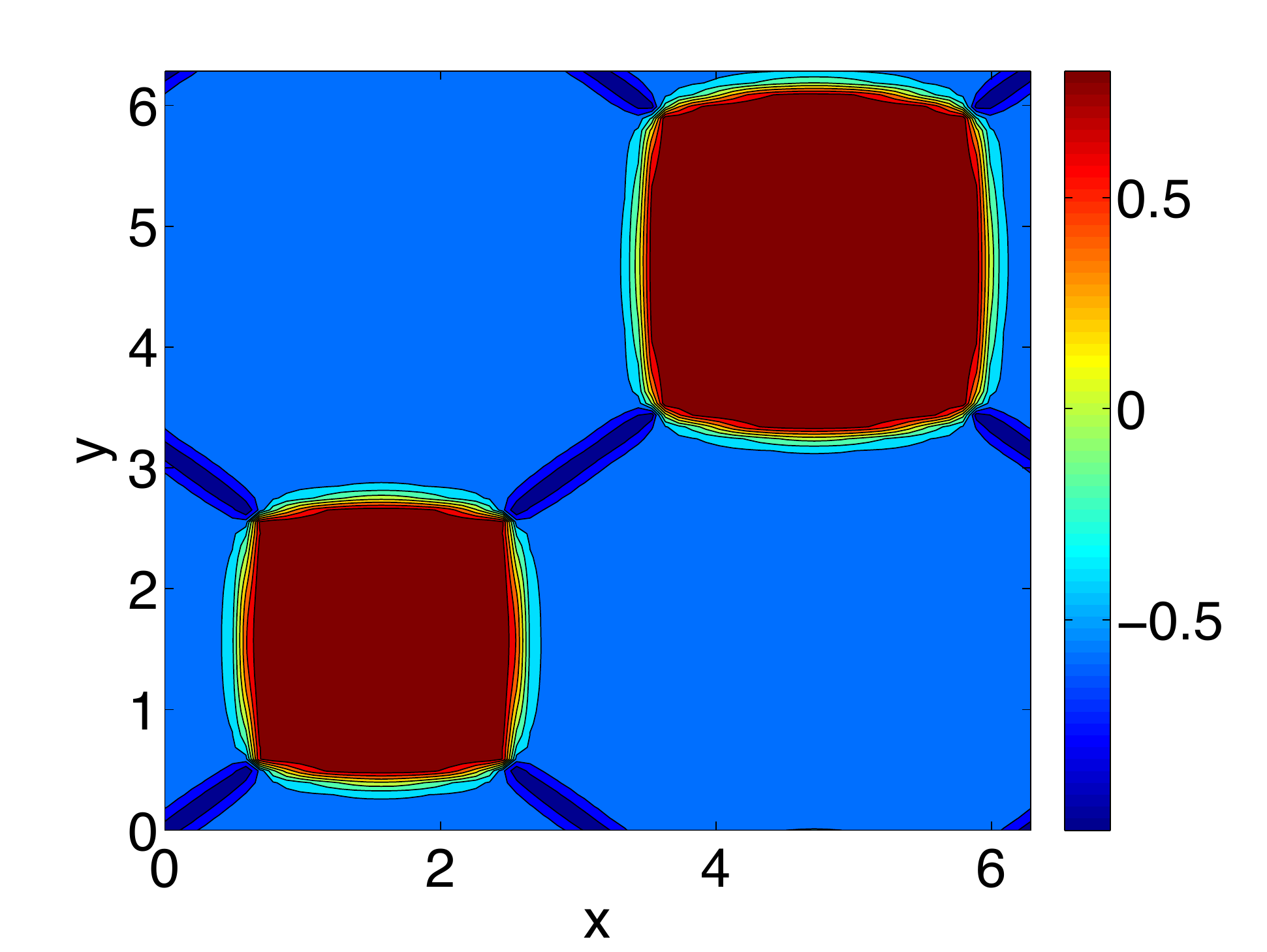}}
		\subfigure[$t=20.128$] {\label{fig:VCH2D_u4}\includegraphics[width=0.3\linewidth]{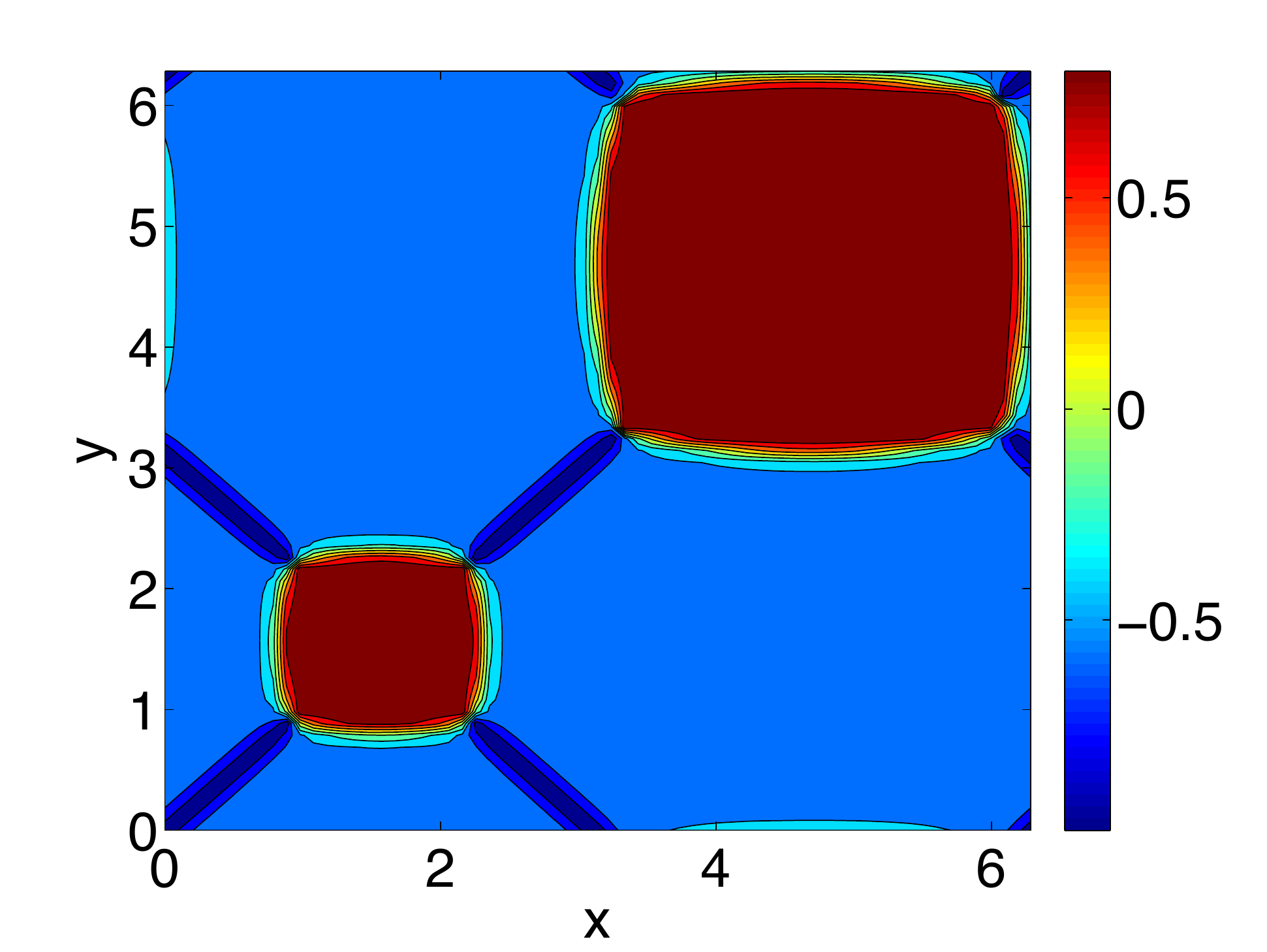}}
		\subfigure[$t=24.286$] {\label{fig:VCH2D_u5}\includegraphics[width=0.3\linewidth]{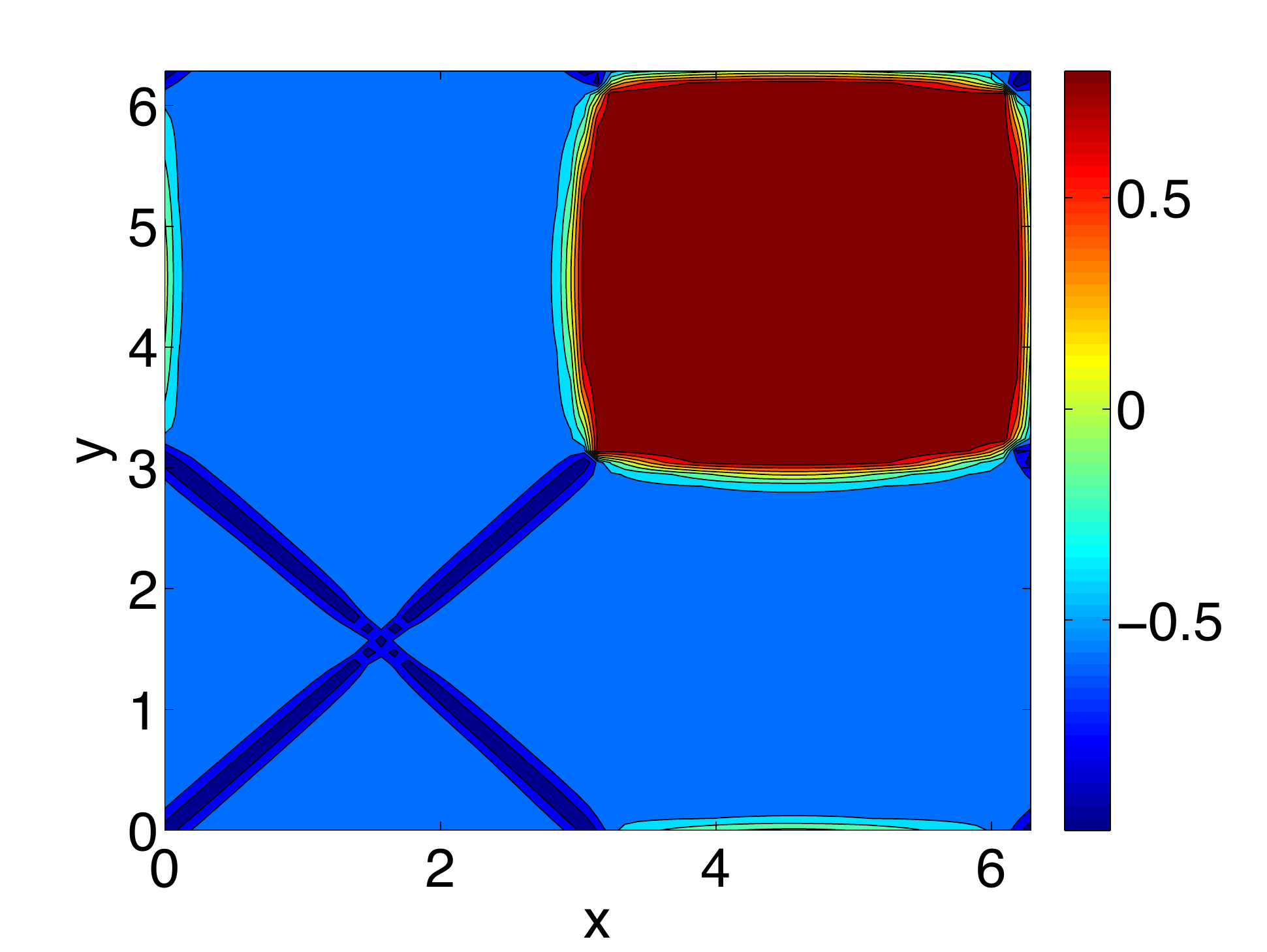}} \\
		\subfigure[$t=25.005$] {\label{fig:VCH2D_u6}\includegraphics[width=0.3\linewidth]{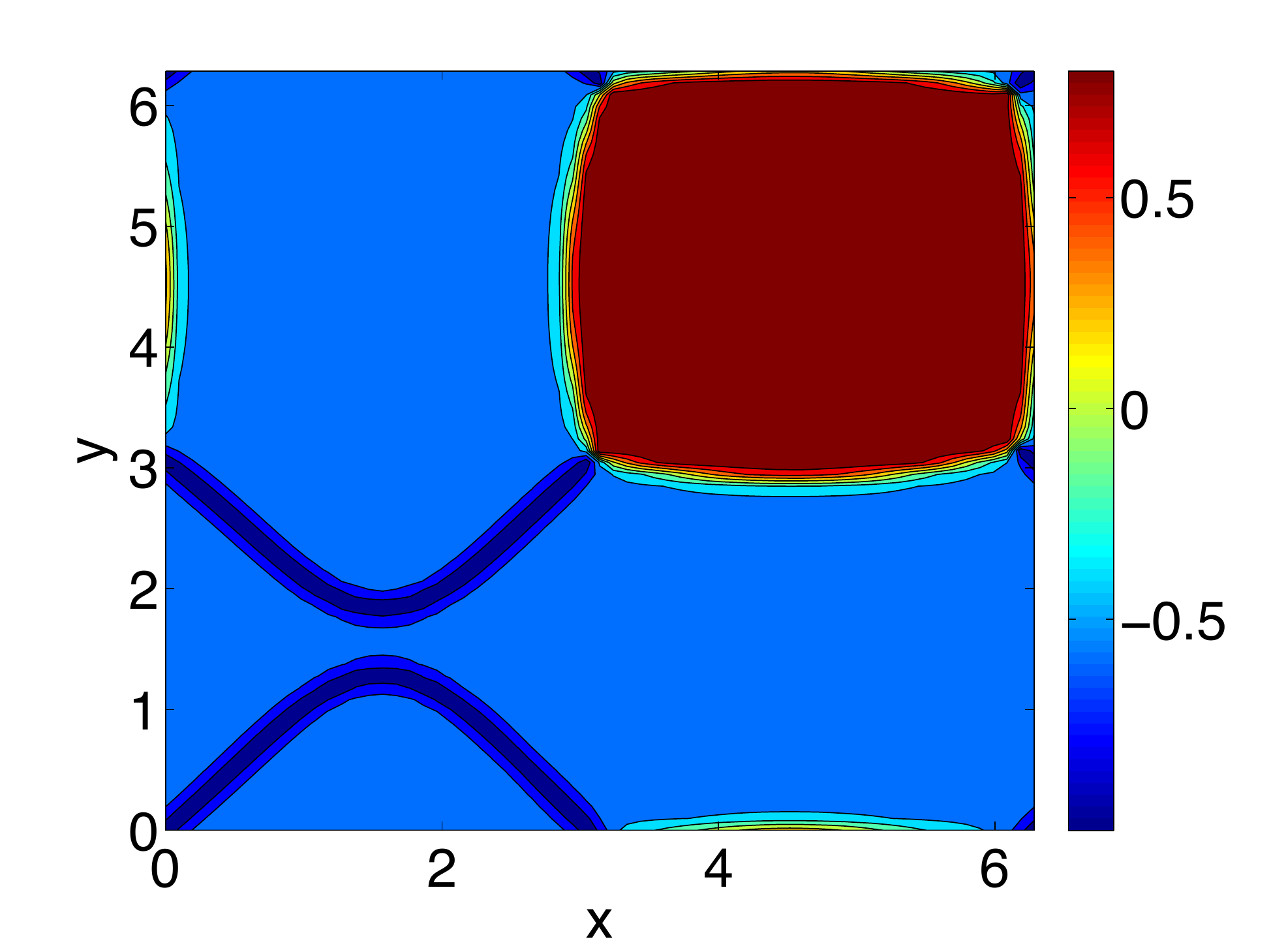}}
		\subfigure[$t=30.021$] {\label{fig:VCH2D_u7}\includegraphics[width=0.3\linewidth]{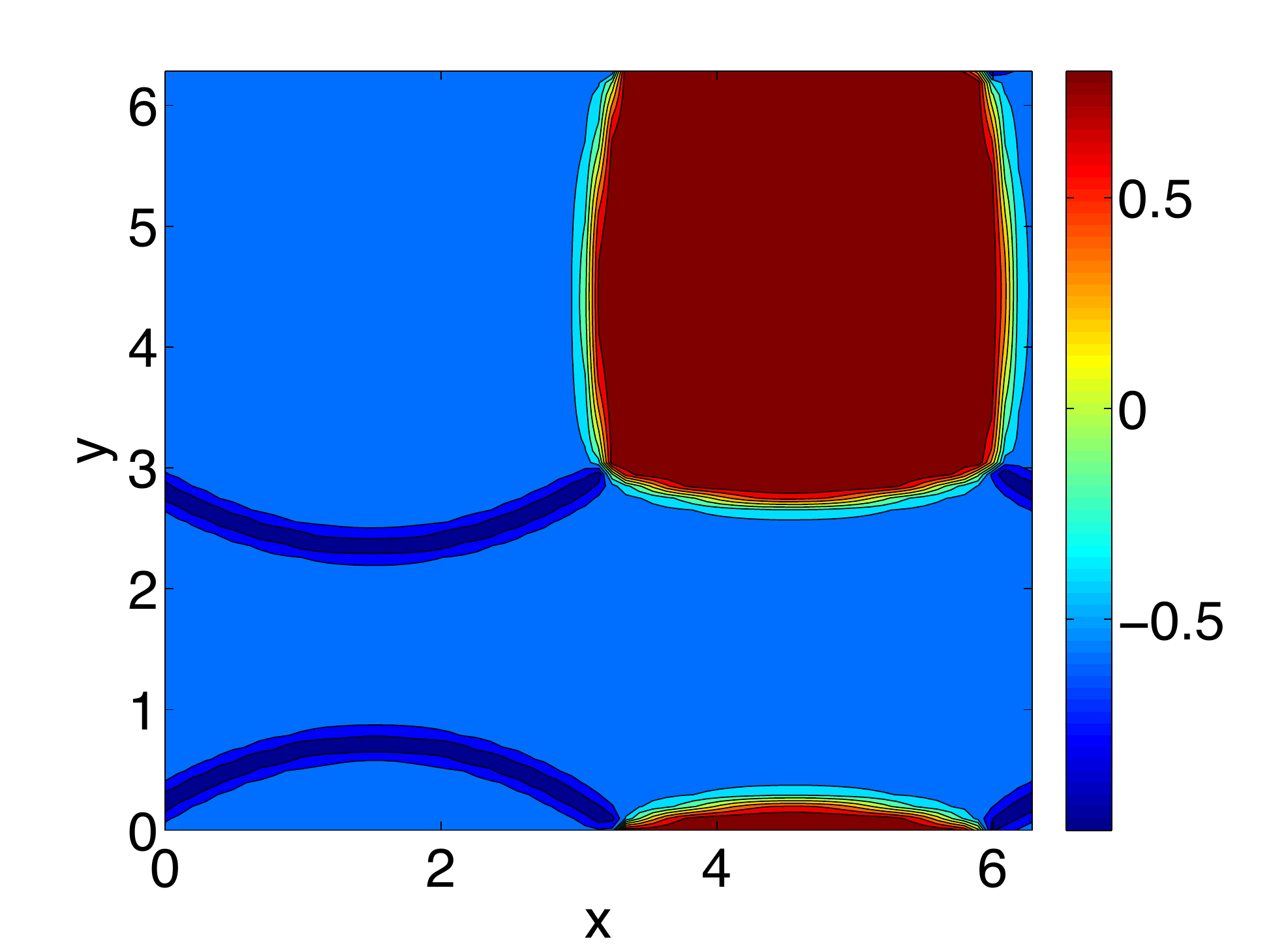}}
		\subfigure[$t=40.015$] {\label{fig:VCH2D_u8}\includegraphics[width=0.3\linewidth]{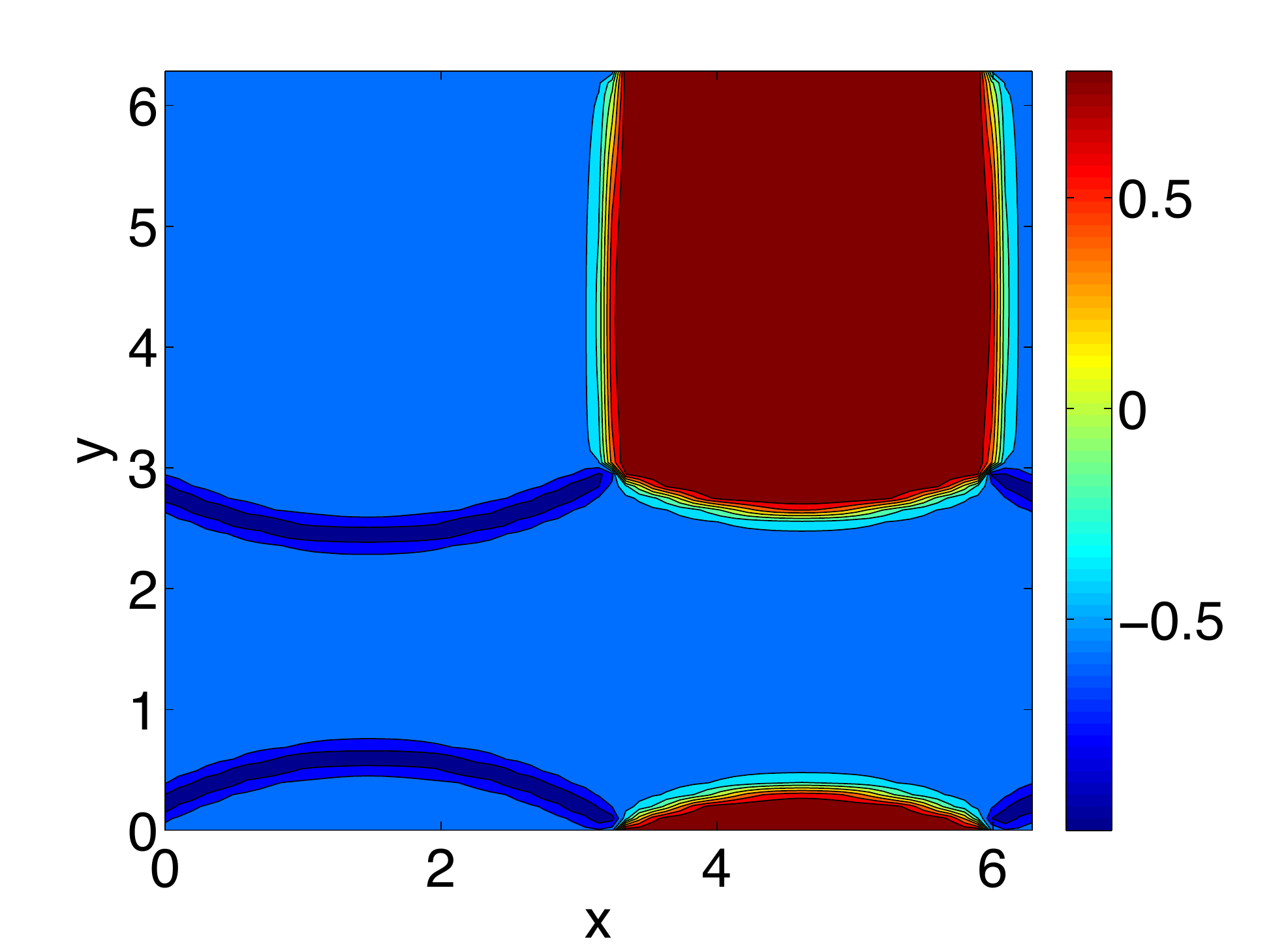}}
		\caption{Temporal evolution of the 2D CH vector solution with the initial \eqref{eqn:CH2D_initial} for $u_1$ and \eqref{eqn:CH2D_initial_v0} for $u_2$. Contours of $\cos(\arg u_1+iu_2)$ are plotted.}
		\label{fig:CH2D_vector}
	\end{center}
\end{figure}
After some initial ripening in Figure \ref{fig:VCH2D_u1}, the interfaces dividing the three states $\bf{u} = \bf{z}_i$ $(i=1,2,3)$ form around $T = 0.5$. In Figure \ref{fig:VCH2D_u2} - \ref{fig:VCH2D_u8}, two of the values have $cos(2\pi/3)$ (light blue in the plots) but separated by dark blue lines. Then the ripening process begins, and occurs over a long time scale, ending around $T = 24.286$ in Figure \ref{fig:VCH2D_u5}. Again, we remark that the volume is preserved over all time steps. For a more involved discussion of this simulation, we refer the interested reader to \cite{Jaylan}. Our goal here is to demonstrate that our MOL$^{\text{T}}$ scheme captures the correct ripening behavior. 
%\todo{Should replace "process" with a different word, such as ripening or coarsening?} 

%==============================================================================
%	** FCH model
%==============================================================================
\subsection{2D $6^{\text{th}}$ order model}
We also consider the $6^{\text{th}}$ order problem:
\begin{equation} \label{eqn:FCH0}
u_t = \Delta \left[ (\epsilon^2\Delta -f'(u) + \epsilon^2\eta)(\epsilon^2 \Delta u - f(u) )\right], \quad f(u) = u^3 - u.
\end{equation}
where $\epsilon$ and $\eta$ are given positive constants. This problem is motivated by the functionalized Cahn-Hilliard (FCH) equation \cite{kraitzman2015overview} which models interfacial energy in amphiphilic phase-separated mixtures. We follow a similar approach as before, and introduce the transformed variable $v=u+1$. Substitution into \eqref{eqn:FCH0} results in
\begin{equation} \label{eqn:FCH}
v_t = \Delta \left[ (\epsilon^2\Delta -f'(v) + \epsilon^2\eta)(\epsilon^2 \Delta v - f(v) )\right], \quad f(v) = v^3 - 3v^2 + 2v.
\end{equation}
We again apply the backward Euler (BE) scheme for time discretization of \eqref{eqn:FCH}, so that
%\begin{align} \notag
%\bigg[ I - \Delta t &(4-2\epsilon^2\eta)\Delta + \Delta t (4\epsilon^2-\eta\epsilon^4)\Delta^2 -\Delta t \epsilon^4\Delta^3 \bigg] v^{n+1} \\ \notag \vspace{4mm}
%&= v^n - \epsilon^2 \Delta t \Delta^2 \tilde{f}^{n+1} -\epsilon^2\Delta (\tilde{f}' \cdot \Delta v)^{n+1}+\Delta t \Delta(f \cdot f' - 4v)^{n+1}-\eta\epsilon^2\Delta t \Delta \tilde{f}^{n+1} .
%\end{align}
\begin{align} \notag
\left( I - \Delta t \epsilon^4\Delta^3 \right) v^{n+1} = v^n -\Delta t \Delta \left(\epsilon^2  \Delta f^{n+1} -\epsilon^2 (f' \Delta v)^{n+1}+(f  f')^{n+1}+\eta\epsilon^4\Delta { v}^{n+1}-\eta\epsilon^2  {f}^{n+1}\right)
\end{align}
Hence, we invert the $6^{\text{th}}$ order operator to solve $v^{n+1}$. One can invert this by completing the cube such that 
\begin{align} \notag
\left( I -  \sqrt[3]{\Delta t \epsilon^4}\Delta \right)^3 v^{n+1}  = v^n -&\Delta t \Delta \left(\epsilon^2  \Delta f^{n+1} -\epsilon^2 (f' \Delta v)^{n+1}+(f  f')^{n+1}+\eta\epsilon^4\Delta { v}^{n+1}-\eta\epsilon^2  {f}^{n+1}\right)\\ \label{eqn:FCH_BE} \vspace{4mm}
&- \left( 3\sqrt[3]{\Delta t \epsilon^4}\Delta - 3(\sqrt[3]{\Delta t \epsilon^4})^2\Delta^2 \right) v^{n+1}.
\end{align}
We can now solve $v^{n+1}$ by applying the triple inversion of our modified Helmholtz operator $\mathcal{L} =  I -  \sqrt[3]{\Delta t \epsilon^4}\Delta$ by the same procedure in Section \ref{sec:CH_2D}.
%============================================================================== Figure 10
\begin{figure}[h!]
\begin{center}
\centering
    \subfigure[$u(x,y,0)$]{\label{fig:FCH2D_u0}\includegraphics[width=0.3\linewidth]{CH2D_U0}}
    \subfigure[$u(x,y,2)$]{\label{fig:FCH2D_u1}\includegraphics[width=0.3\linewidth]{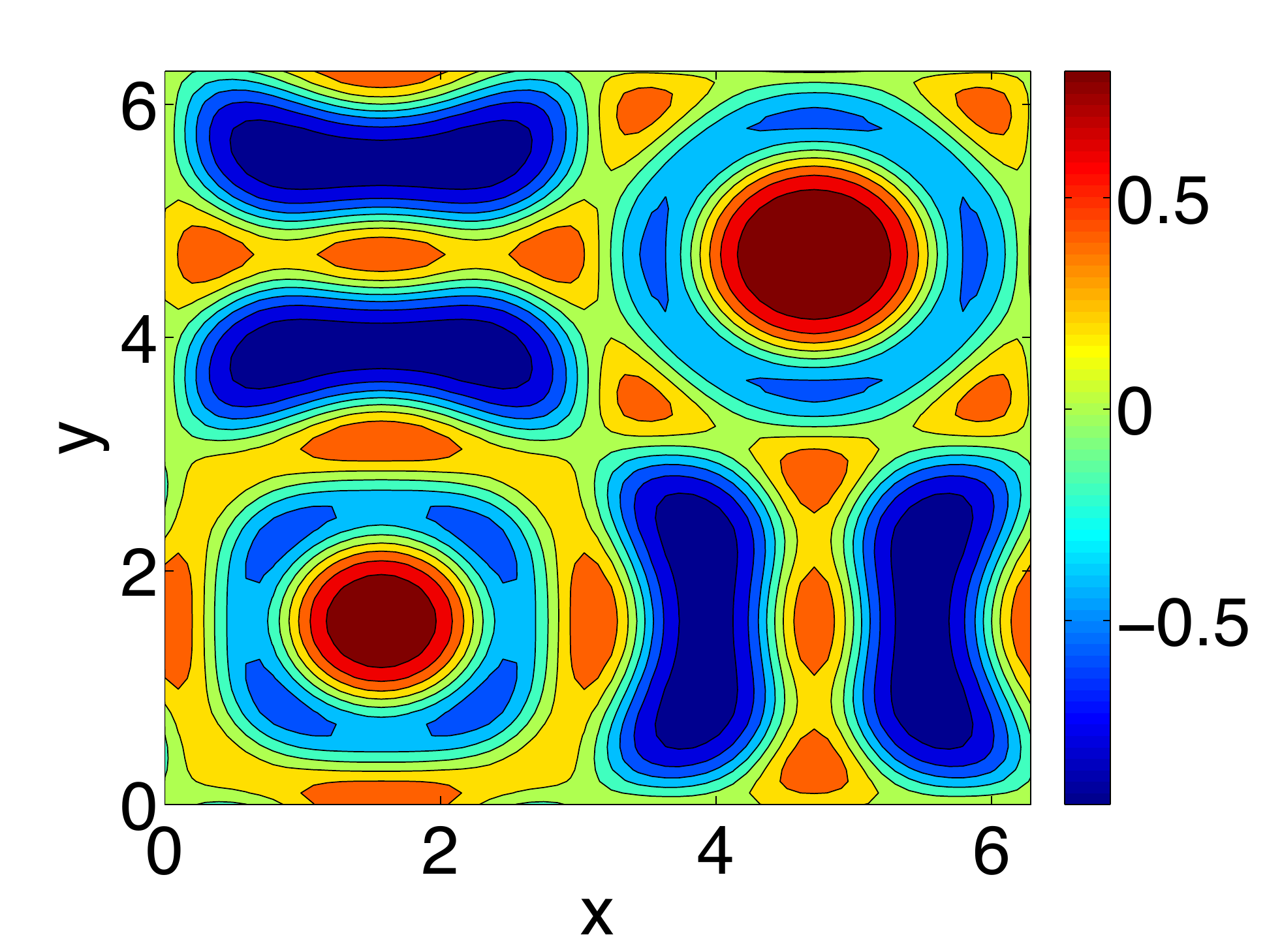}}
    \subfigure[$u(x,y,50)$]{\label{fig:FCH2D_u2}\includegraphics[width=0.3\linewidth]{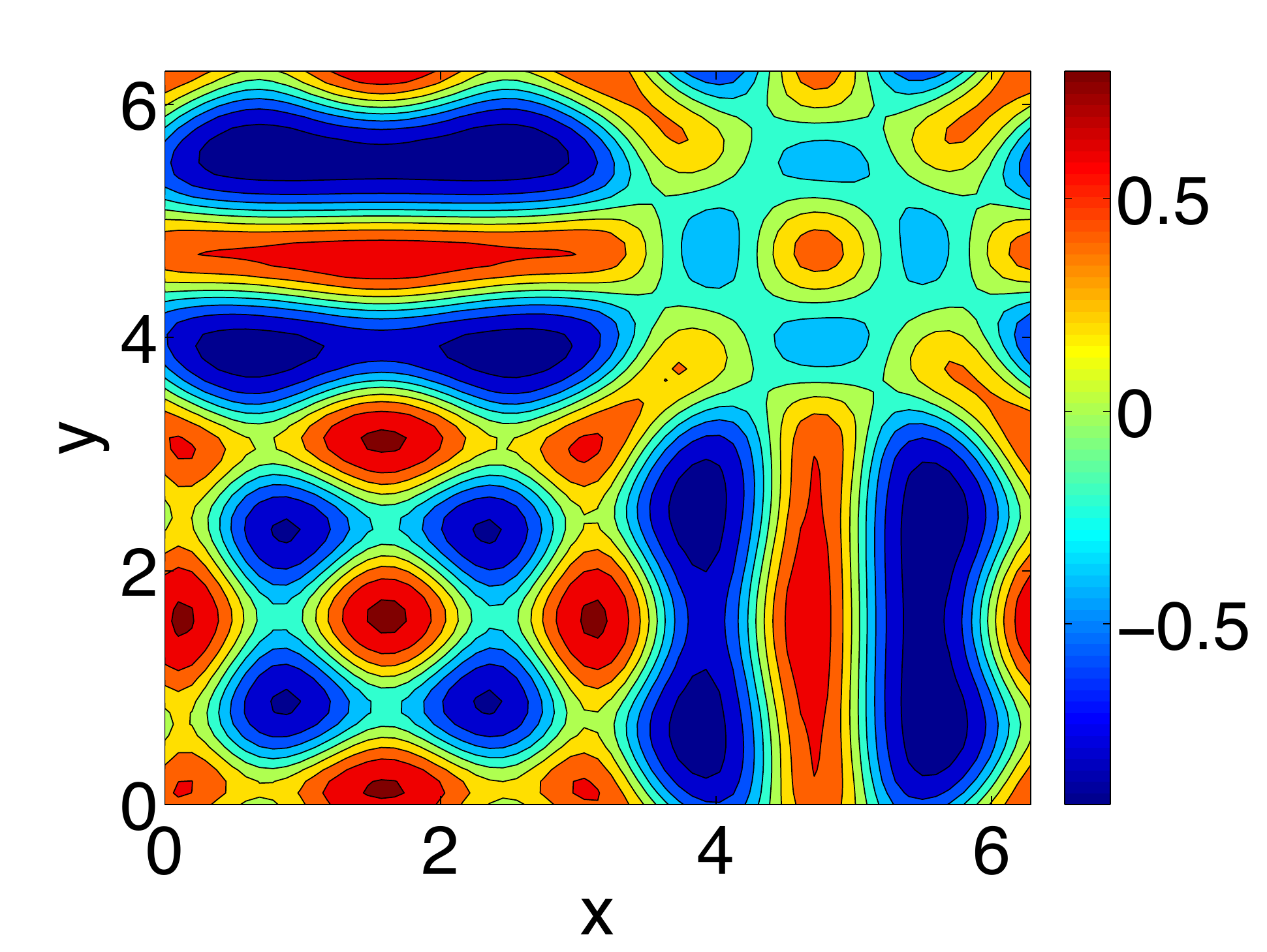}}\\
    \subfigure[$u(x,y,150)$]{\label{fig:FCH2D_u3}\includegraphics[width=0.3\linewidth]{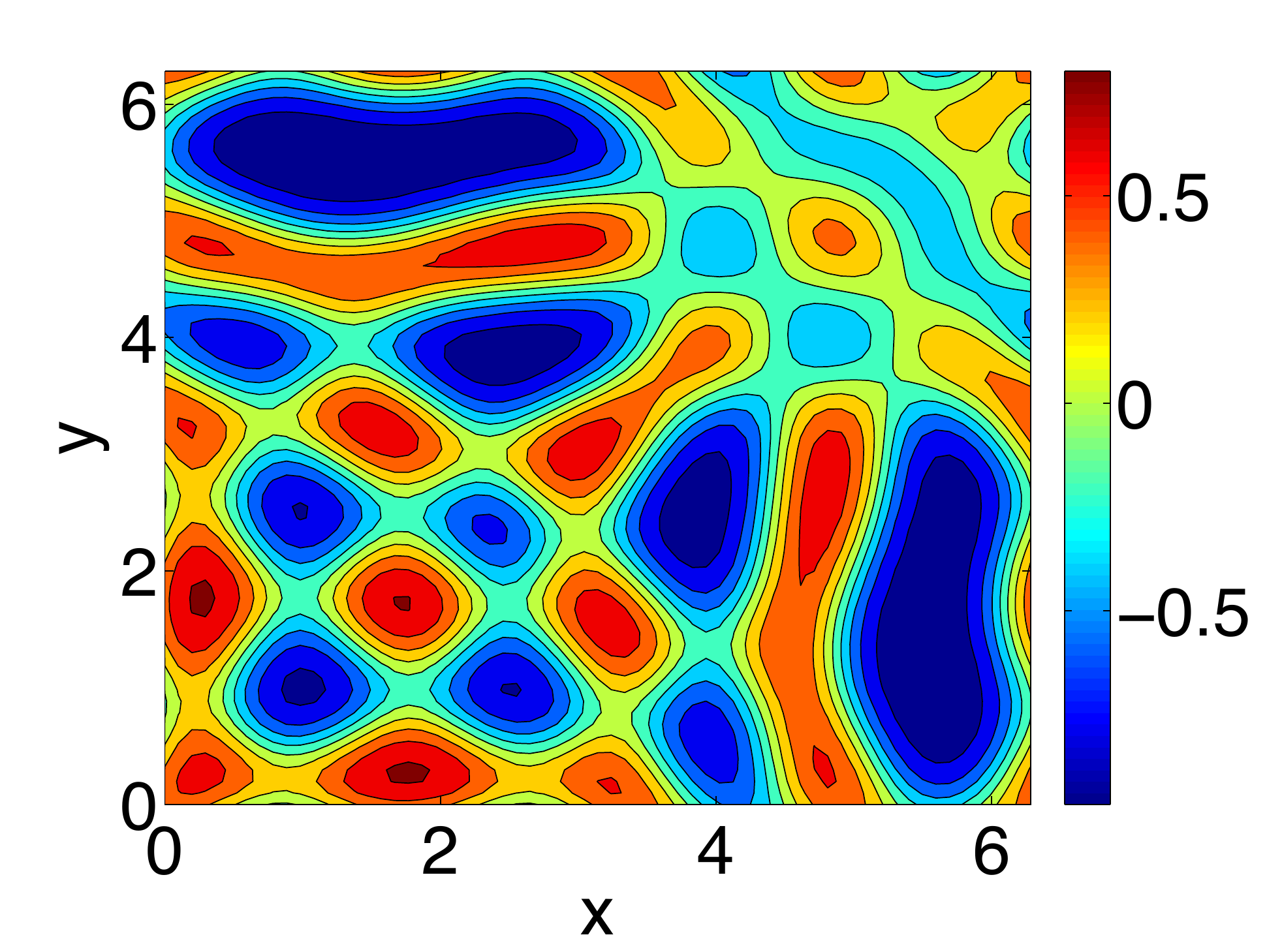}}
    \subfigure[$u(x,y,300)$]{\label{fig:FCH2D_u4}\includegraphics[width=0.3\linewidth]{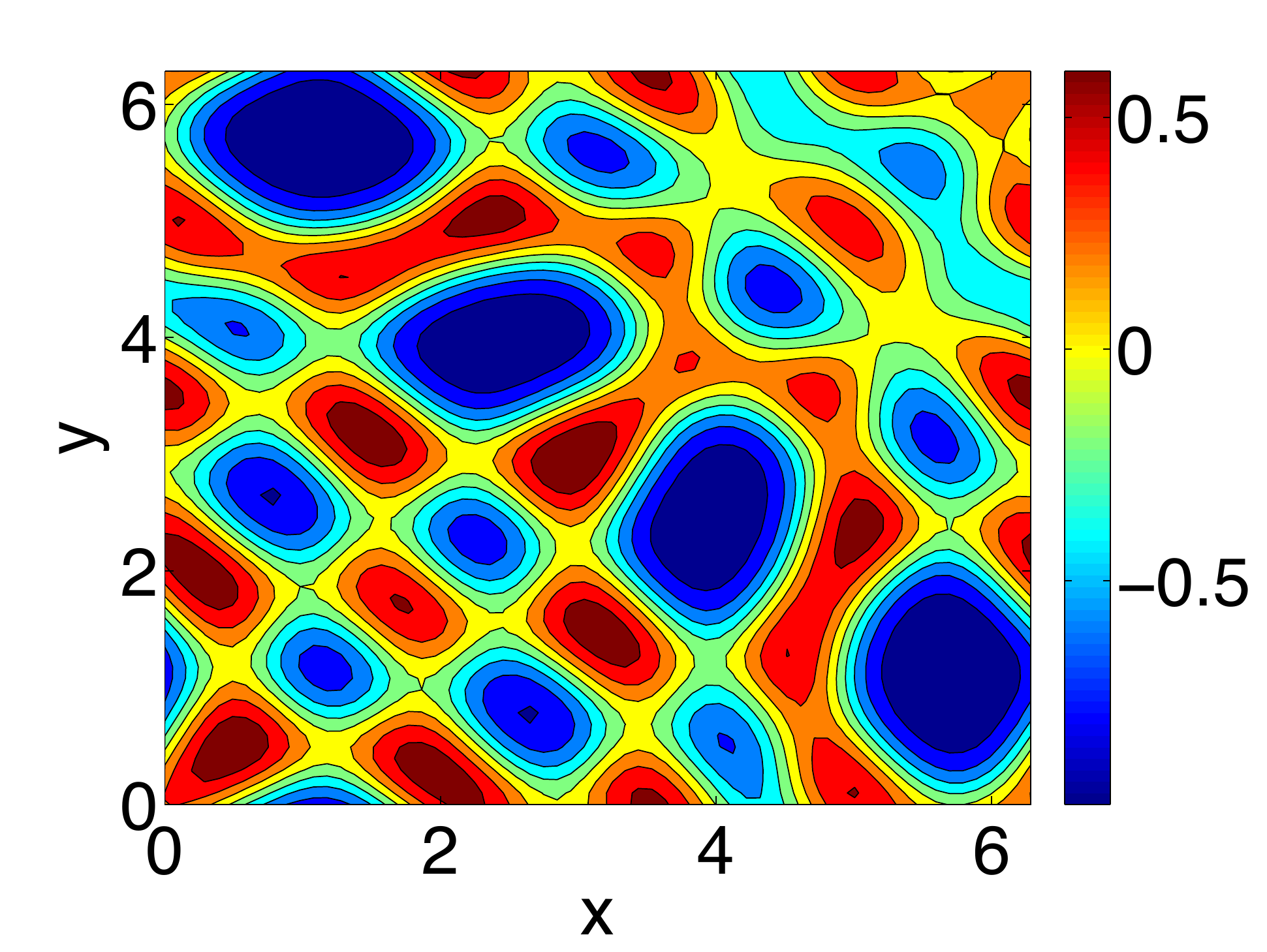}}
    \subfigure[$u(x,y,500)$]{\label{fig:FCH2D_uT}\includegraphics[width=0.3\linewidth]{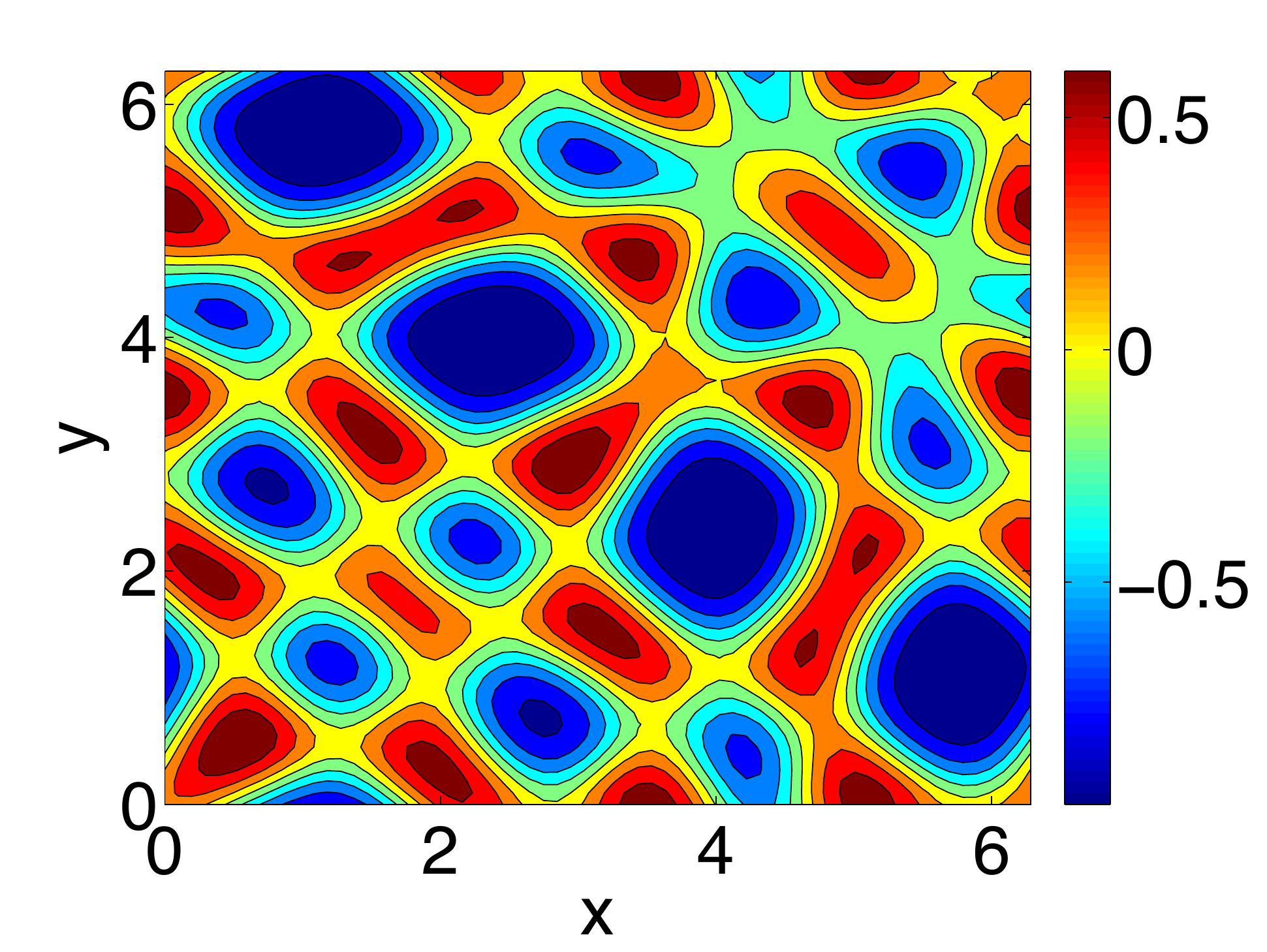}} 
 \caption{Temporal evolution of the 2D FCH equation with initial condition \eqref{eqn:CH2D_initial}.}
    \label{fig:FCH2D}
\end{center}
\end{figure}

For simplicity, we apply the first-order BE scheme \eqref{eqn:FCH_BE} with a fixed time step $\Delta t = 0.1$ to solve the $6^{\text{th}}$ order problem \eqref{eqn:FCH} $(\eta =1)$. By starting with the same initial condition \eqref{eqn:CH2D_initial} $(\epsilon = 0.18)$, and with the following parameters:
\[ \Delta x = \Delta y = \frac{2\pi}{128} \approx 0.05, \quad N_{\max\text{it}} = 200, \quad N_{\text{tol}} = 10^{-6}.\]
The contour plots of our numerical solutions of \eqref{eqn:FCH0} are shown in Figure \ref{fig:FCH2D}. As expected \cite{Jaylan}, we can confirm that the final state becomes a regular array in Figure \ref{fig:FCH2D_uT}. 

\section{Conclusion}
In this paper we have illustrated how the method of lines transpose (MOL$^T$) can be used to solve nonlinear phase field models, particularly the Cahn Hilliard, vector Cahn-Hilliard, and functionalized Cahn-Hilliard equations. We utilize a novel factorization of the semi-discrete equation to ensure gradient stability, permitting large time steps. When combined with time adaptivity, we are able to resolve rapid events such as spinodal evolution, which occur after long meta stable states.

The spatial solver is matrix free, $O(N)$, and logically Cartesian, and the splitting error is directly incorporated into the nonlinear fixed point iterations. We have considered higher order time stepping, such as backward difference formulas, implicit Runge-Kutta, and spectral deferred correction methods. Of all possible configurations, the time adaptive backward Euler-backward difference 2 (BE-BDF2) method is the most efficient in terms of accuracy and time to solution.
%\newpage 
%==============================================================================
% BIBLIOGRAPHY STUFF
%==============================================================================
\bibliographystyle{abbrv}
\bibliography{MOLT_CH}

\end{document}